\theoremstyle{theorem}
\newtheorem{thm}{Theorem}[section]
\newtheorem{lemma}{Lemma}[section]
\newtheorem{fact}{Fact}
\theoremstyle{definition}
\newtheorem{claim}{Claim}
\newtheorem{conj}{Conjecture}
\begin{document}
\title{\bf  A Strengthening of Erd\H{o}s-Gallai Theorem
and Proof of Woodall's Conjecture}
\date{}

\author{Binlong Li\footnote{School of Mathematics and Statistics, Northwestern Polytechnical University, Xi'an, 710129, China, and Xi'an-Budapest Joint Research Center for Combinatorics, Northwestern Polytechnical University, Xi'an, Shaanxi 710129, China. Partially supported by NSFC (No. 11601429) and the Fundamental Research Funds for the Central Universities (3102019ghjd003).}~~~Bo Ning\footnote{School of Mathematics, Tianjin University, Tianjin, 300072, P.R. China.}
\footnote{Corresponding author. The current address: College of Computer Science,
Nankai University, Tianjin, 300071, P.R. China.
E-mail: bo.ning@nankai.edu.cn (B. Ning). Partially supported by NSFC (No. 11971346).}} \maketitle

\begin{center}
\begin{minipage}{140mm}
\small\noindent{\bf Abstract:} For a 2-connected graph $G$
on $n$ vertices and two vertices $x,y\in V(G)$, we prove that
there is an $(x,y)$-path of length at least $k$, if there are
at least $\frac{n-1}{2}$ vertices in $V(G)\backslash \{x,y\}$
of degree at least $k$. This strengthens a celebrated theorem due
to Erd\H{o}s and Gallai in 1959. As the first application of
this result, we show that a 2-connected graph with $n$
vertices contains a cycle of length at least $2k$, if it
has at least $\frac{n}{2}+k$ vertices of degree at least
$k$. This confirms a 1975 conjecture made by Woodall.
As other applications, we obtain some results which
generalize previous theorems of Dirac, Erd\H{o}s-Gallai,
Bondy, and Fujisawa et al., present short proofs of the
path case of Loebl-Koml\'{o}s-S\'{o}s Conjecture which
was verified by Bazgan et al. and a conjecture of
Bondy on longest cycles (for large graphs) which was
confirmed by Fraisse and Fournier, and make progress on
a conjecture of Bermond.

\smallskip
\noindent{\bf Keywords:} Long cycle; Erd\H{o}s-Gallai Theorem;
Woodall's conjecture; Fan Lemma
\end{minipage}
\end{center}
\section{Introduction}
For a graph $G$ and $x,y \in V(G)$, an $(x,y)$-path of
$G$ is a path with two end-vertices $x$ and $y$.
The \emph{length} of a path is the number of edges in it. The
study on the longest $(x,y)$-paths has a long history.
The famous Erd\H{o}s-Gallai Theorem \cite{EG59} asserts
that for any positive integer $k$ and two distinct vertices $x,y$ in
a 2-connected graph $G$, if every vertex other than $x,y$ has
degree at least $k$, then there is an $(x,y)$-path of length
at least $k$ in $G$. For a graph $G$ and $v\in V(G)$, the
{\em neighborhood} $N_G(v)$ of $v$ in $G$ is the set of vertices which
are adjacent to $v$. The {\em degree} of $v$ in $G$, denoted
by $d_G(v)$, equals $|N_G(v)|$. For a graph $G$ and $x,y \in V(G)$,
let $n_k(x,y)=|\{z\in V(G)\setminus\{x,y\}: d_G(z) \geq k\}|$.
By Erd\H{o}s-Gallai Theorem, if $G$ is 2-connected
and $n_k(x,y)=n-2$, then $G$ contains an $(x,y)$-path of
length at least $k$. Bondy and Jackson \cite{BJ85} showed
that a weaker condition that $n_k(x,y) \geq n-3$, $n\geq 4$, suffices.
(A slightly weaker result can be found in Alon \cite[Lemma~2.3]{A86}.)
It is natural to ask what is the best lower bound on
$n_k(x,y)$ which still ensures  the same conclusion.
We use a novel method to prove the following.
\begin{thm}\label{Thm:strengthening-ErdosGallai}
Let $G$ be a 2-connected graph on $n$ vertices and
$x,y\in V(G)$. If there are at least $\frac{n-1}{2}$ vertices
in $V(G)\backslash \{x,y\}$ of degree at least $k$, then
$G$ contains an $(x,y)$-path of length at least $k$.
\end{thm}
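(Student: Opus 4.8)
\emph{Setup.} The plan is to argue by contradiction. Suppose $G$ has no $(x,y)$-path of length at least $k$, and let $P = v_0 v_1 \cdots v_\ell$ with $v_0 = x$, $v_\ell = y$ be a \emph{longest} $(x,y)$-path; thus $\ell \le k-1$. Call a vertex \emph{heavy} if its degree is at least $k$, and let $R = V(G) \setminus V(P)$. My goal is to show that the number of heavy vertices in $V(G) \setminus \{x,y\}$ is strictly less than $\frac{n-1}{2}$, contradicting the hypothesis. A useful first observation is that the case $R = \emptyset$ is trivial: then $n = \ell + 1 \le k$, so every vertex has degree at most $n - 1 < k$ and there are no heavy vertices at all. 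Hence I may assume $R \neq \emptyset$. A second observation is that any heavy vertex $v_i$ on $P$ has $d_G(v_i) \ge k > \ell$ while only $\ell$ other vertices lie on $P$, so every heavy vertex on $P$ has at least one neighbour in $R$; likewise every heavy vertex in $R$ must send edges into $R$. Thus all the ``heavy mass'' interacts with $R$, and the analysis must be driven by the structure of $G - V(P)$.

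\emph{Structural tools.} The two facts I would extract from $2$-connectivity, via the Fan Lemma, concern a component $C$ of $G[R]$ and its attachment set $A = N(C) \cap V(P)$. First, $|A| \ge 2$, since a single attachment would be a cut vertex separating $C$ from $\{x,y\} \subseteq V(P)$. Second, writing $A = \{v_{i_1}, \dots, v_{i_t}\}$ with $i_1 < \cdots < i_t$, no two attachments are consecutive on $P$, i.e.\ $i_{s+1} \ge i_s + 2$: otherwise, picking $w, w' \in C$ with $v_{i_s} \sim w$, $v_{i_{s+1}} \sim w'$ and a $w$--$w'$ path through the connected set $C$, I could replace the edge $v_{i_s} v_{i_{s+1}}$ of $P$ by this detour and obtain a strictly longer $(x,y)$-path. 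The same rerouting gives the governing \emph{detour principle}: if for two attachments $v_a, v_b$ (with $a<b$) there is a $(v_a,v_b)$-path whose interior lies in $C$ and whose length exceeds $b - a$, then splicing it into $P$ beats $P$ --- impossible. Since $b - a \le \ell \le k - 1$, it suffices to produce \emph{any} interior-in-$C$ path of length at least $k$ between two attachments to reach a contradiction.

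\emph{Recursion and counting.} This detour principle is exactly an Erd\H{o}s--Gallai obstruction inside $C$, so I would bound the heavy vertices of each component by induction on $n$ (the statement being proved). For a component $C$ with attachment set $A$, form the auxiliary graph $C^{+} = G[C \cup A]$; crucially, every $z \in C$ has $N_G(z) \subseteq C \cup A$, so $d_{C^{+}}(z) = d_G(z)$ and a vertex of $C$ heavy in $G$ is still heavy in $C^{+}$. Applying the theorem to $C^{+}$ (on fewer vertices) with two attachments as the prescribed endpoints would yield, whenever $C$ contains at least $\tfrac{|C^{+}|-1}{2}$ heavy vertices, a long path between those endpoints through $C$; by the detour principle this contradicts the maximality of $P$. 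Hence each component $C$ carries fewer than $\tfrac{|C| + |A| - 1}{2}$ heavy vertices. Summing these bounds over all components of $G[R]$, together with a separate bound on the heavy vertices lying on $P$, I would aim to show that the total stays below $\tfrac{n-1}{2}$.

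\emph{Main obstacle.} The delicate points, and where I expect the real work to be, are three. First, the auxiliary graph $C^{+}$ need not be $2$-connected with the chosen endpoints, so the induction must instead be applied to the $2$-connected configuration furnished by two internally disjoint $C$--$P$ paths from the Fan Lemma (or to $C^{+}$ with a temporary edge added between the endpoints, while forbidding that edge in the resulting path). Second, attachment vertices lie on $P$ and may be shared by several components, so the per-component counts must be combined without double-charging the vertices of $A$; balancing this bookkeeping against the on-path heavy vertices is what ties the final constant to exactly $\tfrac{n-1}{2}$. Third, and most substantially, bounding the heavy vertices \emph{on} $P$ cannot be done by a naive ``spread-out'' argument, because consecutive heavy path-vertices are permitted provided their outside neighbours lie in distinct components; controlling this requires charging each on-path heavy vertex to a private non-heavy witness (for instance a vertex of the segment it governs or of the component it reaches), and it is the simultaneous success of this charging together with the component recursion that I expect to be the crux of the proof.
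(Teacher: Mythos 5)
Your proposal is a strategy outline rather than a proof, and the strategy as described cannot close. The decisive gap is the counting step. Take a longest $(x,y)$-path $P$ of length $\ell\le k-1$ and let $R=V(G)\setminus V(P)$, so $|R|=n-\ell-1$. Even granting your per-component bound that each component $C$ of $G[R]$ with attachment set $A$ carries fewer than $\frac{|C|+|A|-1}{2}$ heavy vertices, summing over components already yields roughly $\frac{n-\ell-1}{2}$ plus a nonnegative surplus from the attachment sets, and you have no bound on the heavy vertices of $P$ better than $\ell-1$. The total is then at least about $\frac{n-\ell-1}{2}+(\ell-1)=\frac{n+\ell-3}{2}$, which is $\ge\frac{n-1}{2}$ as soon as $\ell\ge 2$; so no contradiction is reached for $\ell\ge 3$ even in the most optimistic accounting. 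To rescue the scheme you would need to prove that at most about half of the internal vertices of $P$ are heavy, which in the extremal configuration (the $t$ copies of $K_{(k-1)/2}\vee\overline{K_{(k-1)/2}}$ attached to $x,y$) is exactly tight and is itself essentially an instance of the theorem restricted to the span of $P$; you acknowledge this is ``the crux'' but supply no mechanism for the charging. A second, independent defect is the inductive step itself: in $C^{+}=G[C\cup A]$ a long path between two chosen attachments may pass through \emph{other} vertices of $A$, which lie on $P$, so it cannot be spliced into $P$ as a detour; and if you delete the other attachments to prevent this, the degrees of vertices of $C$ drop and heavy vertices need not remain heavy. (The paper faces exactly this tension in its Fan Lemma, Theorem \ref{Thm:Woodall-FanLemma}, and resolves it by adding two \emph{new} vertices joined to carefully chosen neighbourhoods inside the component so that degrees are preserved and the resulting path has all internal vertices in the component.)

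For contrast, the paper does not argue from a longest path at all. It takes a counterexample minimizing $k$, then $|G|$, then $|E(G)\setminus\{xy\}|$, and maximizing the degree sequence, and uses the Kelmans edge-switching operation $G[u\rightarrow v]$ together with a series of structural claims (every non-feasible vertex has a clique neighbourhood, $G-x$ and $G-y$ are $2$-connected, every non-feasible vertex has two common neighbours with each of $x$ and $y$, etc.) to pin down the block structure of $G-\{x,x'\}$ and derive a contradiction. That machinery is what makes the sharp threshold $\frac{n-1}{2}$ attainable; your longest-path decomposition, as it stands, does not.
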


We shall present an example which shows that the condition in
Theorem \ref{Thm:strengthening-ErdosGallai} is best
possible. In order to describe the example, we need
to introduce three operations on graphs here. Let $G_1$
and $G_2$ be two vertex-disjoint graphs. The
\emph{(disjoint) union} of $G_1$ and $G_2$, denoted
by $G_1+G_2$, is a new graph with vertex set
$V(G_1)\cup V(G_2)$ and edge set $E(G_1)\cup E(G_2)$.
Let $G_1\vee G_2$ denote $G_1+G_2$ together with all
edges from $V(G_1)$ to $V(G_2)$. We use $\overline{G}$
to denote the \emph{complement} of $G$.

Let $k\geq 5$ be an odd integer and $t$ be a positive
integer. We construct a graph $G$ as follows: starting
from $t$ disjoint copies of
$K_{\frac{k-1}{2}}\vee \overline{K_{\frac{k-1}{2}}}$,
we add two new vertices $x$ and $y$ followed by adding
all edges between $x,y$ and vertices in each copy of
$K_{\frac{k-1}{2}}$. Notice that there are
exactly $\frac{|G|-2}{2}$ vertices other than
$x,y$ of degree at least $k$,
and each longest $(x,y)$-path in $G$ is of length $k-1$.

As the first application of Theorem
\ref{Thm:strengthening-ErdosGallai}, we confirm the
following long-standing conjecture by Woodall \cite{W75},
which improves the famous Dirac's theorem \cite{D52}
in very strong sense. For a graph $G$,
the {\it circumference} $c(G)$
is the length of a longest cycle in $G$.

\begin{conj}[Woodall \cite{W75}]\label{Conj:Woodall}
Let $G$ be a 2-connected graph on $n$ vertices.
If there are at least $\frac{n}{2}+k$ vertices
of degree at least $k$, then $c(G)\geq 2k$.
\end{conj}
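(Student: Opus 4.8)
The plan is to realize the desired cycle as the union of two internally disjoint $(x,y)$-paths, each of length at least $k$. Note first that Theorem~\ref{Thm:strengthening-ErdosGallai} already hands us long paths for \emph{free}: writing $S=\{v:d_G(v)\ge k\}$, the hypothesis gives $|S|\ge \frac n2+k$, so for \emph{every} pair $x,y$ we have $n_k(x,y)=|S\setminus\{x,y\}|\ge |S|-2\ge \frac n2+k-2\ge \frac{n-1}{2}$ as soon as $k\ge 2$ (the case $k\le 1$ being trivial), and hence an $(x,y)$-path of length at least $k$. The entire difficulty is therefore to make two such paths disjoint. I would organize this around a longest cycle $C$, of length $c:=c(G)$, and argue by contradiction, assuming $c\le 2k-1$.

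The argument then splits according to whether $S$ meets the outside of $C$. Suppose first that every vertex of $S$ lies on $C$. Then $c=|V(C)|\ge|S|\ge\frac n2+k$; combined with $c\le 2k-1$ this forces $n\le 2k-2$, whereas $|S|\le n$ forces $n\ge 2k$, a contradiction. So we may assume there is a vertex $v\in S$ lying off $C$, and it remains to derive a contradiction from this single off-cycle high-degree vertex. Let $H$ be the component of $G-V(C)$ containing $v$ and let $A$ be its set of attachments on $C$; by $2$-connectivity $|A|\ge 2$, and this is precisely where the Fan Lemma enters, furnishing two disjoint $H$-to-$C$ connections. The key structural point is that every $u\in V(H)$ satisfies $N_G(u)\subseteq V(H)\cup A$, so its degree is \emph{unchanged} in the auxiliary graph $J:=G[V(H)\cup A]+w$, where $w$ is a new vertex joined to all of $A$. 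I would apply Theorem~\ref{Thm:strengthening-ErdosGallai} inside $J$ to produce a long path whose trace in $G$ is an \emph{ear} $P$ with both endpoints on $C$ and interior in $H$, of length at least $k$. Once such an ear is in hand, the finish is standard surgery: writing $C=C_1\cup C_2$ for the two arcs of $C$ between the endpoints of $P$, both $C_1\cup P$ and $C_2\cup P$ are cycles, so maximality of $C$ gives $|C_i|+|P|\le c$ for $i=1,2$; adding these and using $|C_1|+|C_2|=c$ yields $c\ge 2|P|\ge 2k$, contradicting $c\le 2k-1$.

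The main obstacle is verifying the hypothesis of Theorem~\ref{Thm:strengthening-ErdosGallai} for $J$: I must certify that at least $\frac{|V(J)|-1}{2}$ of its vertices (other than the two chosen endpoints) have degree at least $k$ in $J$, and that the path produced is a genuine ear rather than one whose interior revisits $C$. For the count, the degree-preservation of $H$-vertices is decisive, and I would combine it with the global scarcity of low-degree vertices, $|V(G)\setminus S|\le\frac n2-k$, together with control on $|A|$ and on the number of low-degree vertices inside $H$. A useful companion ingredient is the elementary observation that a vertex of $S$ having two neighbours consecutive along $C$, or having at least $k$ neighbours on $C$, already produces a cycle longer than $C$; this confines the hard case to components $H$ that are genuinely large and path-rich, exactly the regime in which Theorem~\ref{Thm:strengthening-ErdosGallai} is effective. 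Balancing $|A|$, $|V(H)|$ and $|V(H)\setminus S|$ so that the density threshold is met, and choosing the attachment pair so that the ear stays clean, is the delicate technical heart of the proof.
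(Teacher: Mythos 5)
There is a genuine gap, and it sits exactly where you locate the ``delicate technical heart.'' Your case split is: either all of $S$ lies on $C$ (fine), or some single vertex $v\in S$ lies in a component $H$ of $G-C$, and you then hope to verify the hypothesis of Theorem~\ref{Thm:strengthening-ErdosGallai} for the auxiliary graph $J$ built on $V(H)\cup A$. But the existence of one high-degree vertex in $H$ gives essentially no control over the density of high-degree vertices \emph{inside} $H$: the global bound $|V(G)\setminus S|\le\frac n2-k$ does not localize to any particular component, and $H$ may consist of one vertex of $S$ together with many low-degree vertices, in which case $J$ fails the $\frac{|V(J)|-1}{2}$ threshold and no ear of length $k$ need exist. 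The fix is not a balancing act but an averaging dichotomy over \emph{all} components: either some component $H_i$ contains at least $\frac{|H_i|+1}{2}$ vertices of degree at least $k$ in $G$ (and only then do you run the auxiliary-graph argument inside that $H_i$), or every component has at most $\frac{|H_i|}{2}$ such vertices, whence the total number of high-degree vertices is at most $|C|+\sum_i\frac{|H_i|}{2}=\frac{n+|C|}{2}$, and the hypothesis $\frac{n+|C|}{2}\ge\frac n2+k$ forces $|C|\ge 2k$ with no further work. This is how the paper proceeds, and it removes the need to ``combine global scarcity with control on $|A|$'' entirely.

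A second, smaller gap: you reduce to producing a single ear $P$ of length at least $k$, but this is not always achievable even when $H$ is dense in high-degree vertices. If the maximum number $t$ of attachments of a single vertex of $H$ on $C$ is large, the degrees inside $J$ (or the paper's graph $H'$) drop by up to $t-2$, and Theorem~\ref{Thm:strengthening-ErdosGallai} only yields a path of length about $k-t+2$; the missing $t-2$ edges must be recovered as a \emph{star} of extra legs at one endpoint, i.e., one must work with a genuine $(H,C)$-fan with $t\ge 2$ legs totalling at least $k$ edges. Your two-arc surgery is the $t=2$ case of the standard fan inequality (sum over consecutive termini of the arc lengths, each at least the sum of the two adjacent legs, giving $c\ge 2\sum_i|P_i|\ge 2k$); you need the general version (Lemma~\ref{Lem:wellknown}) to close the argument.
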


This conjecture was listed as one of 50 unsolved
problems in the textbook by Bondy and Murty
(see \cite[Problem~7,~Appendix~IV]{BM76}).
It has attracted wide attention since then. In 1985,
H\"{a}ggkvist and Jackson \cite{HJ85} showed the
conclusion in this conjecture holds if the graph
$G$ satisfies either of the following conditions:
(a) $G$ has at most $3k-2$ vertices and at least
$2k$ vertices of degree at least $k$; or (b) $G$
has $n\geq3k-2$ vertices and at least
$n-\frac{k-1}{2}$ vertices of degree at least $k$.
Li and Li \cite{LL} verified the conjecture for
the case where $n \leq 4k-6$. If $G$ is 3-connected,
H\"{a}ggkvist and Li (unpublished, see \cite{L02})
confirmed the conjecture for $k\geq 25$. If we do
not assume any further conditions, in 2002,
Li \cite{L02} showed $c(G)\geq 2k-13$ based
on the concept of $(k,B)$-connectivity and vines
of paths (see \cite{T66,B71,L82}). It was remarked
that ``in \cite{L}, our complete proof of the
conjecture for $k\geq 683$ was much longer"
\footnote{We quote this sentence from \cite{L02}.}.
For other related results, we refer interested
readers to a survey (see \cite[Section~4]{L13}).
To authors' best  knowledge, a complete proof of
Woodall's conjecture for all $k\geq 2$ is still open.

In this paper, we resolve Woodall's conjecture completely.

\begin{thm}\label{Thm:WoodallConj}
Let $G$ be a 2-connected graph on $n$ vertices.
If there are at least $\frac{n}{2}+k$ vertices
of degree at least $k$, then $c(G)\geq 2k$.
\end{thm}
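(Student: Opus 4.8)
The plan is to argue by contradiction and reduce the cycle problem to the path statement of Theorem~\ref{Thm:strengthening-ErdosGallai}. Suppose $c(G)\le 2k-1$ and let $C$ be a longest cycle, with $|C|=c$. Since $G$ has at least $\frac n2+k$ vertices we have $n\ge 2k>c$, so $C$ is not Hamiltonian and there is at least one vertex off $C$; the small or degenerate cases (where $k$ is tiny, or $G$ is a cycle) I would dispose of separately. The guiding idea is that a longest cycle cannot admit an \emph{ear} (a path with both ends on $C$ and interior off $C$) longer than the shorter of the two arcs it cuts off: if $x,y\in V(C)$ and $P$ is an $(x,y)$-path internally disjoint from $C$, then replacing the shorter $x,y$-arc by $P$ would otherwise give a longer cycle, so $|P|\le\lfloor c/2\rfloor\le k-1$. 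Thus it suffices to produce, somewhere off $C$, an ear of length at least $k$.

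First I would run the standard crossing argument to control the neighbourhoods of off-cycle vertices. If some vertex $u\notin V(C)$ had at least $k$ neighbours on $C$, then no two of them could be consecutive on $C$ (otherwise $u$ could be inserted to lengthen $C$), and counting the forced gaps gives $c\ge 2k$, a contradiction. Hence every off-cycle vertex has fewer than $k$ neighbours on $C$, so every off-cycle vertex of degree at least $k$ has a neighbour off $C$. The decisive bookkeeping point is that the hypothesis together with $c\le 2k-1$ forces the off-cycle part to be dense in high-degree vertices: at most $c$ of the at least $\frac n2+k$ high-degree vertices lie on $C$, so at least $\frac n2+k-c$ of them lie off $C$, while $|V(G)\setminus V(C)|=n-c$; the inequality $\frac n2+k-c\ge\frac{n-c}{2}$ holds precisely because $c\le 2k$. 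In other words, strictly more than half of the off-cycle vertices have degree at least $k$, which is exactly the shape of the hypothesis required by Theorem~\ref{Thm:strengthening-ErdosGallai}.

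With this density in hand, I would build a $2$-connected auxiliary graph $H$ out of the off-cycle structure together with two attachment vertices $x,y\in V(C)$ serving as the ends of a prospective ear, adding if necessary one new vertex joined to all attachment points so that $H$ becomes $2$-connected while the internal off-cycle vertices retain their full degree (all of their neighbours already lie in $H$). Applying Theorem~\ref{Thm:strengthening-ErdosGallai} to $H$ with endpoints $x,y$ and parameter $k$ then yields an $(x,y)$-path of length at least $k$ through the off-cycle part; after pruning the auxiliary vertex if it was used, this produces an ear of $C$ of length at least $k$, contradicting the bound $|P|\le k-1$ above. This contradiction yields $c(G)\ge 2k$.

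I expect the main obstacle to be the construction and verification of $H$: I must simultaneously guarantee that $H$ is $2$-connected and that at least half of its non-endpoint vertices have degree at least $k$ \emph{measured in $H$}. The difficulty is that the off-cycle part need not be connected (it may split into several bridges), and that \emph{boundary} off-cycle vertices, which send edges to the deleted interior of $C$, can lose degree when restricted to $H$. Reconciling these --- by choosing the right bridge or aggregating bridges through a common virtual hub, by routing the endpoints so that the guaranteed length-$k$ path is genuinely an ear (handling the off-by-one caused by the virtual vertex), and by spending exactly the surplus ``$+k$'' in the hypothesis and the margin in $c\le 2k-1$ to keep the density above one half --- is the technical heart of the argument.
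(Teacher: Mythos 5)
Your counting step is exactly the paper's: since $c\le 2k-1$, some component $H$ of $G-C$ must contain at least $\frac{|H|+1}{2}$ vertices of degree at least $k$ in $G$ (otherwise the total number of high-degree vertices is at most $|C|+\sum_i\frac{|H_i|}{2}=\frac{n+|C|}{2}<\frac n2+k$), and your plan to feed such an $H$, augmented by auxiliary attachment vertices, into Theorem~\ref{Thm:strengthening-ErdosGallai} is also the paper's. But there is a genuine gap at the step you yourself flag as ``the technical heart'': you aim to extract a single \emph{ear} of length at least $k$, and that target is not attainable in general. A vertex $u\in V(H)$ with $d_C(u)=t\ge 2$ loses $t$ edges and regains at most one or two in any auxiliary graph built on $H$ plus a hub or a pair of endpoints, so Theorem~\ref{Thm:strengthening-ErdosGallai} only delivers a path of length about $k-t+2$, and the deficit cannot be recovered inside a single path. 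Concretely, take $H=K_a\vee\overline{K_{a-1}}$ with each clique vertex having $3$ neighbours on $C$ and $k=2a+1$: the $a$ clique vertices all have degree exactly $k$, so $H$ meets the ``more than half'' threshold, yet the longest ear through $H$ has length $2a=k-1$. No choice of attachment pair $x,y$ or virtual hub fixes this.

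The paper's resolution is to abandon the single ear in favour of an $(H,C)$-fan (Theorem~\ref{Thm:Woodall-FanLemma}): with $t:=\max\{d_C(u):u\in V(H)\}$, it attaches $x$ to the vertices achieving the maximum $t$ and $y$ to all vertices with a neighbour on $C$, so every high-degree vertex of $H$ keeps degree at least $k-t+2$; Theorem~\ref{Thm:strengthening-ErdosGallai} then gives a path of length at least $k-t+2$, and the $t-2$ unused attachment edges at the path's origin are added back as extra fan legs, yielding a fan with at least $k$ edges in total. Lemma~\ref{Lem:wellknown} (the sum over consecutive attachment points of the fan of the arc-length bounds) then gives $c(G)\ge 2k$ directly. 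Your crossing argument for the case $d_C(u)\ge k$ is correct and corresponds to the paper's $t\ge k$ star case, and your arc-exchange bound for ears is sound; what is missing is precisely the fan device that converts the unavoidable degree loss at the attachment vertices back into cycle length. Without it, the proposal does not close.
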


Using Theorem \ref{Thm:WoodallConj} as a tool, we can obtain a
partial solution to a conjecture of Bermond \cite{B76} on circumference
of a 2-connected graph. We refer the reader to
Subsection \ref{Subsec:BermondConj} for details.

Besides a proof of Conjecture \ref{Conj:Woodall},
Theorem \ref{Thm:strengthening-ErdosGallai} has
other applications, including, for example,
a Woodall-type Fan Lemma, and two generalizations
of Erd\H{o}s-Gallai Theorems on paths and cycles
under an independent set condition, respectively.
Moreover, with  Theorem \ref{Thm:strengthening-ErdosGallai}
in hand, we are able to present short proofs of Bondy's
conjecture \cite{B81} for large graphs and Loebl-Koml\'{o}s-S\'{o}s
Conjecture for paths. We should point
out that Fournier and Fraisse \cite{FF85} verified
Bondy's conjecture and Bazgan et al. \cite{BLW00} proved
Loebl-Koml\'{o}s-S\'{o}s Conjecture for paths. We will discuss
details in Subsection \ref{Subsec:Another-App}.

Throughout this paper, all graphs are simple and finite.
For a graph $G$ and $v \in V(G)$, the {\it closed neighborhood}
$N_G[v]$ is the set $N_G(v)\cup \{v\}$. If $S \subset V(G)$,
then $N_S(v):=N_G(v)\cap S$ and $d_S(v)=|N_S(v)|$. Moreover,
we use $G[S]$ to denote the subgraph of $G$ induced by $S$
and $G-S$ the subgraph of $G$ induced by $V(G)\backslash S$.
If $S=\{v\}$, we write $G-v$ instead of $G-\{v\}$. For a
subgraph $H$ of $G$, we define $N_S(H)=\bigcup_{v\in V(H)}N_S(v)$.
When there is no danger of ambiguity, for subgraphs $H'$ and
$H$ of $G$, We use $N_H(v)$, $d_H(v)$ and $N_H(H')$ instead of
$N_{V(H)}(v)$, $d_{V(H)}(v)$ and $N_{V(H)}(H')$, respectively.

For a path $P$ and $u,v\in V(P)$, let $P[u,v]$ be the segment of $P$
from $u$ to $v$. For a path $P$ with terminus $x$ and
a path $Q$ with origin $x$, if the union $P\cup Q$ is again
a path, we may simply denote it by $PxQ$.
For a separable graph $G$, a \emph{block} is a maximal non-separable
subgraph of $G$, and an \emph{end-block} is a block which contains
exactly one cut-vertex of $G$. For
an end-block $B$ and a cut-vertex $v\in V(B)$,
every vertex in $V(B)\backslash \{v\}$ is
called an \emph{inner-vertex}
of $B$. For $s\leq t$, let $[s,t]$ be
the set of integers $i$ with $s\leq i\leq t$.
For those notation not defined here, we
refer the reader to \cite{BM76}.

The rest of this paper is organized as follows.
All applications of Theorem \ref{Thm:strengthening-ErdosGallai}
are included in Section \ref{Sec:App}. The section
has three parts. In Subsection \ref{Subsec:WoodallConj},
we shall present a proof of Woodall's conjecture (assuming
Theorem \ref{Thm:strengthening-ErdosGallai}).
In Subsection \ref{Subsec:BermondConj}, we shall give a partial
solution to a conjecture of Bermond on circumference of
graphs with the help of Woodall's conjecture.
In Subsection \ref{Subsec:Another-App}, we shall present
several other applications of Theorems
\ref{Thm:strengthening-ErdosGallai} and \ref{Thm:WoodallConj}.
The proof of Theorem \ref{Thm:strengthening-ErdosGallai}
will be postponed to Section \ref{Sec:MainThm}.
In the last section, we will remark that a construction from
H\"{a}ggkvist and Jackson can disprove a conjecture
of Li \cite[Conjecture~4.14]{L13}
and mention a conjecture generalizing
Theorem \ref{Thm:strengthening-ErdosGallai}.

\section{Applications}\label{Sec:App}
\subsection{Proof of Woodall's conjeccture}\label{Subsec:WoodallConj}
The goal of this subsection is to prove Woodall's conjecture (assuming Theorem
\ref{Thm:strengthening-ErdosGallai}).

Let $G$ be a graph, $C$ a cycle of $G$, and $H$ a component
of $G-C$. A subgraph $F$ is called an \emph{$(H,C)$-fan},
if it consists of paths $P_1,P_2,\ldots,P_t$ where $t\geq 2$,
such that: (1) all $P_i$ have the same origin $v\in V(H)$
and pairwise different termini $u_i\in V(C)$, $1\leq i\leq t$;
(2) all internal vertices of $P_i$ are in $H$ and $P_i$'s are
pairwise internally disjoint.

We shall first prove a Woodall-type Fan Lemma.

\begin{thm}\label{Thm:Woodall-FanLemma}
Let $G$ be a 2-connected graph, $C$ a cycle of $G$, and $H$ a
component of $G-C$. If there are at least $\frac{|H|+1}{2}$
vertices in $V(H)$ of degree at least $k$ in $G$,
then there is an $(H,C)$-fan with at least $k$ edges.
\end{thm}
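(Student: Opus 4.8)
The plan is to deduce the lemma from Theorem~\ref{Thm:strengthening-ErdosGallai} by contracting the cycle $C$ to two new vertices. I would form an auxiliary graph $G'$ on $V(H)\cup\{x,y\}$ by setting $G'[V(H)]=G[V(H)]$ and joining both $x$ and $y$ to every vertex $w\in V(H)$ with $N_C(w)\ne\emptyset$; write $A$ for the set of these \emph{attachment} vertices. Since $|G'|=|H|+2$, the threshold $\frac{n-1}{2}$ of Theorem~\ref{Thm:strengthening-ErdosGallai} equals exactly $\frac{|H|+1}{2}$, matching the hypothesis. The key correspondence is that an $(x,y)$-path $xw_1\cdots w_m y$ in $G'$ yields a fan with $t=2$, namely the path $u_1w_1\cdots w_m u_2$ through $H$ with $u_1\in N_C(w_1)$ and $u_2\in N_C(w_m)$ chosen distinct, whose number of edges equals the length of the $(x,y)$-path. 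Thus it suffices to find an $(x,y)$-path of length at least $k$ in $G'$.

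Two structural inputs must be checked. For $2$-connectivity of $G'$: since $G$ is $2$-connected and $H$ is a component of $G-C$, one has $|A|\ge 2$ (a lone attachment vertex would be a cut-vertex of $G$), and for every $z$ each component of $H-z$ meets $A$ (otherwise $z$ would be a cut-vertex of $G$); as $x,y$ are joined to all of $A$ and $G[V(H)]$ is connected, $G'-z$ stays connected for any single $z$. The distinctness $u_1\ne u_2$ can be arranged whenever the path has length at least $k\ge2$, the only subtlety being the degenerate case $N_C(w_1)=N_C(w_m)=\{u\}$, which is handled by rerouting through $A$ using $2$-connectivity.

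The degree transfer is immediate when no counted vertex has too many neighbours on $C$: if $w\in V(H)$ satisfies $d_G(w)\ge k$ and $d_C(w)\le 2$, then $d_{G'}(w)=d_H(w)+2\ge d_H(w)+d_C(w)=d_G(w)\ge k$, so all counted vertices survive the contraction and Theorem~\ref{Thm:strengthening-ErdosGallai} produces the path, hence the fan.

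The main obstacle is a counted vertex $v$ with $D:=d_C(v)\ge3$, whose contracted degree $d_H(v)+2$ may fall below $k$ and so drop out of the count. I would choose $v$ with $D$ maximum among the counted vertices and build the fan as a spider centred at $v$: keep $D-1$ of its cycle-neighbours as legs of length one, and take one long leg to be a longest path from $v$ into $H$ ending on $C$, obtained by applying Theorem~\ref{Thm:strengthening-ErdosGallai} in the graph $G^*$ where $C$ is contracted to a single vertex $c$, with endpoints $v$ and $c$. Because $D$ is maximum, every counted $w$ has $d_{G^*}(w)\ge k-d_C(w)+1\ge k-D+1$, which is exactly the target length needed so that $D-1$ short legs together with the long leg total at least $k$. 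The delicate point—where I expect the real work to lie—is the tight counting in this last step: using $v$ as an endpoint costs one vertex from the count, leaving a possible off-by-one against the threshold $\frac{|H|+1}{2}$, which must be recovered from vertices that become eligible once the target drops to $k-D+1$; the residual case where the count is exactly tight and too few such vertices appear (forcing $A$ to be small, so that $H$ meets $C$ in few places) needs a separate direct argument exploiting that constrained structure.
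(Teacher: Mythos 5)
Your overall strategy --- replace $C$ by auxiliary vertices and invoke Theorem~\ref{Thm:strengthening-ErdosGallai} --- is the same as the paper's, but your treatment of the hard case leaves a genuine gap. When some counted vertex $v$ has $D=d_C(v)\ge 3$, you propose to apply Theorem~\ref{Thm:strengthening-ErdosGallai} in $G^*$ (with $C$ contracted to $c$) to the endpoint pair $(v,c)$, with target length $k-D+1$. But then $v$ itself is excluded from the count: $|G^*|=|H|+1$, so the theorem needs at least $\frac{|H|}{2}$ vertices of $V(G^*)\setminus\{v,c\}$ of degree at least $k-D+1$, while the hypothesis only guarantees $\frac{|H|+1}{2}-1=\frac{|H|-1}{2}$ of them once $v$ is spent as an endpoint. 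You acknowledge this off-by-one and defer it to ``a separate direct argument,'' which is exactly the part that is missing; nothing in the proposal recovers the lost vertex. A secondary, smaller gap is the degenerate situation $N_C(w_1)=N_C(w_m)=\{u\}$ in your easy case: ``rerouting through $A$'' may shorten the path below $k$, and no argument is given that it does not.

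The paper closes both gaps by a different choice of auxiliary graph. Set $t=\max\{d_C(u):u\in V(H)\}$ and (for $t\ge2$, $t\le k-1$) join a new vertex $x$ only to $U_1=\{u:d_C(u)=t\}$, a new vertex $y$ to all attachment vertices, and add the edge $xy$. Then \emph{every} vertex of $H$ with $d_G(u)\ge k$ has degree at least $k-t+2$ in $H'$ regardless of its own cycle-degree (a vertex with $d_C(u)=t$ gains two new neighbours, one with $1\le d_C(u)<t$ gains one, one with $d_C(u)=0$ loses nothing), and since both endpoints $x,y$ are new vertices the count $\frac{|H|+1}{2}=\frac{|H'|-1}{2}$ is preserved exactly --- no vertex of $H$ is sacrificed as an endpoint. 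The fan is then centred at the \emph{first internal vertex} $v_1$ of the resulting $(x,y)$-path, which lies in $U_1$ and so supplies $t-2$ extra one-edge legs; the centre is discovered by the path rather than fixed in advance, which is precisely what avoids your off-by-one. The case $t=1$ is handled separately by splitting the attachment set between $x$ and $y$ (so the lifted endpoints on $C$ are automatically distinct), which also disposes of your degenerate-endpoint issue. You would need to either rebuild your argument along these lines or supply the missing residual-case analysis before the proof is complete.
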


\begin{proof}
Set $t:=\max \{d_C(u):u\in V(H)\}$. First suppose $t=1$. Choose
$x_1\in N_C(H)$. Since $G$ is 2-connected, $d_C(H)\geq 2$. If
$N_H(x_1)=N_H(C)$, we have $t\geq 2$, a contradiction. Thus
$N_H(C)\neq N_H(x_1)$. We construct a new graph $H'$ from $H$
by adding two new vertices $x,y$ with edge set
$$E(H')=E(H)\cup\{xv:v\in N_H(x_1)\}\cup
\{yv:v\in N_H(C)\backslash N_H(x_1)\}\cup\{xy\}.$$
Obviously, $H'$ is 2-connected.
Furthermore, $d_{H'}(u)=d_G(u)$ for any vertex $u\in V(H)$.
By Theorem \ref{Thm:strengthening-ErdosGallai}, $H'$
has an $(x,y)$-path of length at least $k$. Let $P:=xv_1\cdots v_py$
be such a path. Since $v_1\in N_{H}(x_1)$ and $v_p\in
N_H(C)\backslash N_H(x_1)$, there exists $y_1\in N_C(v_p)$ such that
$y_1\neq x_1$. Thus, $P'=x_1v_1\ldots v_py_1$ is an $(x_1,y_1)$-path
of length at least $k$ with all internal vertices in $H$. Such an
$(x_1,y_1)$-path is an $(H,C)$-fan we seek.

Now suppose $t\geq 2$. Let $U_1:=\{u\in V(H):d_C(u)=t\}$ and
$U_2:=\{u\in V(H):d_C(u)\geq 1\}$. Construct a new graph $H'$ from $H$
by adding two vertices $x,y$ with edge set
$$
E(H')=E(H)\cup\{xu:u\in U_1\}\cup \{yu:u\in U_2\}\cup \{xy\}.
$$
Then $H'$ is 2-connected. If $t\geq k$, then there is
an $(H,C)$-fan of length at least $k$ which is a star. So, assume
that $t\leq k-1$. Notice that every vertex in $H$ of degree
at least $k$ in $G$ has degree at least $k-t+2$ in $H'$. By Theorem
\ref{Thm:strengthening-ErdosGallai}, $H'$ has an $(x,y)$-path, say
$P=xv_1\cdots v_{p'}y$, of length at least $k-t+2$. Observe that $v_{p'}$ has at
least one neighbor on $C$, and $v_1$ has at least $t\geq 2$
neighbors on $C$. Let $y_1\in N_C(v_{p'})$ and $x_1\in
N_C(v_1)\backslash\{y_1\}$. Thus, $P':=x_1v_1\ldots v_{p'}y_1$ is an
$(x_1,y_1)$-path of $G$ with all internal vertices in $H$. The path $P'$,
together with all edges $zv_1$ with $z\in N_C(v_1)\backslash \{x_1,y_1\}$,
shall create an $(H,C)$-fan with at least $k$ edges. The proof of
Theorem \ref{Thm:Woodall-FanLemma} is complete.
\end{proof}

Now we are ready to prove Woodall's conjecture, which needs
Theorem \ref{Thm:Woodall-FanLemma} and a well-known fact
as following.
\begin{lemma}\label{Lem:wellknown}
Let $G$ be a 2-connected nonhamiltonian graph, $C$ a longest cycle and $H$
a component of $G-C$. If there is an $(H,C)$-fan with at least $k$
edges, then $c(G)\geq 2k$.
\end{lemma}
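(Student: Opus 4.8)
The plan is to exploit the maximality of $C$ via a rerouting argument that swaps a single arc of $C$ for a detour through the fan. Write $F$ for the given $(H,C)$-fan, consisting of paths $P_1,\dots,P_t$ with $t\ge 2$, sharing the common origin $v\in V(H)$ and having distinct termini $u_1,\dots,u_t\in V(C)$; by hypothesis $\sum_{i=1}^{t}\ell(P_i)\ge k$, where $\ell(\cdot)$ denotes length (number of edges). First I would relabel so that $u_1,u_2,\dots,u_t$ occur in this cyclic order along $C$, and for $i\in[1,t]$ let $A_i$ be the arc of $C$ from $u_i$ to $u_{i+1}$ (indices taken modulo $t$, so $u_{t+1}=u_1$). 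These arcs partition the edges of $C$, so $\sum_{i=1}^{t}\ell(A_i)$ equals the length of $C$, which is $c(G)$ since $C$ is a longest cycle.

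For each $i$ I would build a cycle $D_i$ by concatenating the $u_i$--$u_{i+1}$ path obtained from traversing $P_i$ backwards from $u_i$ to $v$ and then $P_{i+1}$ forwards from $v$ to $u_{i+1}$, with the portion of $C$ running from $u_{i+1}$ back to $u_i$ through every arc except $A_i$. The key point is that $D_i$ is a genuine cycle: the internal vertices of the fan-paths lie in $H$ and hence off $C$, and $P_i,P_{i+1}$ meet only at $v$ since the fan is internally disjoint, so the detour and the retained part of $C$ share exactly the two vertices $u_i,u_{i+1}$. Its length is $\ell(P_i)+\ell(P_{i+1})+\bigl(c(G)-\ell(A_i)\bigr)$, and since $C$ is a longest cycle we have $\ell(D_i)\le c(G)$, which rearranges to
\[
\ell(P_i)+\ell(P_{i+1})\le \ell(A_i).
\]

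The final step is to sum this inequality over all $i\in[1,t]$ cyclically. On the right the arcs reassemble to all of $C$, giving $\sum_i\ell(A_i)=c(G)$; on the left each path $P_j$ is counted exactly twice, once as the left end and once as the right end of a consecutive pair, so the left side equals $2\sum_j\ell(P_j)\ge 2k$. Hence $2k\le c(G)$, as desired. I expect the only delicate point to be the verification that each $D_i$ is a cycle and not merely a closed walk repeating a vertex; this rests entirely on the fan being internally disjoint and lying inside $H$ off $C$, together with the termini $u_1,\dots,u_t$ being distinct, all of which are built into the definition of an $(H,C)$-fan. The smallest case $t=2$ merits a quick sanity check, since then there are only two arcs and the two cyclic pairs $(u_1,u_2)$ and $(u_2,u_1)$ yield the two inequalities whose sum still gives $2\sum_j\ell(P_j)\le c(G)$.
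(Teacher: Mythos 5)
Your proof is correct. The paper gives no proof of this lemma at all---it is invoked as a ``well-known fact''---and your argument (comparing each arc $A_i$ of $C$ between consecutive termini with the detour through $P_i$ and $P_{i+1}$, then summing the resulting inequalities cyclically so that each fan path is counted twice) is exactly the classical argument the authors are implicitly relying on, with the one delicate point, that each $D_i$ is a genuine cycle because the fan is internally disjoint and lies off $C$, correctly identified and justified.
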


\noindent
{\bf Proof of Theorem \ref{Thm:WoodallConj}.}
Let $C$ be a longest cycle of $G$. By the condition $\frac{n}{2}+k\leq n$,
we infer $n\geq 2k$. If $C$
is a Hamilton cycle, then we have $|C|\geq 2k$. Thus, $G$
is not Hamiltonian. Let $\mathcal{H}=\{H_1,H_2,\ldots,H_t\}$ be
a collection of all components of $G-C$. If there exists an integer $i\in[1,t]$,
such that $H_i$ contains at least $\frac{|H_i|+1}{2}$ vertices
of degree at least $k$ in $G$, then by Theorem \ref{Thm:Woodall-FanLemma},
there is an $(H_i,C)$-fan with at least $k$ edges. By Lemma \ref{Lem:wellknown}
we have $|C|\geq 2k$. Therefore, $H_i$ contains at most $\frac{|H_i|}{2}$
vertices of degree at least $k$ for each $i\in [1,t]$. This implies that the number
of vertices of degree at least $k$ in $G$ is at most
$$|C|+\sum_{i=1}^t\frac{|H_i|}{2}=\frac{n+|C|}{2}.$$
Hence $\frac{n+|C|}{2}\geq\frac{n}{2}+k$ and this implies
$|C|\geq 2k$, completing the proof of Theorem \ref{Thm:WoodallConj}. {\hfill$\Box$}

\subsection{On a conjecture of Bermond}\label{Subsec:BermondConj}
Generalizing the classical degree conditions for Hamilton
cycles, Bermond \cite{B76} proposed the following conjecture.
The conjecture was recalled in Dean and Fraisse \cite{DF89},
and also listed as a conjecture in a monograph of
Bollob\'as (see \cite[Conjecture~32,~pp.~296]{B78}).
\begin{conj}[Bermond \cite{B76}]\label{Conj-Bermond-1}
Let $G$ be a 2-connected graph with vertex set $V=\{x_i:1\leq i\leq n\}$ and $c$ be a positive
integer, where $c\leq n$. If for every pair of vertices
$x_i,x_j$, $i<j$, one of the following holds:
$$
(\mbox{i})~i+j<c;\ (\mbox{ii})~x_ix_j\in E(G);\ (\mbox{iii})~d(x_i)>i;\  (\mbox{iv})~d(x_j)\geq j;\  (\mbox{v})~d(x_i)+d(x_j)\geq c,
$$
\noindent
then $c(G)\geq c$.
\end{conj}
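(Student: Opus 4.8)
The plan is to derive Conjecture~\ref{Conj-Bermond-1} from the now-proved Theorem~\ref{Thm:WoodallConj} by a counting argument, at least in the range where $n$ is not too close to $c$. Set $k:=\lceil c/2\rceil$, so that $2k\ge c$. I would argue by contradiction: suppose $c(G)<c$. Then $c(G)<2k$, so Theorem~\ref{Thm:WoodallConj} (in contrapositive form) shows that $G$ has fewer than $\frac{n}{2}+k$ vertices of degree at least $k$; equivalently, the set $L:=\{v\in V(G):d(v)\le k-1\}$ of low-degree vertices satisfies $|L|>\frac{n}{2}-k$. Under this abundance of low-degree vertices I would contradict the hypothesis of the conjecture by producing a single pair $x_i,x_j$ for which none of the five alternatives (i)--(v) holds.

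The key observation is that any non-adjacent pair $x_i,x_j$ with $i<j$, with both endpoints in $L$, and with $i\ge k-1$ and $i+j\ge c$, violates all five conditions simultaneously: (v) fails since $d(x_i)+d(x_j)\le 2(k-1)=2k-2<c$; (iii) fails since $d(x_i)\le k-1\le i$; (iv) fails since $d(x_j)\le k-1<k\le j$; (i) fails since $i+j\ge c$; and (ii) fails by non-adjacency. So everything reduces to locating such a pair. For this I would take $x_j$ to be the vertex of $L$ of largest label $J$ (note $J\ge|L|$ since the labels are distinct). As $x_j$ has at most $k-1$ neighbours in $G$, and at most $k-2$ vertices of the whole graph carry a label below $k-1$, at least $|L|-2k+2$ vertices of $L$ are at once non-adjacent to $x_j$ and carry a label $\ge k-1$. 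Any such vertex $x_i$ furnishes a pair with $i\ge k-1$, non-adjacent, and $i+j\ge(k-1)+|L|>\frac{n}{2}-1$; once $n\ge 2c+2$ this already exceeds $c$, and the contradiction is complete.

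The hard part will be the bookkeeping that guarantees the partner $x_i$ actually exists, which needs $|L|-2k+2\ge1$, i.e.\ $|L|\ge 2k-1$. The bound $|L|>\frac{n}{2}-k$ secures this only once $\frac{n}{2}-k\gtrsim 2k$, that is roughly $n\ge 3c$, and it is exactly here that the method yields the conjecture. As $n$ descends toward $c$ the set $L$ can become too small to force a non-adjacent large-label partner, and the argument breaks down; the extreme case $c=n$ is the Hamiltonian one, where (i)--(v) specialise to the classical Chv\'atal-type degree criterion. I expect this boundary regime to be the genuine obstacle, and the reason that only a partial solution to Bermond's conjecture is attainable through Theorem~\ref{Thm:WoodallConj} alone: closing the gap would seem to require feeding structural information about longest cycles back into the count rather than invoking the circumference bound as a black box.
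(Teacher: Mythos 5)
Your proposal is correct (the deferred bookkeeping does work out, and in fact recovers essentially the paper's threshold $n\geq 3c-1$) and follows the same route as the paper: apply Theorem~\ref{Thm:WoodallConj} with $k=\lceil c/2\rceil$ after showing that conditions (i)--(v) permit only about $c$ vertices of degree below $c/2$. The paper argues directly---choosing a non-feasible vertex $x_i$ with $d(x_i)\leq i$ and $i$ minimal, and bounding the number of non-feasible vertices by $c-1$---whereas you argue in contrapositive from the maximal-index low-degree vertex, but the core observation that a non-adjacent low-degree pair with large indices violates all five alternatives is identical in both proofs.
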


With the help of Theorem \ref{Thm:WoodallConj}, we can obtain a
partial solution to Conjecture \ref{Conj-Bermond-1}.
\begin{thm}
Conjecture \ref{Conj-Bermond-1} is true if $n\geq 3c-1$.
\end{thm}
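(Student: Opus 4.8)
The plan is to derive the theorem directly from Woodall's theorem (Theorem~\ref{Thm:WoodallConj}). Set $k=\lceil c/2\rceil$, so that $2k\ge c$. By Theorem~\ref{Thm:WoodallConj} it is enough to prove that $G$ has at least $\frac{n}{2}+k$ vertices of degree at least $k$, since this immediately yields $c(G)\ge 2k\ge c$. I would therefore argue by contradiction: writing $B=\{v\in V(G):d(v)\le k-1\}$ for the set of \emph{low-degree} vertices, assume $|B|>\frac{n}{2}-k$, and aim to contradict the hypothesis $n\ge 3c-1$.

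The core of the argument is a structural observation about the labelled graph. Put $\tau=\lceil(c-1)/2\rceil$, so that $\tau=k-1$ when $c$ is odd and $\tau=k$ when $c$ is even. I claim that any two low-degree vertices $x_i,x_j\in B$ with $\tau\le i<j$ must be adjacent. Indeed, since $d(x_i),d(x_j)\le k-1$, one checks that conditions (i), (iii), (iv) and (v) of the hypothesis all fail for the pair $(x_i,x_j)$: condition~(v) fails because $d(x_i)+d(x_j)\le 2k-2<c$; condition~(iii) fails because $d(x_i)\le k-1\le i$, so $d(x_i)>i$ is impossible; condition~(iv) fails because $d(x_j)\le k-1<j$; and condition~(i) fails because $i+j\ge 2\tau+1\ge c$. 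Hence only condition~(ii) remains, i.e.\ $x_ix_j\in E(G)$. Consequently the low-degree vertices of index at least $\tau$ induce a clique, and since every vertex of this clique has degree at most $k-1$, the clique has at most $k$ vertices.

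Once this is established the contradiction is a short count. There are only $\tau-1$ indices below $\tau$, so $B$ contains at most $\tau-1$ vertices of index less than $\tau$ and, by the claim, at most $k$ vertices of index at least $\tau$; thus $|B|\le\tau+k-1$. Combining with $|B|>\frac{n}{2}-k$ gives $n<2\tau+4k-2$. A case check on the parity of $c$ shows $2\tau+4k-2\le 3c-1$, with equality exactly when $c$ is odd, and therefore $n<2\tau+4k-2\le 3c-1\le n$, which is absurd. Hence $G$ has at least $\frac{n}{2}+k$ vertices of degree at least $k$, and Theorem~\ref{Thm:WoodallConj} completes the proof.

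I expect the only real difficulty to be calibrating the threshold $\tau$ so that the clique claim is valid while the final count remains tight enough to match the bound $n\ge 3c-1$. The sensitive point is the parity of $c$: for even $c$ the two smallest admissible indices $k-1$ and $k$ can still satisfy condition~(i) (their sum is $2k-1<c$), so one is forced to take $\tau=k$ rather than $k-1$, and it is precisely this adjustment that makes the inequality $2\tau+4k-2\le 3c-1$ hold in both parities. Everything else reduces to routine degree counting.
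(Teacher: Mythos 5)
Your proof is correct. It shares the paper's overall skeleton --- set $k=\lceil c/2\rceil$, bound the number of vertices of degree less than $k$, and then invoke Theorem~\ref{Thm:WoodallConj} --- but the combinatorial core is genuinely different. The paper first locates a low-degree vertex $x_i$ with $d(x_i)\le i$ and $i$ minimal, and then splits into the cases $i<\frac{c}{2}$ and $i\ge\frac{c}{2}$, in each case bounding the low-degree vertices by $|N(x_i)|$ plus the number of small indices; this yields at most $c-1$ low-degree (``non-feasible'') vertices. You instead observe that any two low-degree vertices with indices at least $\tau=\lceil(c-1)/2\rceil$ violate conditions (i), (iii), (iv), (v) of Conjecture~\ref{Conj-Bermond-1} and hence must be adjacent, so the high-index low-degree vertices form a clique of size at most $k$; together with the $\tau-1$ possible low indices this gives the bound $\tau+k-1$, which one checks equals $c-1$ in both parities, so the two arguments are numerically equivalent and lead to the same threshold ($3c-1$ for odd $c$, $3c-2$ for even $c$). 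Your clique observation is arguably cleaner and avoids the extremal-vertex case analysis, at the cost of having to calibrate $\tau$ by parity; the paper's version localizes everything at a single worst vertex and needs no parity discussion until the final count. All your verifications (failure of (i), (iii), (iv), (v) for pairs above the threshold, the clique-size bound, and the final inequality $n<2\tau+4k-2\le 3c-1$) are sound.
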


\begin{proof}
In this proof, we say that a vertex is \emph{feasible}
if its degree is at least $\frac{c}{2}$.

We show that under the condition of Conjecture \ref{Conj-Bermond-1},
$G$ has at most $c-1$ non-feasible vertices. If this is
already proved, then $G$ will have at least
$\frac{n}{2}+\lceil\frac{c}{2}\rceil$ feasible vertices
(when $n\geq3c-1$) and so $c(G)\geq c$ by
Theorem \ref{Thm:WoodallConj}.

We first show that there exists a non-feasible vertex,
say $x_i$, such that $d(x_i)\leq i$. Indeed, if not,
then every non-feasible vertex $x_k$ satisfies that $k<d(x_k)<\frac{c}{2}$.
Thus, there are at most $\lceil\frac{c}{2}\rceil-2$ non-feasible vertices,
and we are done.

Now we choose a non-feasible vertex $x_i$ with $d(x_i)\leq i$ and $i$ is
as small as possible.
Suppose first that $i<\frac{c}{2}$. In this case, every
non-feasible vertex $x_j$ satisfies that either $j<c-i$ or $x_ix_j\in E(G)$.
In fact, if $j\geq c-i$ and $x_ix_j\notin E(G)$, then the pair $(x_i,x_j)$
satisfies none of the conditions (i)-(v). It follows that there are
at most $|\{x_j: j<c-i\}|+|N(x_i)|\leq c-i-1+i=c-1$ non-feasible vertices.
Secondly, suppose that $i\geq \frac{c}{2}$. Let $x_j$ be a non-feasible
vertex other that $x_i$. If $j<i$, then by the choice of $x_i$, we have
$j<d(x_j)<\frac{c}{2}$, i.e., $j\leq \lceil\frac{c}{2}\rceil-2$.
If $j>i$, then $x_ix_j\in E(G)$ (for otherwise the pair $(x_i,x_j)$
satisfies none of the conditions (i)-(v)). It follows that there
are at most
$$\left|\left\{x_j: j\leq \left\lceil \frac{c}{2}\right\rceil-2\right\}\right|+|\{x_i\}|+|N(x_i)|\leq \left(\left\lceil\frac{c}{2}\right\rceil-2\right)+1+\left(\left\lceil\frac{c}{2}\right\rceil-1\right)\leq c-1$$
non-feasible vertices. The proof is complete.
\end{proof}
\subsection{Other applications of Theorems \ref{Thm:strengthening-ErdosGallai}
and \ref{Thm:WoodallConj}}\label{Subsec:Another-App}
In this subsection, we present several other consequences of
Theorem \ref{Thm:strengthening-ErdosGallai}.
A famous consequence of Menger's theorem is known as Fan Lemma
as follows.
\begin{thm}(Dirac \cite{D52})
Let $G$ be a $k$-connected graph, $v\in V(G)$ and $Y\subseteq V(G)\backslash \{v\}$
with $|Y|\geq k$. Then there are $k$ internally disjoint paths from $v$ to $Y$
whose termini are distinct.
\end{thm}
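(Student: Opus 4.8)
The plan is to derive the Fan Lemma from the vertex version of Menger's theorem via the standard trick of contracting the target set to a single vertex. First I would form an auxiliary graph $G'$ from $G$ by adjoining one new vertex $w$ together with all edges $wy$ for $y\in Y$. Since $v\notin Y$, the vertices $v$ and $w$ are nonadjacent in $G'$, so Menger's theorem applies: the maximum number of internally disjoint $(v,w)$-paths equals the minimum cardinality of a vertex set separating $v$ from $w$. My goal is therefore reduced to showing that no set of fewer than $k$ vertices separates $v$ from $w$ in $G'$.

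To this end, suppose $S$ is a $(v,w)$-separator with $|S|\le k-1$. Such a separator consists only of internal vertices, so $v,w\notin S$ and hence $S\subseteq V(G)\setminus\{v\}$. Because $|Y|\ge k>|S|$, the set $Y$ is not contained in $S$, so some $y_0\in Y$ lies outside $S$, and $y_0$ remains adjacent to $w$ in $G'-S$. The key point is that $|S|<k$ and $G$ is $k$-connected, so $G-S$ is connected; in particular $v$ and $y_0$ lie in the same component of $G-S$, yielding a path from $v$ to $y_0$ avoiding $S$. Appending the edge $y_0w$ produces a $(v,w)$-path in $G'-S$, contradicting that $S$ separates $v$ from $w$. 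Hence every $(v,w)$-separator has size at least $k$, and Menger's theorem furnishes $k$ internally disjoint $(v,w)$-paths $Q_1,\dots,Q_k$ in $G'$.

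Finally I would translate these back into the desired fan. Since the only neighbors of $w$ are in $Y$, the penultimate vertex of each $Q_i$ lies in $Y$; deleting $w$ from $Q_i$ produces a path $P_i$ from $v$ ending at a vertex $y_i\in Y$. The $P_i$ are internally disjoint because the $Q_i$ were (they shared only $v$ and $w$), and their termini $y_i$ are pairwise distinct: if two of the paths ended at the same vertex $y\in Y$, then $y$ would be an internal vertex common to two of the $Q_i$, violating internal disjointness. If one insists that each path meet $Y$ only at its terminus, I would instead truncate each $P_i$ at its first vertex in $Y$, which preserves both internal disjointness and distinctness of termini by the same argument.

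I expect the only genuinely delicate step to be the separator analysis in the middle paragraph, specifically the observation that a sub-$k$ separator of $G'$ must leave both $v$ and some target vertex $y_0\in Y$ inside a single component of $G-S$. This is exactly where the hypothesis $|Y|\ge k$ and the $k$-connectivity of $G$ are jointly used: the former guarantees a surviving target, the latter guarantees it is reachable from $v$. The reduction to Menger and the concluding bookkeeping on termini are routine once this point is secured.
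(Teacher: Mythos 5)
Your proof is correct: the auxiliary-vertex reduction to Menger's theorem, the separator analysis using $|Y|\ge k$ together with $k$-connectedness, and the bookkeeping on distinct termini are all sound. The paper states this Fan Lemma without proof, describing it only as ``a famous consequence of Menger's theorem,'' which is exactly the derivation you carry out, so your argument matches the intended route.
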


The following is a variant of Fan Lemma for 2-connected graphs,
which is a corollary of Theorem \ref{Thm:Woodall-FanLemma}.
\begin{thm}\label{Thm:FanLemma}
Let $G$ be a 2-connected graph, $C$ a cycle of $G$, and $H$ a
component of $G-C$. If each vertex in $H$ has degree at least $k$
in $G$, then there is an $(H,C)$-fan with at least $k$ edges.
\end{thm}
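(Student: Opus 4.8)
The plan is to deduce this statement directly from Theorem \ref{Thm:Woodall-FanLemma}, since the hypothesis imposed here is strictly stronger than the one required there. First I would observe that the condition ``each vertex in $H$ has degree at least $k$ in $G$'' means that \emph{all} $|H|$ vertices of $V(H)$ have degree at least $k$, so the number of vertices in $V(H)$ of degree at least $k$ in $G$ is exactly $|H|$. Because $H$ is a component of $G-C$, it is nonempty, so $|H|\geq 1$, and hence $|H|\geq \frac{|H|+1}{2}$.

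Consequently, the counting hypothesis of Theorem \ref{Thm:Woodall-FanLemma} is met: there are at least $\frac{|H|+1}{2}$ vertices in $V(H)$ of degree at least $k$ in $G$. Applying that theorem then immediately produces an $(H,C)$-fan with at least $k$ edges, which is precisely the desired conclusion.

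I expect no real obstacle in this argument, as the entire combinatorial content is already encapsulated in Theorem \ref{Thm:Woodall-FanLemma}. The only point requiring verification is the elementary inequality $|H|\geq \frac{|H|+1}{2}$, valid whenever $|H|\geq 1$, which guarantees that the uniform degree condition specializes the weighted counting condition. The statement is therefore a genuine corollary rather than an independent result.
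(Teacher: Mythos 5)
Your proposal is correct and matches the paper's intent exactly: the paper states this result as a corollary of Theorem \ref{Thm:Woodall-FanLemma} without further argument, and your observation that $|H|\geq\frac{|H|+1}{2}$ for $|H|\geq 1$ is precisely the (trivial) verification needed.
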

We point out that Fujisawa et al. \cite{FYZ05} proved a stronger theorem
than Theorem \ref{Thm:FanLemma}.

Bazgan, Li and Wo\'{z}niak \cite{BLW00} confirmed the
famous Loebl-Koml\'{o}s-S\'{o}s conjecture
for paths. Their result is a direct corollary of our
Theorem \ref{Thm:strengthening-ErdosGallai}.
\begin{thm}[Bazgan, Li, Wo\'{z}niak \cite{BLW00}]\label{Thm:BLW}
Let $G$ be a graph on $n$ vertices. If there are at least $\frac{n}{2}$ vertices
of degree at least $k$, then $G$ contains a path of length at least $k$.
\end{thm}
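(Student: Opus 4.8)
The plan is to deduce Theorem~\ref{Thm:BLW} from Theorem~\ref{Thm:strengthening-ErdosGallai} by a one-vertex gadget, after a routine reduction to the connected case. Write $D=\{z\in V(G):d_G(z)\ge k\}$, so that the hypothesis reads $|D|\ge n/2$; I may assume $k\ge 1$ and $n\ge 2$, since otherwise a path of length at least $k$ is trivial. First I would remove the connectivity assumption: if $G$ has components $C_1,\dots,C_m$, then since degrees are unchanged inside a component, $\sum_i |D\cap V(C_i)|=|D|\ge \tfrac12\sum_i|C_i|$, so some component $C$ satisfies $|D\cap V(C)|\ge |C|/2$. As a path of length at least $k$ in $C$ is also one in $G$, it suffices to treat the case where $G$ is connected.

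Now comes the key step. Assuming $G$ connected with $n\ge 2$, I would form $G^{x}$ by adding one new vertex $x$ joined to every vertex of $G$. Then $G^{x}$ has $N=n+1\ge 3$ vertices and is $2$-connected: deleting $x$ leaves the connected graph $G$, while deleting any $z\in V(G)$ leaves $x$ adjacent to all remaining vertices. I would then choose a second endpoint $y\in V(G)$, taking $y\notin D$ if such a vertex exists and otherwise $y$ arbitrary, and apply Theorem~\ref{Thm:strengthening-ErdosGallai} to $G^{x}$ with endpoints $x,y$ and with the degree/length parameter $k+1$ in place of $k$. Since $d_{G^{x}}(z)=d_G(z)+1$ for every $z\in V(G)$, a vertex $z\in V(G)$ has degree at least $k+1$ in $G^{x}$ exactly when $z\in D$; hence the number of vertices of $V(G^{x})\setminus\{x,y\}=V(G)\setminus\{y\}$ having degree at least $k+1$ equals $|D\setminus\{y\}|$. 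An $(x,y)$-path $x\,a_1a_2\cdots a_\ell$ of $G^{x}$ of length at least $k+1$ has all internal edges $a_ia_{i+1}$ lying in $G$ (none is incident to $x$), so $a_1\cdots a_\ell$ is a path of $G$ of length at least $k$, which is the desired conclusion.

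The crux, and the only step I expect to be a genuine obstacle, is a threshold computation that is off by one half and must be made to fall the right way. Theorem~\ref{Thm:strengthening-ErdosGallai} requires at least $(N-1)/2=n/2$ eligible vertices in $V(G^{x})\setminus\{x,y\}$, and I claim $|D\setminus\{y\}|\ge n/2$ in every case: if $y\notin D$ then $|D\setminus\{y\}|=|D|\ge n/2$, while if $D=V(G)$ (so no low-degree $y$ exists) then $|D\setminus\{y\}|=n-1\ge n/2$ because $n\ge 2$. This is precisely why I add a single universal vertex rather than two: adding two would raise the required threshold to $(n+1)/2$, and the bound $|D|\ge n/2$ would no longer suffice when $n$ is even, whereas adding one keeps the count at $n/2$ and simultaneously arranges the clean length bookkeeping (the internal path is exactly one shorter than the $(x,y)$-path, matching the use of parameter $k+1$). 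Once this threshold is verified, Theorem~\ref{Thm:strengthening-ErdosGallai} applies directly and the proof is finished.
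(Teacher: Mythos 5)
Your proposal is correct and follows essentially the same route as the paper: reduce to the connected case, attach a single universal vertex $x$, pick $y$ of small degree when one exists, apply Theorem~\ref{Thm:strengthening-ErdosGallai} with parameter $k+1$, and delete $x$ from the resulting $(x,y)$-path. The threshold count $|D\setminus\{y\}|\ge n/2=\frac{|G^{x}|-1}{2}$ that you single out as the crux is exactly the computation the paper performs.
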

\begin{proof}
We only need to prove the theorem for the case of $G$ being
connected. Let $G'$ be obtained from $G$ by adding
a new vertex $x$ and joining $x$ to all vertices
in $G$. Then $G'$ is 2-connected. If every vertex
in $G$ has degree at least $k$, then choose $y\in V(G)$
arbitrarily; otherwise, choose $y\in V(G)$ such
that $d_G(y)\leq k-1$. There are at least
$\frac{|G'|-1}{2}$ vertices in $V(G')\backslash \{x,y\}$ of degree at
least $k+1$. By Theorem \ref{Thm:strengthening-ErdosGallai},
there is an $(x,y)$-path $P$ of length at least
$k+1$. Deleting the vertex $x$ in $P$ gives us a
required path. This proves Theorem \ref{Thm:BLW}.
\end{proof}

Theorem \ref{Thm:strengthening-ErdosGallai} implies
a generalization of Erd\H{o}s-Gallai Theorem
under an independent set condition.
\begin{thm}
Let $k,s\geq 1$ and $G$ be a 2-connected graph
on $n\geq 2ks+3$ vertices and $x,y\in V(G)$.
If $\max\{d(v):v\in S\}\geq k$ for any independent
set $S\subset V(G)\backslash \{x,y\}$ with
$|S|=s+1$, then $G$ has an $(x,y)$-path of
length at least $k$.
\end{thm}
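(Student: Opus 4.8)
The plan is to reduce the statement directly to Theorem \ref{Thm:strengthening-ErdosGallai}. That theorem already produces an $(x,y)$-path of length at least $k$ once we know that at least $\frac{n-1}{2}$ vertices of $V(G)\backslash\{x,y\}$ have degree at least $k$. So the entire task is to show that the independent-set hypothesis forces this many high-degree vertices. To this end I would partition $V(G)\backslash\{x,y\}$ into the set $L=\{v:d_G(v)\geq k\}$ of \emph{high-degree} vertices and the set $W=\{v:d_G(v)\leq k-1\}$ of \emph{low-degree} vertices, and aim to bound $|W|$ from above.

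The key observation is a reinterpretation of the hypothesis. The condition ``every independent set $S\subseteq V(G)\backslash\{x,y\}$ with $|S|=s+1$ contains a vertex of degree at least $k$'' says precisely that no $(s+1)$-element independent set lies entirely inside $W$; equivalently, the induced subgraph $G[W]$ has independence number at most $s$. Now every vertex of $W$ has degree at most $k-1$ in $G$, hence maximum degree at most $k-1$ in $G[W]$. Applying the standard greedy (Turán-type) bound $\alpha(G[W])\geq |W|/(\Delta(G[W])+1)$, which follows by repeatedly selecting a vertex and discarding it together with its at most $k-1$ neighbours, we obtain
$$
s \;\geq\; \alpha(G[W]) \;\geq\; \frac{|W|}{k},
$$
and therefore $|W|\leq ks$. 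This greedy estimate, translating the forbidden-independent-set condition into a cap on the number of low-degree vertices, is the crux of the argument; the rest is arithmetic.

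It then remains to count. Since $|V(G)\backslash\{x,y\}|=n-2$, the number of high-degree vertices satisfies $|L|=(n-2)-|W|\geq n-2-ks$. The hypothesis $n\geq 2ks+3$ gives $n-3\geq 2ks$, equivalently $2(n-2-ks)\geq n-1$, so that $|L|\geq \frac{n-1}{2}$. Thus $V(G)\backslash\{x,y\}$ contains at least $\frac{n-1}{2}$ vertices of degree at least $k$, and Theorem \ref{Thm:strengthening-ErdosGallai} yields an $(x,y)$-path of length at least $k$, as required. I do not expect any genuine obstacle beyond correctly invoking the greedy independence bound; the numerical threshold $2ks+3$ is exactly what makes the final inequality tight, which is a useful sanity check that the hypothesis is being used optimally.
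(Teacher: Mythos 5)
Your proof is correct and follows essentially the same route as the paper: the paper also takes a maximal independent set $I$ of low-degree vertices in $V(G)\backslash\{x,y\}$, notes $|I|\leq s$ and that every vertex outside $\bigcup_{u\in I}N[u]\cup\{x,y\}$ has degree at least $k$, giving at least $n-2-ks\geq\frac{n-1}{2}$ feasible vertices, and then applies Theorem \ref{Thm:strengthening-ErdosGallai}. Your greedy independence bound $\alpha(G[W])\geq |W|/k$ is just a repackaging of that same maximal-independent-set argument.
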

\begin{proof}
Let $I$ be a maximal independent set such that for every vertex $u\in I$,
$d(u)<k$ and $u\notin\{x,y\}$. By condition we have $|I|\leq s$.
For any vertex $v\notin\bigcup_{u\in I}N[u]\cup \{x,y\}$, we have
$d(v)\geq k$ by the choice of $I$. It follows that there are at least
$|G|-2-\sum_{u\in I}|N[u]|\geq n-2-ks\geq\frac{n-1}{2}$ vertices
in $V(G)\backslash \{x,y\}$ of degree at least $k$. The result follows from
Theorem \ref{Thm:strengthening-ErdosGallai}.
\end{proof}

Theorem \ref{Thm:WoodallConj} can imply two classical
theorems and a generalization
under an independent set condition.

\begin{thm}[Dirac \cite{D52}]\label{Thm:Dirac-52}
Let $G$ be a 2-connected graph on $n$ vertices. If the degree of every
vertex is at least $k$, then $c(G)\geq \min\{n,2k\}$.
\end{thm}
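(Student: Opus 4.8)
The plan is to derive this circumference bound directly from Theorem \ref{Thm:WoodallConj}, splitting into the two regimes $n\geq 2k$ and $n<2k$. Under the hypothesis, every one of the $n$ vertices has degree at least $k$, so the number of vertices of degree at least $k$ is exactly $n$; the whole argument is about comparing this count $n$ against the Woodall threshold $\frac{n}{2}+k$.

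First I would dispose of the case $n\geq 2k$, where $\min\{n,2k\}=2k$. The key observation is that $n\geq 2k$ is equivalent to $n\geq \frac{n}{2}+k$. Hence the $n$ vertices of degree at least $k$ already furnish at least $\frac{n}{2}+k$ such vertices, and Theorem \ref{Thm:WoodallConj} applies verbatim to give $c(G)\geq 2k=\min\{n,2k\}$. This is the clean half of the argument and invokes nothing beyond Woodall's theorem.

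The remaining case $n<2k$ demands $c(G)\geq n$, i.e.\ that $G$ be Hamiltonian, and this is where the subtlety lies. The natural move is to reapply Theorem \ref{Thm:WoodallConj} with the reduced parameter $k':=\lfloor n/2\rfloor$. Since $n<2k$ forces $k>\frac{n}{2}\geq k'$, every vertex still has degree at least $k'$; and because $k'\leq \frac{n}{2}$ we have $n\geq \frac{n}{2}+k'$, so all $n$ vertices qualify and Theorem \ref{Thm:WoodallConj} yields $c(G)\geq 2k'=2\lfloor n/2\rfloor$. When $n$ is even this equals $n$ and we are finished.

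The genuine obstacle is the parity issue for odd $n$: then $2\lfloor n/2\rfloor=n-1$, so this route falls one edge short of a Hamilton cycle, and in fact Theorem \ref{Thm:WoodallConj} cannot reach it, since attaining $c(G)\geq n$ through it would require more than $\frac{n}{2}+\lceil n/2\rceil>n$ vertices of the appropriate degree, exceeding the $n$ available. To close this last corner I would argue directly: for odd $n<2k$ the minimum degree satisfies $\delta(G)\geq k\geq \frac{n+1}{2}>\frac{n}{2}$, which is precisely the classical Dirac/Ore threshold forcing a Hamilton cycle, whence $c(G)=n$; alternatively one can upgrade the length-$(n-1)$ cycle produced above to a spanning cycle by a short rotation-and-insertion argument exploiting the surplus degree. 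Thus all the real work concentrates in the odd-$n$, $n<2k$ subcase, while everything else is an immediate consequence of Theorem \ref{Thm:WoodallConj}.
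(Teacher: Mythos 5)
Your derivation is correct. The paper itself gives no argument for Theorem \ref{Thm:Dirac-52}: it is merely listed as one of the classical results that Theorem \ref{Thm:WoodallConj} ``can imply,'' so there is no written proof to compare against; your case $n\geq 2k$ is surely the intended deduction (all $n$ vertices have degree at least $k$ and $n\geq\frac{n}{2}+k$). The real value of your write-up is that you correctly isolate the one genuine subtlety the paper glosses over: since the hypothesis of Theorem \ref{Thm:WoodallConj} forces $n\geq 2k$ (as its proof notes, $\frac{n}{2}+k\leq n$), that theorem alone can never certify a Hamilton cycle when $n$ is odd, and your parameter substitution $k'=\lfloor n/2\rfloor$ only reaches $c(G)\geq n-1$. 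Both of your proposed repairs are sound: invoking the classical $\delta(G)\geq k>\frac{n}{2}$ Hamiltonicity threshold is legitimate (it is an independent classical fact, not circular with the circumference statement being proved, though it does import Dirac's other theorem), and the insertion argument is self-contained and preferable in spirit --- the missing vertex $v$ has at least $k\geq\frac{n+1}{2}>\frac{n-1}{2}$ neighbours on the $(n-1)$-cycle, so two of them are consecutive and $v$ can be inserted. So your proof is complete and, if anything, more careful than the paper's unstated one.
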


\begin{thm}[\rm {Erd\H{o}s, Gallai \cite[pp.~344]{EG59}, Bondy \cite{B71}}]\label{Thm:Bondy}
Let $G$ be a 2-connected graph on $n$ vertices. If the degree of every
vertex other than one vertex is at least $k$, then $c(G)\geq \min\{n,2k\}$.
\end{thm}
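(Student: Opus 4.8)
The plan is to run the longest-cycle/Fan-Lemma scheme used in the proof of Theorem~\ref{Thm:WoodallConj}, carefully bookkeeping the single exceptional vertex. Write $v$ for the vertex whose degree may be below $k$; every other vertex has degree at least $k$. If in fact $d(v)\ge k$ as well, then every vertex has degree at least $k$ and Theorem~\ref{Thm:Dirac-52} already yields $c(G)\ge\min\{n,2k\}$, so I may assume $d(v)\le k-1$. If $G$ is Hamiltonian then $c(G)=n\ge\min\{n,2k\}$ and we are done; hence assume $G$ is non-Hamiltonian and fix a longest cycle $C$, so that $G-C\neq\emptyset$. Since $\min\{n,2k\}\le 2k$, it suffices in almost every configuration to produce a cycle of length at least $2k$.

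The main observation is that the exceptional vertex is absorbed by the component structure off $C$. Let $H$ be any component of $G-C$ not containing $v$. Then \emph{every} vertex of $H$ has degree at least $k$, so all $|H|$ of its vertices are of degree at least $k$, and $|H|\ge\frac{|H|+1}{2}$; by Theorem~\ref{Thm:Woodall-FanLemma} there is an $(H,C)$-fan with at least $k$ edges, whence Lemma~\ref{Lem:wellknown} gives $c(G)\ge 2k\ge\min\{n,2k\}$. The same argument applies verbatim when $v\in V(C)$, using any component of $G-C$. Consequently I may assume $v\notin V(C)$ and that $G-C$ is a \emph{single} component $H$ with $v\in V(H)$, so that all but one vertex of $H$ has degree at least $k$. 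If $|H|\ge 3$ then $H$ still has at least $|H|-1\ge\frac{|H|+1}{2}$ vertices of degree at least $k$, and the same two results finish the proof.

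Two genuinely small configurations remain. If $|H|=2$, write $H=\{v,w\}$ with $vw\in E(G)$; since $d(w)\ge k$ and $N(w)\subseteq\{v\}\cup V(C)$ we get $d_C(w)\ge k-1$, while 2-connectivity forces $v$ to have a neighbour $u$ on $C$. Taking the star of edges from $w$ to its $C$-neighbours and replacing the terminus $u$ by the length-two path $wvu$ produces an $(H,C)$-fan with at least $(k-1)+1=k$ edges, so Lemma~\ref{Lem:wellknown} again gives $c(G)\ge 2k$. If $|H|=1$ then $|C|=n-1$; when $n>2k$ this already gives $c(G)\ge n-1\ge 2k$, so we are done. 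The only remaining case is $|H|=1$ together with $n\le 2k$, where $\min\{n,2k\}=n$ and we must actually produce a Hamilton cycle.

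This last case is the crux. Here $C$ is a Hamilton cycle of $G-v$, the vertex $v$ has at least two neighbours on $C$, and every vertex of $C$ has degree at least $k\ge n/2$. If two neighbours of $v$ are consecutive on $C$ I simply insert $v$ between them to obtain a Hamilton cycle; otherwise I pick two non-consecutive neighbours $u_a,u_b$ of $v$, cut $C$ at the edges leaving $u_a$ and $u_b$, and reroute through $v$ to form a Hamilton \emph{path} $Q$ of $G$ whose two ends are the successors $u_a^{+},u_b^{+}$, both of degree at least $k\ge n/2$. Applying the standard rotation/crossing argument to the spanning path $Q$ (whose end-degrees sum to at least $2k\ge n$) produces a Hamilton cycle, contradicting non-Hamiltonicity. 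I expect the construction of this Hamilton path with \emph{high-degree} endpoints, together with the verification that the $|H|\le 2$ fans carry at least $k$ edges, to be the only steps needing care; the bulk of the argument is the clean reduction in which every off-cycle component avoiding $v$ immediately triggers Theorem~\ref{Thm:Woodall-FanLemma} and Lemma~\ref{Lem:wellknown}.
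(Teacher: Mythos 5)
Your proof is correct, but it takes a different and in fact more self-contained route than the paper. The paper offers no written proof of Theorem \ref{Thm:Bondy}: it simply asserts that Theorem \ref{Thm:WoodallConj} implies it. That black-box derivation is immediate only when $n\geq 2k+2$, since only then do the $n-1$ vertices of degree at least $k$ satisfy $n-1\geq\frac{n}{2}+k$; the range $n\leq 2k+1$, where the conclusion amounts to near-Hamiltonicity, is left unaddressed. You instead re-run the internal machinery of the proof of Theorem \ref{Thm:WoodallConj} --- Theorem \ref{Thm:Woodall-FanLemma} together with Lemma \ref{Lem:wellknown} applied to the components of $G-C$ --- while tracking the one exceptional vertex $v$. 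Every component of $G-C$ avoiding $v$, and every component of order at least $3$ containing $v$, still meets the hypothesis of Theorem \ref{Thm:Woodall-FanLemma}; your hand-built fan for $|H|=2$ (rerouting one spoke of the star at $w$ through $v$) does carry at least $k$ edges; and in the residual case $|H|=1$, $n\leq 2k$, the Hamilton path with both ends of degree at least $\frac{n}{2}$ closes to a Hamilton cycle by the classical crossing argument. What your approach buys is precisely the small-$n$ regime that the paper's one-line assertion glosses over; what it costs is redoing the component bookkeeping rather than quoting Theorem \ref{Thm:WoodallConj} once. Two cosmetic points: for $k\leq 2$ the statement is trivial but your $|H|=2$ fan could have fewer than two termini, so that range should be dismissed at the outset; and the final step invokes the standard lemma that a Hamilton path whose end-degrees sum to at least $n$ extends to a Hamilton cycle, which deserves a one-line proof or citation since the paper does not record it.
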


\begin{thm}\label{Thm:sigmak+1}
Let $k,s\geq 1$ and $G$ be a $2$-connected graph on
$n\geq 2k(s+1)$ vertices. If $\max\{d(v):v\in S\}\geq k$
for any independent set $S\subset V(G)$ with $|S|=s+1$,
then $c(G)\geq 2k$.
\end{thm}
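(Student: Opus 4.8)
The plan is to deduce this from Woodall's conjecture (Theorem \ref{Thm:WoodallConj}) by a domination-counting argument, exactly parallel to the path version proved just above. The independent set hypothesis is really a statement about the \emph{low-degree} vertices: writing $L:=\{v\in V(G):d(v)<k\}$ for the set of vertices of degree less than $k$, I observe that no $(s+1)$-element independent set can lie entirely inside $L$, since such a set would violate the assumption $\max\{d(v):v\in S\}\geq k$. Hence the independence number of $G[L]$ is at most $s$.

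First I would fix a \emph{maximal} independent set $I\subseteq L$. By the observation above, $|I|\leq s$. By maximality, every vertex of $L\setminus I$ has a neighbor in $I$, so $L\subseteq\bigcup_{u\in I}N[u]$. Since each $u\in I$ lies in $L$, we have $|N[u]|=d(u)+1\leq k$, and therefore $|L|\leq\sum_{u\in I}|N[u]|\leq sk$. The number of vertices of degree at least $k$ is then $n-|L|\geq n-sk$.

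Next I would convert this into the threshold required by Theorem \ref{Thm:WoodallConj}. Using the hypothesis $n\geq 2k(s+1)=2ks+2k$, the inequality $n-sk\geq\frac{n}{2}+k$ holds, since it rearranges to $\frac{n}{2}\geq k(s+1)$, i.e.\ to $n\geq 2k(s+1)$. Thus $G$ has at least $\frac{n}{2}+k$ vertices of degree at least $k$, and Theorem \ref{Thm:WoodallConj} yields $c(G)\geq 2k$.

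I do not expect a genuine obstacle: the entire difficulty has been absorbed into the proof of Woodall's conjecture, and what remains is a clean counting argument. The only points requiring a little care are the use of a \emph{maximal} (rather than maximum) independent set, which is exactly what guarantees that $I$ dominates all of $L$, and the verification that the arithmetic $n-sk\geq\frac{n}{2}+k$ matches the threshold $n\geq 2k(s+1)$ in the hypothesis; in fact these two statements are equivalent, which is precisely what makes the lower bound on $n$ the natural one.
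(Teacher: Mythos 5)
Your proposal is correct and follows essentially the same route as the paper: both take a maximal independent set $I$ of vertices of degree less than $k$, bound $|I|\leq s$ from the hypothesis, use domination to get $|L|\leq\sum_{u\in I}|N[u]|\leq ks$, and then verify $n-ks\geq\frac{n}{2}+k$ so that Theorem \ref{Thm:WoodallConj} applies. The only (cosmetic) difference is that you phrase the count via the set $L$ of low-degree vertices while the paper counts vertices outside $\bigcup_{u\in I}N[u]$ directly.
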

\begin{proof}
Let $I$ be a maximal independent set such that for every vertex $u\in I$, $d(u)<k$.
By condition we have $|I|\leq s$. For any vertex $v\notin\bigcup_{u\in I}N[u]$, we have $d(v)\geq k$ by the
choice of $I$. It follows that there are at least
$|G|-\sum_{u\in I}|N[u]|\geq n-ks\geq\frac{n}{2}+k$ vertices of degree at least $k$.
Theorem \ref{Thm:sigmak+1} follows from Theorem \ref{Thm:WoodallConj}.
\end{proof}

Theorem \ref{Thm:sigmak+1}
implies Fournier and Fraisse's theorem (for large graphs) which was originally
conjectured by Bondy \cite{B81}.

\begin{thm}[Fournier, Fraisse \cite{FF85}]\label{Thm:Bondyconj}
Let $G$ be an $s$-connected graph on $n$ vertices where $s\geq 2$. If the
degree sum of any independent set of size $s+1$ is at least $m$, then
$c(G)\geq\min\{\frac{2m}{s+1},n\}$.
\end{thm}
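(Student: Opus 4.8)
The plan is to deduce this from Theorem \ref{Thm:sigmak+1} by trading the degree-sum hypothesis for the maximum-degree hypothesis that Theorem \ref{Thm:sigmak+1} requires. I would set $k:=\lceil \frac{m}{s+1}\rceil$. If $S$ is any independent set with $|S|=s+1$, then the hypothesis gives $\sum_{v\in S}d(v)\geq m$, so the largest degree in $S$ is at least the average $\frac{m}{s+1}$; since degrees are integers this forces $\max\{d(v):v\in S\}\geq\lceil\frac{m}{s+1}\rceil=k$. Thus $G$ satisfies exactly the independent-set condition of Theorem \ref{Thm:sigmak+1} for this value of $k$, and the case where $G$ has no independent set of size $s+1$ is covered automatically, since that condition is then vacuous (as in the proof of Theorem \ref{Thm:sigmak+1}, one still gets a maximal low-degree independent set of size at most $s$). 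Because $s\geq 2$, the graph $G$ is in particular $2$-connected, and we may assume $m\geq 1$ so that $k\geq 1$; hence all hypotheses of Theorem \ref{Thm:sigmak+1} are in place except possibly the order bound.

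The one genuine requirement of Theorem \ref{Thm:sigmak+1} that is not automatic is $n\geq 2k(s+1)$, and this is precisely the ``large graph'' restriction. First I would record that whenever this bound holds, the target simplifies: from $k\geq\frac{m}{s+1}$ we get $2k(s+1)\geq 2m\geq\frac{2m}{s+1}$, so $n\geq 2k(s+1)$ already yields $n\geq\frac{2m}{s+1}$ and hence $\min\{\frac{2m}{s+1},n\}=\frac{2m}{s+1}$. Then Theorem \ref{Thm:sigmak+1} gives $c(G)\geq 2k$, and since $2k=2\lceil\frac{m}{s+1}\rceil\geq\frac{2m}{s+1}$ we conclude $c(G)\geq\frac{2m}{s+1}=\min\{\frac{2m}{s+1},n\}$, as desired.

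The main obstacle is exactly the regime $n<2k(s+1)$, which this argument cannot reach: Theorem \ref{Thm:sigmak+1} carries the order bound $n\geq 2k(s+1)$ and offers no information below it. Consequently the short reduction above establishes Theorem \ref{Thm:Bondyconj} only for sufficiently large $n$; covering all $n$, in particular the dense, nearly Hamiltonian range where $\frac{2m}{s+1}$ is close to $n$, seems to require the original, more delicate analysis of Fournier and Fraisse rather than a black-box appeal to Theorem \ref{Thm:sigmak+1}. This is why the claim is phrased as recovering their theorem for large graphs.
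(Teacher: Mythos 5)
Your reduction is exactly the one the paper intends: the paper gives no written proof of Theorem~\ref{Thm:Bondyconj}, asserting only that Theorem~\ref{Thm:sigmak+1} implies it ``for large graphs,'' and your argument --- taking $k=\lceil m/(s+1)\rceil$, using averaging to convert the degree-sum hypothesis into the max-degree hypothesis, and checking that $n\geq 2k(s+1)$ forces $\min\{\frac{2m}{s+1},n\}=\frac{2m}{s+1}\leq 2k$ --- is precisely that derivation, carried out correctly. Your closing caveat that the regime $n<2k(s+1)$ is out of reach matches the paper's own qualification, so there is nothing missing relative to what the authors claim.
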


\section{Proof of Theorem \ref{Thm:strengthening-ErdosGallai}}\label{Sec:MainThm}
In this section, we prove Theorem \ref{Thm:strengthening-ErdosGallai}.
We first introduce the concept of \emph{Kelmans operation} and
prove a lemma.

Let $G$ be a graph and $u,v \in V(G)$. A new graph $G'$
is a \emph{Kelmans graph} of $G$ (from $u$ to $v$), denoted by
$G':=G[u\rightarrow v]$, if $V(G')=V(G)$ and
$E(G')=\left(E(G)\backslash \{uw:w\in N[u]\backslash N[v]\}\right)\cup \{vw:w\in N[u]\backslash N[v]\}.$
The operation was originally studied by Kelmans in \cite{K81}.
It is also called edge-switching and is a powerful tool for
solving problems on long cycles and cycle covers of graphs
(e.g. \cite{F02,MN20}).

Let $G$ and $G'$ be two graphs of order $n$.
Assume $\tau(G)=(d_1,d_2,\ldots,d_n)$ and $\tau(G')=(d'_1,d'_2,\ldots,d'_n)$
are non-increasing degree sequences of $G$ and $G'$, respectively.
If there exists an integer $j$ such that $d_k=d'_k$ for $1 \leq k \leq j-1$
and $d_j>d'_{j}$, then we say that $\tau(G)$ is \emph{larger than} $\tau(G')$
and denote it by $\tau(G)>\tau(G')$.

We have the following lemma.
\begin{lemma}\label{Lemma:Kelmans}\footnote{This lemma was also discovered by Hehui
Wu \cite{W}, independently.}
Let $G$ be a graph, $x,y,u$ be distinct vertices of $G$, and
$v\in N(u)$ (possibly $v\in\{x,y\}$). Let $G':=G[u\rightarrow v]$. \\
(i) If neither $N[u]\subseteq N[v]$ nor $N[v]\subseteq N[u]$, then
$\tau(G')>\tau(G)$.\\
(ii) If $G'$ has an $(x,y)$-path of length at least $k$, then so
does $G$.
\end{lemma}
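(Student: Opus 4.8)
The plan is to prove the two parts separately, since they concern different aspects of the Kelmans operation. For part (i), I would work directly with the degree sequences. Recall that $G' = G[u \to v]$ moves every edge $uw$ with $w \in N[u] \setminus N[v]$ over to become $vw$. The key observation is that this operation \emph{preserves the degree of every vertex except $u$ and $v$}: a vertex $w \notin \{u,v\}$ that was adjacent only to $u$ becomes adjacent only to $v$, and a vertex adjacent to both keeps both neighbors, so $d_{G'}(w) = d_G(w)$ throughout. Writing $a = |N[u] \setminus N[v]|$ and $b = |N[v] \setminus N[u]|$, a direct count shows that the operation transfers the private neighbors of $u$ to $v$, so $d_{G'}(v) = d_G(v) + a - [\,uv \in E\,]$-type adjustments and $d_{G'}(u) = d_G(u) - a + (\text{symmetric terms})$. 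The hypothesis that neither $N[u] \subseteq N[v]$ nor $N[v] \subseteq N[u]$ is precisely the statement that both $a \geq 1$ and $b \geq 1$. I would then verify that $\{d_{G'}(u), d_{G'}(v)\}$ majorizes $\{d_G(u), d_G(v)\}$ in the strict sense: the larger of the two new degrees strictly exceeds the larger of the two old degrees. Because all other entries of the sequence are unchanged, when we re-sort into non-increasing order the first point of difference will show $\tau(G')$ strictly larger, giving $\tau(G') > \tau(G)$.

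For part (ii), I would argue by a path-transformation (rerouting) argument. Let $P$ be an $(x,y)$-path of length at least $k$ in $G'$. If $P$ uses no edge of the form $vw$ with $w \in N[u]\setminus N[v]$ (the edges newly created by the operation), then $P$ already lives in $G$ and we are done, since all other edges of $G'$ are edges of $G$. Otherwise $P$ uses one or more of these new edges at $v$. Since $v$ has degree two on the path (or one, if $v$ is an endpoint), $P$ can contain at most two such new edges, and I would case-split on how many new edges incident to $v$ appear on $P$ and whether $u$ itself lies on $P$. The idea is to reroute $P$ through $u$: a new edge $vw$ corresponds to an original edge $uw$, so where $P$ travels $\cdots vw \cdots$ we would like to substitute a walk through $u$ using $uw \in E(G)$.

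The main obstacle, and the step demanding the most care, is ensuring that the rerouted object is still a \emph{path} (no repeated vertices) and that its length does not drop below $k$. The delicate situation is when $P$ passes through $v$ using two new edges $w_1 v w_2$ while also visiting $u$ elsewhere, so that naively replacing both new edges by $u$ would use $u$ twice. I expect the resolution to hinge on the fact that $uv \in E(G)$ (since $v \in N(u)$): this extra edge lets me splice the two fragments together without loss. Concretely, I would show that in each case one can build an $(x,y)$-path $P'$ in $G$ with $|E(P')| \geq |E(P)| \geq k$, using the original edges $uw_i$, the segments of $P$ common to both graphs, and the edge $uv$ when needed to reconnect. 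Verifying that every case yields a genuine path of length at least $k$ is the crux of the argument; the degree bookkeeping in part (i) is routine by comparison.
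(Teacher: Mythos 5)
Your part (i) is correct and matches the paper's argument: all degrees outside $\{u,v\}$ are preserved, the private neighbors of $u$ migrate to $v$, and the new maximum of the pair strictly exceeds the old one, which forces $\tau(G')>\tau(G)$ at the first differing position. (The paper streamlines the bookkeeping by noting $G[u\rightarrow v]\cong G[v\rightarrow u]$ and assuming $d_G(u)\le d_G(v)$, but your computation is equivalent.)

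In part (ii) you have the right skeleton (reroute $P$ through $u$, case-split on whether $u\in V(P)$ and whether $v\in\{x,y\}$), and you correctly isolate the hard case, but the tool you propose for it is the wrong one and provably cannot work. Suppose $x,\dots,v^-,v,v^+,\dots,u^-,u,u^+,\dots,y$ is the order on $P$ with both $vv^-$ and $vv^+$ absent from $E(G)$. Your declared toolkit is: the three $P$-segments $A=P[x,v^-]$, $B=P[v^+,u^-]$, $C=P[u^+,y]$, the recovered edges $uv^-,uv^+$, the surviving edges $uu^-,uu^+$, and the edge $uv$. To reassemble an $(x,y)$-path of length at least $|E(P)|=|A|+|B|+|C|+4$ you must use at least four connecting edges between the pieces $A,\{v\},B,\{u\},C$; but every connector in your list is incident to $u$, and a path uses at most two edges at $u$. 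So with these tools alone the best you can achieve is length $|E(P)|-2$, and the edge $uv$ does not rescue you: $v$'s only guaranteed $G$-neighbour among the listed vertices is $u$, so $v$ becomes a dead end. The fact you actually need is the \emph{other half} of the Kelmans operation: since $E(G')$ deletes exactly the edges $uw$ with $w\in N[u]\setminus N[v]$, every $G'$-neighbour of $u$ lies in $N_G[v]$; in particular $u^-v,u^+v\in E(G)$. These two connectors are not incident to $u$, and with them the paper closes the case via $P[x,v^-]\,v^-u\,v^+\,P[v^+,u^-]\,u^-v\,u^+\,P[u^+,y]$ (and the analogous splices in the mixed cases). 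Without stating and using $N_{G'}(u)\subseteq N_G[v]$, your case analysis cannot be completed, so this is a genuine gap rather than a presentational one.
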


\begin{proof}
(i) Note that $G[u\rightarrow v]$ is isomorphic to $G[v\rightarrow u]$.
By the assumption, we have $N(u)\backslash N(v)\neq \emptyset$ and $N(v)\backslash N(u)\neq \emptyset$.
Without loss of generality, we assume  $d_G(u)\leq d_G(v)$. Then
$d_{G'}(v)>d_G(v)\geq d_G(u)>d_{G'}(u)$ and the degrees of all other
vertices are not changed. It follows $\tau(G')>\tau(G)$.

(ii) Assume $P$ is an $(x,y)$-path of length at least $k$ in $G'$. If
$P$ does not pass through $v$, then it is also a path in $G$ and we
are done. Thus we only consider the case where $P$ passes
through $v$. Let $v^-$ and $v^+$ be the predecessor and successor
of $v$ on $P$. If $v=x$, then we do not define $v^-$; if $v=y$,
then we do not define $v^+$. For
$w \in N_{G'}(v)\setminus N_{G}(v)$, we note $w \in N_G(u)$
by the definition of $G'$. Suppose $P$
does not pass through $u$. If $v=x$, then
\[
P'=\left\{\begin{array}{ll}
P,                        & \mbox{if } vv^+\in E(G);\\
vuv^+P[v^+,y],             & \mbox{if } vv^+\notin E(G)
\end{array}\right.
\]
is an $(x,y)$-path in $G$ of length at least $k$. If $v=y$, then we can prove it similarly.
Thus, we can assume $v \notin \{x,y\}$. Set
\[
P'=\left\{\begin{array}{ll}
P,                        & \mbox{if } vv^-\in E(G) \mbox{ and } vv^+ \in E(G);\\
P[x,v]vuv^+P[v^+,y],      & \mbox{if }vv^-\in E(G)\mbox{ and } vv^+ \notin E(G);\\
P[x,v^-]v^-uvP[v,y],      & \mbox{if }vv^- \notin E(G)\mbox{ and } vv^+ \in E(G);\\
P[x,v^-]v^-uv^+P[v^+,y],  & \mbox{if }vv^- \notin E(G)\mbox{ and } vv^+ \notin E(G).
\end{array}\right.
\]
Clearly $P'$ is an $(x,y)$-path of $G$ of length at least
$k$.

We now consider the case where $P$ also
passes through $u$.  Let $u^-,u^+$ be the
predecessor and successor of $u$ on $P$, respectively.
Suppose that $x=v$.
Then
$$P'=\left\{\begin{array}{ll}
  P,                        & \mbox{if } vv^+\in E(G);\\
  xu^-P[u^-,v^+]v^+uP[u,y],      & \mbox{if }vv^+\notin E(G)
\end{array}\right.$$
is an $(x,y)$-path of length at least $k$ in $G$. The case
of $y=v$ can be solved similarly. Thus, $v\notin \{x,y\}$.
In the following, we assume $x,v,u,y$ appear on
$P$ in order. We define the path $P'$ as follows:
$$P'=\left\{\begin{array}{ll}
  P,                        & \mbox{if } vv^-\in E(G)\mbox{ and } vv^+\in E(G);\\
  P[x,v]vu^-P[u^-,v^+]v^+uP[u,y],      & \mbox{if }vv^-\in E(G)\mbox{ and }vv^+\notin E(G);\\
  P[x,v^-]v^-uP[u,v]vu^+P[u^+,y],      & \mbox{if }vv^-\notin E(G)\mbox{ and }vv^+\in E(G);\\
  P[x,v^-]v^-uv^+P[v^+,u^-]u^-vu^+P[u^+,y],  & \mbox{if }vv^-\notin E(G)\mbox{ and }vv^+\notin E(G).
\end{array}\right.$$
Then $P'$ is an $(x,y)$-path we seek. The proof is complete.
\end{proof}

The following lemma will be used frequently
in the proof of Theorem \ref{Thm:strengthening-ErdosGallai}.

\begin{lemma}\label{Lem:2connected2}
(i) If $G$ is 2-connected and $G-v$ is separable for
a vertex $v\in V(G)$, then every end-block of $G-v$ has
at least one inner-vertex that is adjacent to $v$ in $G$.

\noindent
(ii) Assume that $G$ is separable in which $v\in V(G)$ is not a cut-vertex.
Let $B_1,B_2,\ldots,B_t$ be all end-blocks of $G$ not containing $v$.
Let $G'$ be obtained from $G$ by adding
edges $vv_i$, where $v_i$ is an inner-vertex of $B_i$, $i\in [1,t]$.
Then $G'$ is 2-connected.

\noindent
(iii) Assume that $G$ is separable. Let $B_1,B_2,\ldots,B_t$ be all end-blocks of $G$.
Let $G'$ be obtained from $G$ by adding a new vertex $v$ and
new edges $vv_i$, where $v_i$ is an inner-vertex of $B_i$,
$i\in [1,t]$. Then $G'$ is 2-connected.

\noindent (iv) Assume that $G$ is 2-connected and $\{x,y\}$ be a cut
of $G$. Let $H_1,\ldots,H_t$, $t\geq 1$, be some (not necessary all)
components of $G-\{x,y\}$, and $S=\bigcup_{i=1}^t V(H_i)$. If $xy\in
E(G)$, then let $G'=G[S\cup\{x,y\}]$; if $xy\notin E(G)$, then let
$G'$ be obtained from $G[S\cup\{x,y\}]$ by adding the edge $xy$.
Then $G'$ is 2-connected.

\noindent (v) Let $G$ be 2-connected. Let $v\in V(G)$ such that
its neighborhood is a clique. If $G-v$ has order at least 3, then
$G-v$ is 2-connected.
\end{lemma}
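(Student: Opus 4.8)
The plan is to handle all five parts through one recurring idea: to certify $2$-connectivity I locate, for each candidate cut-vertex $c$, a \emph{reconnecting structure}, and the natural such structure is always an inner-vertex of an end-block. I will use repeatedly two elementary facts about a connected separable graph: distinct end-blocks have disjoint sets of inner-vertices, and every edge lies inside a single block (so a non-cut-vertex belongs to a unique block, and its incident edges stay in that block).

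For (i) I would argue by contradiction. Let $B$ be an end-block of $G-v$ with cut-vertex $c$, and suppose no inner-vertex of $B$ is adjacent to $v$ in $G$. Since $B$ is a block of $G-v$, every $G-v$-neighbour of an inner-vertex of $B$ lies in $V(B)$; together with the assumption that none of these inner-vertices sees $v$, this shows the inner-vertices of $B$ have all their $G$-neighbours inside $V(B)=\{c\}\cup(\text{inner-vertices})$. Hence $\{c\}$ separates them from $v$ and from the remaining end-blocks (which exist because $G-v$ is separable), contradicting the $2$-connectivity of $G$.

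For (ii) and (iii) I would prove $2$-connectivity by checking that $G'-c$ is connected for every vertex $c$. For the vertex $v$ itself, $G'-v$ is $G$ in case (iii) and $G-v$ in case (ii), both connected (the latter since $v$ is assumed not to be a cut-vertex). For $c\in V(G)$ I would invoke the block-cut-tree fact that \emph{every component $D$ of $G-c$ contains an inner-vertex of some end-block of $G$}; such an inner-vertex is joined to $v$ in $G'$, so every component of $G-c$ reattaches to $v$ and $G'-c$ is connected. In (ii) one extra point must be verified: the reattaching end-block differs from the unique block containing $v$, so that its inner-vertex really did receive a new edge; this holds because a block is $2$-connected (or $K_2$), so deleting $c\neq v$ cannot strand the interior of $v$'s block away from $v$. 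This block-cut-tree fact is the main obstacle, and the step on which I expect to spend the most care.

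For (iv) I would test each vertex deletion directly, using that $xy\in E(G')$. Deleting $x$ (resp.\ $y$) leaves every $H_i$ hanging off $y$ (resp.\ $x$), since $2$-connectivity of $G$ forces each $H_i$ to have a neighbour of both $x$ and $y$. The only substantive case is deleting an inner vertex $w\in H_i$: each component $D$ of $H_i-w$ satisfies $N_G(V(D))\subseteq\{w,x,y\}$, and since $G$ is $2$-connected $D$ cannot be separated by $w$ alone, so $D$ has a neighbour in $\{x,y\}$ and reattaches to the connected pair $\{x,y\}$. Finally, for (v) I would combine (i) with the clique hypothesis: $G-v$ is connected because a $2$-connected graph has no cut-vertex; if $G-v$ had a cut-vertex it would be separable with at least two end-blocks, and by (i) each contains an inner-vertex adjacent to $v$, say $a_1\in B_1$ and $a_2\in B_2$ with $B_1\neq B_2$. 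As $a_1,a_2\in N(v)$ and $N(v)$ is a clique, $a_1a_2\in E(G-v)$; but this edge must lie in a single block, forcing $B_1=B_2$, a contradiction. The hypothesis that $G-v$ has order at least $3$ is precisely what lets me upgrade ``no cut-vertex'' to ``$2$-connected''.
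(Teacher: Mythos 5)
Your proof is correct, and for parts (i), (ii), (iii) it is essentially the paper's argument: (i) is the same one-line contradiction (the cut-vertex of the offending end-block would be a cut-vertex of $G$), and (ii)--(iii) are verified, as in the paper, by deleting each vertex $c$ and reattaching every component of $G-c$ to $v$ through an end-block. You are more explicit than the paper about the underlying block-tree fact, which is the right instinct; the one sentence worth adding there is that when $c$ is a cut-vertex of $G$ it is not an inner-vertex of any end-block, so $B_i-c$ is connected and the whole set of inner-vertices of the relevant end-block --- in particular the \emph{designated} vertex $v_i$ that actually received the new edge, not merely some inner-vertex --- lies in the component $D$; and when $c$ is not a cut-vertex, $G-c$ is connected and contains $v$ (case (ii)) or reattaches everything to $v$ directly (case (iii)), so nothing is needed. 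The genuine divergence is in (iv): the paper takes two arbitrary vertices of $G'$, puts them on a common cycle of $G$ by $2$-connectivity, and replaces the excursion outside $S\cup\{x,y\}$ by the edge $xy$ (legitimate because a cycle crosses the cut $\{x,y\}$ at most once), whereas you do a direct cut-vertex check using that each $H_i$, and each component of $H_i-w$, must see both of $x$ and $y$. Both are sound; the cycle argument is shorter on paper, while yours is more uniform with the other parts. For (v) the paper argues directly that a cut-vertex $u$ of $G-v$ yields a $2$-cut $\{u,v\}$ of $G$ and hence two nonadjacent neighbours of $v$ in distinct components of $G-\{u,v\}$; your route through part (i) and the uniqueness of the block containing an edge reaches the same non-edge in $N(v)$ and is equally valid.
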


\begin{proof}
(i) For an end-block $B$ of $G-v$ and a cut-vertex $u$ of $G-v$ contained in $B$,
if $N_{B-u}(v)=\emptyset$, then $u$ is a cut-vertex of $G$, a contradiction.

(ii) Choose $u\in V(G')$ arbitrarily. If $u$ is not a cut-vertex of $G$,
then $G-u$ is connected, and so $G'-u$ is connected. If $u$ is a
cut-vertex of $G$, then $u\neq v$. As $v$ is adjacent
to any end-block of $G$ not containing $v$ in $G'$, $v$ is adjacent to
any component of $G'-u$ not containing $v$. In this case, $G'-u$ is connected.
Thus, $G'$ is 2-connected.

(iii) For any vertex $u\in V(G')$, if $u=v$, then $G'-u=G$ is connected;
if $u\neq v$, then every component of $G-u$ has a vertex adjacent to $v$
in $G'$. In any case, $G'-u$ is connected. Thus $G'$ is 2-connected.

(iv) Choose $u,v\in V(G')$ arbitrarily. Since $G$ is 2-connected,
there is a cycle $C$ containing $u$ and $v$. If $V(C)\backslash
V(G')\neq\emptyset$, then $C$ passes through $x,y$. Let $P$ be the
segment of $C$ from $x$ to $y$ which is not in $G'$.
Using the edge $xy$ to replace $P$,
we get a cycle of $G'$ containing $u,v$. It follows that $G'$ has a
cycle containing any two vertices, and so $G'$ is 2-connected.

(v) If $G-v$ is separable, then $v$ is contained in a cut $\{u,v\}$
of $G$. Hence each component of $G-\{u,v\}$ has a neighbor of $v$,
which implies $N_G(v)$ is not a clique, a contradiction.
\end{proof}

Now we prove Theorem \ref{Thm:strengthening-ErdosGallai}.

\vspace{0.1cm}
\noindent
{\bf Proof of Theorem \ref{Thm:strengthening-ErdosGallai}.}
We prove the theorem by contradiction.
We choose $k$ to be the minimum integer for which there is a counterexample
to Theorem \ref{Thm:strengthening-ErdosGallai}. Let $G$ be such a counterexample
that:\\
(i) $|G|$ is minimized;\\
(ii) $|E(G)\backslash\{xy\}|$ is minimized, subject to (i);\\
(iii) the degree sequence $\tau(G)=(d_1,d_2,\ldots,d_n)$ is the largest,
subject to (i) and (ii).

We claim that $xy\in E(G)$; if not, let $G':=G+xy$ (in the usual meaning).
Note that if $G$ contains no $(x,y)$-path of length
at least $k$, then $G'$ contains no $(x,y)$-path of length
at least $k$. However, $G'$ satisfies (i)(ii) and $\tau(G')>\tau(G)$,
a contradiction.

In the following, we say that a vertex $v\in V(G)\backslash\{x,y\}$ is \emph{feasible}
in $G$ if $d_G(v)\geq k$; otherwise, it is called \emph{non-feasible}. (The
two vertices $x$ and $y$ are neither feasible nor non-feasible.)
We also say that $G$ has a feasible $(x,y)$-path, if the path
has length at least $k$.

\begin{claim}\label{Claim:2-cut} $\{x,y\}$ is not a cut of $G$.
\end{claim}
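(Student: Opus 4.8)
The plan is to argue by contradiction, exploiting the minimality of the counterexample $G$ together with the already-established fact that $xy\in E(G)$. Suppose, for contradiction, that $\{x,y\}$ is a cut of $G$, and let $H_1,H_2,\ldots,H_t$ (with $t\geq 2$) be the components of $G-\{x,y\}$. For each $i\in[1,t]$ I would form the subgraph $G_i:=G[V(H_i)\cup\{x,y\}]$. Since $xy\in E(G)$ and $G$ is $2$-connected, Lemma \ref{Lem:2connected2}(iv) (applied with the single chosen component $H_i$) guarantees that each $G_i$ is $2$-connected. Because $t\geq 2$, each $G_i$ is strictly smaller than $G$, i.e. $|G_i|<|G|$, which is exactly what lets the minimality hypothesis on $|G|$ bite.

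The first key observation is that feasibility is preserved under this decomposition. Every vertex $v\in V(H_i)$ has all of its $G$-neighbors inside $V(H_i)\cup\{x,y\}=V(G_i)$, since $\{x,y\}$ separates $H_i$ from the rest of $G$; hence $d_{G_i}(v)=d_G(v)$, and so $v$ is feasible in $G_i$ if and only if it is feasible in $G$. The second key observation is that any $(x,y)$-path contained in some $G_i$ is also an $(x,y)$-path of $G$; therefore, since $G$ has no feasible $(x,y)$-path, no $G_i$ can have one either. Thus each $G_i$ is a $2$-connected graph with $|G_i|<|G|$, the same value of $k$, and no $(x,y)$-path of length at least $k$.

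By the minimality of $|G|$ (for the fixed minimal $k$), each $G_i$ must fail the hypothesis of Theorem \ref{Thm:strengthening-ErdosGallai}; otherwise $G_i$ would itself be a smaller counterexample. Writing $n_i:=|V(H_i)|$, so that $|G_i|=n_i+2$, and letting $f_i$ be the number of feasible vertices of $G$ lying in $H_i$, this failure reads $f_i<\frac{|G_i|-1}{2}=\frac{n_i+1}{2}$, which for an integer $f_i$ forces $f_i\leq\frac{n_i}{2}$. Since $x$ and $y$ are neither feasible nor non-feasible, every feasible vertex of $G$ lies in exactly one $H_i$, so the total number of feasible vertices of $G$ is
$$
\sum_{i=1}^{t}f_i\ \leq\ \sum_{i=1}^{t}\frac{n_i}{2}\ =\ \frac{n-2}{2}\ <\ \frac{n-1}{2},
$$
contradicting the assumption that $G$ has at least $\frac{n-1}{2}$ feasible vertices. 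Hence $\{x,y\}$ is not a cut of $G$.

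I expect the argument to be essentially routine once the reduction is set up; the only points demanding care are verifying that the correct component-restriction keeps the graph $2$-connected (for which Lemma \ref{Lem:2connected2}(iv) is tailor-made) and confirming that inner-vertex degrees, and hence feasibility, are unchanged in $G_i$. The single genuinely load-bearing step is the counting inequality $f_i\leq n_i/2$ obtained from the failed hypothesis, whose summation yields $\frac{n-2}{2}$ and beats the threshold $\frac{n-1}{2}$ by exactly the slack the theorem leaves; if anything is delicate it is the integrality bookkeeping that turns $f_i<\frac{n_i+1}{2}$ into $f_i\leq\frac{n_i}{2}$.
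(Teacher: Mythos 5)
Your proposal is correct and follows essentially the same route as the paper: decompose $G$ along the cut $\{x,y\}$ into the $2$-connected pieces $G_i=G[V(H_i)\cup\{x,y\}]$, invoke the minimality of $|G|$ to cap the number of feasible vertices in each $V(H_i)$ at $|V(H_i)|/2$, and sum to get at most $\frac{n-2}{2}<\frac{n-1}{2}$ feasible vertices, a contradiction. The extra details you supply (the appeal to Lemma \ref{Lem:2connected2}(iv), the preservation of degrees inside each $G_i$, and the integrality step) are exactly the points the paper leaves implicit.
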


\begin{proof}
Suppose that $\{x,y\}$ is a cut of $G$. Let
$\mathcal{H}=\{H_1,H_2,\ldots,H_t\}$ be the set of components of
$G-\{x,y\}$. Let $S_i=V(H_i)$ and $G_i=G[S_i\cup\{x,y\}]$
where $i\in [1,t]$. Thus, $G_i$
is 2-connected for $i\in [1,t]$. If $S_i$
contains at least $\frac{|S_i|+1}{2}$ feasible vertices, then by
choice condition (i), there is an $(x,y)$-path of length at least
$k$ in $G_i$, and also in $G$, a contradiction. Therefore, $S_i$
contains at most $\frac{|S_i|}{2}$ feasible vertices for any $i\in
[1,t]$, and there are at most
$\sum_{i=1}^t\frac{|S_i|}{2}=\frac{|G|-2}{2}$ feasible vertices in
$V(G)$, a contradiction.
\end{proof}

\begin{claim}\label{Claim:kgeq5}
$k\geq 5$.
\end{claim}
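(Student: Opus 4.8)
The plan is to show that the minimal counterexample cannot have $k\le 4$; since $k$ was chosen minimal, this yields $k\ge 5$. Throughout write $H:=G-\{x,y\}$, which is connected by Claim~\ref{Claim:2-cut}, recall $xy\in E(G)$, and set $A:=N_H(x)$, $B:=N_H(y)$. The cases $k\in\{1,2\}$ need no work: as $G$ is $2$-connected, $x$ and $y$ lie on a common cycle, which already supplies an $(x,y)$-path of length at least $2\ge k$, so no counterexample exists. Note also that $2$-connectivity forces $A\ne\emptyset$ and $B\ne\emptyset$.

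For $k=3$ I would argue directly. If one can choose $a\in A$ and $b\in B$ with $a\ne b$, then, as $H$ is connected, there is an $(a,b)$-path $Q$ in $H$ of length at least $1$, and $xaQby$ is an $(x,y)$-path of length at least $3$, a contradiction. The only remaining possibility is $A=B=\{v\}$; but then $v$ is the unique $H$-neighbour of both $x$ and $y$, so $G-v$ separates $\{x,y\}$ from $V(H)\setminus\{v\}$ unless $V(H)=\{v\}$. Hence $n=3$ and $G=K_3$, whose only vertex besides $x,y$ has degree $2<3$, so the degree hypothesis fails. Thus $k=3$ admits no counterexample.

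The case $k=4$ is the crux. Suppose $G$ has no $(x,y)$-path of length at least $4$. First I would show $V(H)=A\cup B$: if some $w\in V(H)$ lay outside $A\cup B$, then $2$-connectivity yields two internally disjoint paths from $w$ to $x$ and to $y$, each of length at least $2$ since $w\notin N(x)\cup N(y)$, whose union is an $(x,y)$-path of length at least $4$. Since an $(x,y)$-path of length $\ell+2$ is exactly an $A$--$B$ path of length $\ell$ in $H$ (with distinct endpoints), the assumption is equivalent to: \emph{$H$ has no path of length at least $2$ between $A$ and $B$}. Writing $A':=A\setminus B$, $B':=B\setminus A$ and $M:=A\cap B$, the length-$2$ case of this already forces every vertex of $H$ of degree at least $2$ to have all its neighbours inside $A'$ or all inside $B'$ (``pure-left'' or ``pure-right''), and it forces every neighbour of $M$ to be a leaf of $H$. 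The decisive structural step is that $A'$ and $B'$ are \emph{independent} in $H$: an internal edge $a_1a_2$ of $A'$ would, by purity, trap the whole component of $H$ containing $a_1$ inside $A'$, contradicting that $H$ is connected while $B\ne\emptyset$ is disjoint from $A'$.

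With $A',B'$ independent, every feasible vertex $f$ (degree $\ge 4$, hence $d_H(f)\ge 2$) is pure-left or pure-right with all of its $\ge 2$ $H$-neighbours being leaves: this is immediate when $f\in M$, and when $f\in A'\cup B'$ it follows because the bipartite graph between $A'$ and $B'$ contains no $P_4$ (a $P_4$ is an $A$--$B$ path of length $3$), so its components are stars and the high-degree vertex $f$ must be a centre whose leaves have $H$-degree $1$. These leaves are non-feasible and, since a leaf has a unique $H$-neighbour, are owned by distinct feasible vertices. Letting $F$ be the set of feasible vertices, we get $|V(H)\setminus F|\ge 2|F|$, i.e.\ $n-2\ge 3|F|\ge \tfrac{3(n-1)}{2}$, which is absurd. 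This contradiction rules out $k=4$ and completes the claim. The main obstacle is precisely this structural analysis for $k=4$ — in particular establishing the independence of $A'$ and $B'$ from the connectivity of $H$, and pinning down the star structure that produces two private leaf-neighbours per feasible vertex; once these are in place the counting is immediate.
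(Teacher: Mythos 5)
Your proof is correct. The cases $k\le 3$ and the opening move for $k=4$ (every vertex of $H:=G-\{x,y\}$ lies in $N(x)\cup N(y)$, and $H$ is connected by Claim~\ref{Claim:2-cut}) coincide with the paper's. Where you diverge is in how the structure of $H$ is pinned down for $k=4$: the paper observes that $H$ can contain neither a cycle nor a path of length $3$ (either would extend, via Menger, to an $(x,y)$-path of length at least $4$), concludes that $H$ is the single star $K_{1,n-3}$, and is done because a star contributes at most one feasible vertex while at least two are required. You instead work with the partition $A'=N_H(x)\setminus N_H(y)$, $B'=N_H(y)\setminus N_H(x)$, $M=N_H(x)\cap N_H(y)$, prove that $A'$ and $B'$ are independent, show that the bipartite graph between them is $P_4$-free and hence a disjoint union of stars, and finish by charging two private leaf-neighbours to each feasible vertex, giving $n-2\ge 3|F|\ge \tfrac{3(n-1)}{2}$. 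Both routes are sound; the paper's is shorter because excluding cycles in $H$ collapses $H$ to one star globally, while yours needs only the local star structure around each feasible vertex and buys a cleaner counting inequality. One small point to make explicit in a full write-up: when you assert that the leaves of the star centred at a feasible $f\in A'$ have $H$-degree $1$, you must also exclude neighbours of those leaves inside $M$; this does follow from your earlier observation that every neighbour of $M$ is a leaf of $H$, but it deserves a sentence.
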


\begin{proof}
Since $G$ is 2-connected, $G$ has an $(x,y)$-path of length at
least 2. This solves the case of $k\leq 2$. Suppose $k=3$.
If $G$ has no $(x,y)$-path of length at least 3, then
$G$ is the complete 3-partite graph $K_{1,1,n-2}$ (recall that $xy\in E(G)$);
but no vertex in $V(G)\backslash\{x,y\}$ has degree at least 3,
a contradiction. This shows that $k\neq 3$.

Now assume $k=4$. If there is a vertex, say $u$, nonadjacent to $x$
and $y$, then any $(x,y)$-path passing through $u$ is of length at
least 4 (such a path exists by Menger's theorem). Thus, every vertex is adjacent to
either $x$ or $y$. By Claim \ref{Claim:2-cut}, $G-\{x,y\}$ is
connected. Suppose that $G-\{x,y\}$ contains a cycle, say $C$.
By Menger's theorem, we can choose $P_1,P_2$ as two vertex-disjoint
paths from $C$ to $x,y$, respectively, then $P_1\cup C\cup P_2$ contains an
$(x,y)$-path of length at least 4. If $G-\{x,y\}$ has a path $P$ of
length 3, let $u,v$ be the two end-vertices of $P$. If $ux,vy\in E(G)$
or $uy,vx\in E(G)$, then there is an $(x,y)$-path of
length 5. Thus, assume without loss of generality, that both
$u,v\in N(x)$. Thus $C=P[u,v]vxu$ is a cycle of length 5. Let $P'$ be a
path from $P$ to $y$. Then $P'\cup C$ contains an $(x,y)$-path of
length at least 4. Hence a maximal path of $G-\{x,y\}$ is of length
at most 2. We infer $G-\{x,y\}$ is a star $K_{1,n-3}$. The star contains
no feasible vertex for $n\leq 4$; and contains only one feasible
vertex for $n\geq 5$, a contradiction.
\end{proof}

\begin{claim}\label{Claim:nonfeaclique}
If $u$ is non-feasible, then $N(u)$ is a clique.
\end{claim}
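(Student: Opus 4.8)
The plan is to argue by contradiction against the maximality of the degree sequence, namely extremal condition (iii). Suppose some non-feasible vertex $u$ has a neighbourhood that is not a clique, and fix two non-adjacent neighbours $v,w\in N(u)$. Since $u\notin\{x,y\}$ and $v\in N(u)$, I may form the Kelmans graph $G'=G[u\rightarrow v]$ (possibly with $v\in\{x,y\}$, which Lemma \ref{Lemma:Kelmans} allows) and compare it with $G$. The goal is to show that $G'$ is again a $2$-connected counterexample satisfying (i) and (ii) but with $\tau(G')>\tau(G)$, contradicting the choice of $G$.

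First I would dispose of the invariants that come for free. A Kelmans switch leaves the vertex set and the total number of edges unchanged, and since $u\neq x,y$ the edge $xy$ is never moved; hence $|G'|=|G|$ and $|E(G')\setminus\{xy\}|=|E(G)\setminus\{xy\}|$, so (i) and (ii) persist. By Lemma \ref{Lemma:Kelmans}(ii), any $(x,y)$-path of length at least $k$ in $G'$ would yield one in $G$; as $G$ has none, neither does $G'$. Finally, under $G[u\rightarrow v]$ only $d(u)$ and $d(v)$ change: each moved neighbour simultaneously loses $u$ and gains $v$, so its degree is unaffected. The degree of $u$ can only drop, so $u$ remains non-feasible and is counted nowhere, while $d(v)$ can only grow; thus the number of feasible vertices does not decrease and the hypothesis $n_k\ge\frac{n-1}{2}$ of Theorem \ref{Thm:strengthening-ErdosGallai} is preserved.

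Next I would secure the strict inequality $\tau(G')>\tau(G)$. The non-neighbour $w$ gives $w\in N[u]\setminus N[v]$, so $N[u]\not\subseteq N[v]$ always holds. To obtain genuine incomparability of the two closed neighbourhoods (and hence $\tau(G')>\tau(G)$ by Lemma \ref{Lemma:Kelmans}(i)) I would first observe that some neighbour $b$ of $u$ satisfies $N[b]\not\subseteq N[u]$: otherwise every vertex of $N[u]$ sends all its edges into $N[u]$, so $N[u]$ is a union of components and, by connectivity, $N[u]=V(G)$; then $d(u)=n-1\ge k$, contradicting that $u$ is non-feasible (here I use $n\ge3$, so that $G$ must still contain a feasible vertex). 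Using this vertex $b$ together with the non-edge inside $N(u)$, I would select the switch pair so that the two closed neighbourhoods are incomparable, forcing $\tau(G')>\tau(G)$.

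The main obstacle is the remaining requirement that $G'$ be $2$-connected. A Kelmans switch can destroy $2$-connectivity: if $u$ and $v$ share no common neighbour, then $G[u\rightarrow v]$ leaves $u$ adjacent only to $v$. The extreme case is $d(u)=2$ with non-adjacent neighbours $v,w$; there the switch $G[u\rightarrow v]$ still increases $\tau$ (since $d(v)\ge2$ in a $2$-connected graph), yet $u$ becomes a vertex of degree $1$, so $G'$ fails to be a valid counterexample. In such configurations $\{v,w\}$ is a $2$-cut of $G$ with $\{u\}$ a component of $G-\{v,w\}$, and I would instead argue by a splitting-and-counting argument analogous to Claim \ref{Claim:2-cut}, using parts (ii)--(iv) of Lemma \ref{Lem:2connected2} to reassemble the pieces into a $2$-connected graph and either build a feasible $(x,y)$-path or contradict the feasible-vertex count. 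For the generic case I would refine the choice of the non-adjacent neighbour $v$ so that $u$ retains a common neighbour with $v$, guaranteeing $d_{G'}(u)\ge2$, and then invoke Lemma \ref{Lem:2connected2} to certify that no cut-vertex is created. Making this connectivity bookkeeping airtight, rather than the degree-sequence comparison, is where I expect the real difficulty to lie.
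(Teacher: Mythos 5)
Your overall strategy (apply a Kelmans switch at $u$ and contradict the extremal choice of $G$) is the paper's strategy too, but the proposal has a genuine gap at the step you wave through: guaranteeing $N[v]\not\subseteq N[u]$ for the vertex $v$ you switch to. Your existence argument only produces \emph{some} neighbour $b$ of $u$ with $N[b]\not\subseteq N[u]$; it does not produce one that also satisfies $N[u]\not\subseteq N[b]$, and no choice of switch pair need work. Concretely, let $N(u)=\{v,w,b_1,b_2\}$ with $vw\notin E(G)$, $N(v)=N(w)=\{u,b_1,b_2\}$, and $b_1,b_2$ adjacent to all of $N[u]$ and to vertices outside. Then $N[v],N[w]\subseteq N[u]$ (so switching to $v$ or $w$ merely swaps the roles of $u$ and $v$, giving an isomorphic graph with the same $\tau$), while $N[u]\subseteq N[b_i]$ (so switching to $b_i$ moves no edges at all). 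No Kelmans operation at $u$ increases the degree sequence here. The paper escapes this exact trap by invoking the edge-minimality condition (ii), which you never use: if some $v\in N(u)$ is non-feasible or lies in $\{x,y\}$, it deletes the edge $uv$ to get a counterexample with fewer edges; consequently every remaining neighbour $v$ of $u$ is feasible, hence $d(v)\geq k>d(u)$, which \emph{forces} $N[v]\not\subseteq N[u]$. Without that step your degree-sequence comparison does not go through.

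The second gap is the one you yourself flag: 2-connectivity of the switched graph. The paper does not ``refine the choice of $v$ so that $u$ retains a common neighbour with $v$'' (this is not always possible); instead it splits into the case $G-u$ 2-connected, where $G'-u\supseteq G-u$ is automatically 2-connected and a degree-1 vertex $u$ is simply deleted (shrinking $|G|$, which is also a legal win under condition (i)), and the case $G-u$ separable, which occupies roughly half the proof and requires the full block decomposition of $G-u$, Lemma \ref{Lem:2connected2}(i)--(iii), and a further appeal to edge-minimality. Your sketch for $d(u)=2$ (``a splitting-and-counting argument analogous to Claim \ref{Claim:2-cut}'') also does not transfer: that claim's splitting works because $x,y$ are the prescribed path endpoints, whereas here $\{v_1,v_2\}$ is an arbitrary $2$-cut; the paper instead suppresses $u$, adds the edge $v_1v_2$, applies minimality of $|G|$, and subdivides the new edge back. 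So the skeleton is right, but both load-bearing steps --- the incomparability of closed neighbourhoods and the connectivity repair --- are missing the ideas (edge-minimality and vertex deletion/block analysis) that actually make them work.
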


\begin{proof}
Suppose $N(u)$ is not a clique. Since $u$ is non-feasible,
$d(u)\leq k-1\leq n-2$. This means $V(G)\backslash (N[u])\neq \emptyset$.
We divide the proof into two cases.

\medskip\noindent\textbf{Case A.} $G-u$ is 2-connected.

Suppose $d(u)=2$ and set $N(u)=\{v_1,v_2\}$. Suppose to the contrary
that $v_1v_2\notin E(G)$. Let $G'$ be from $G-u$
by adding the edge $v_1v_2$. Any vertex in $V(G)\backslash\{u\}$
has the same degree in $G'$ as that in $G$. Note that $x,y\neq u$,
$G'$ is 2-connected and there at least $\frac{|G'|-1}{2}$ vertices
of degree at least $k$. By the choice of $G$, $G'$ has an $(x,y)$-path $P$ of
length at least $k$. If $v_1v_2\notin E(P)$, then $P$ is contained in $G$;
if $v_1v_2\in E(P)$, then using the path $v_1uv_2$ instead
of $v_1v_2$ in $P$, we obtain a feasible $(x,y)$-path
in $G$, a contradiction. Therefore, $v_1v_2\in E(G)$ and
$N(u)$ is a clique.

Now $d(u)\geq 3$. Suppose that $N(u)$ is not a clique. If
there is a vertex $v\in N(u)$ such that it is
non-feasible or $v\in\{x,y\}$, then $G'=G-uv$ is a counterexample
which satisfies (i) but $|E(G')\backslash \{xy\}|<|E(G)\backslash \{xy\}|$,
a contradiction to (ii). Thus, all vertices in $N(u)$ are
feasible. Since $N(u)$ is not a clique, there are two vertices $v,w\in N(u)$
such that $vw\notin E(G)$, and this implies
$N[u]\nsubseteq N[v]$. We have $N[v]\nsubseteq N[u]$ as well,
since $v$ is feasible, which implies $d(v)>d(u)$. Let $G'=G[u\rightarrow v]$.
If $d_{G'}(u)\geq 2$ then let
$G''=G'$; if $d_{G'}(u)=1$ (that is, $u,v$ have no common neighbors
in $G$), then let $G''=G'-u$.
Since $G-u$ is 2-connected and $G'-u$ is a spanning supergraph of $G-u$,
we have $G'-u$ is 2-connected. If $d_{G'}(u)=1$, then $G''=G'-u$ is 2-connected,
if $d_{G'}(u)\geq 2$, then $G''$ is obtained from $G'-u$ by adding the vertex
$u$ and at least two edges incident to $u$, and so $G''$ is 2-connected as well.

Furthermore, we have either $|G''|<|G|$, or $\tau(G'')>\tau(G)$
by Lemma \ref{Lemma:Kelmans}(i). Note that for any vertex
$w\in V(G)\backslash\{u\}$ (including $v$), we have
$d_{G''}(w)\geq d_G(w)$. Since $u$ is non-feasible, there
are also at least $\frac{|G''|-1}{2}$ vertices in
$V(G'')\backslash \{x,y\}$ of degree at least $k$.
By the choice of $G$, $G''$ (and so $G'$) has an $(x,y)$-path of
length at least $k$. By Lemma \ref{Lemma:Kelmans}(ii), $G$
has a feasible $(x,y)$-path, a contradiction.

\medskip\noindent\textbf{Case B.} $G-u$ is separable.

By Lemma \ref{Lem:2connected2}(i), any end-block of $G-u$
has at least one inner-vertex adjacent to $u$ in $G$.

\medskip\noindent\textbf{Subcase B.1.} $G-u$ has an end-block $B$ such that:
$|B|\geq 3$ or $V(B)\nsubseteq N(u)$.

First assume $V(B)\nsubseteq N(u)$. Then there exists an
inner-vertex $v\in V(B)$, such that $uv\in E(G)$ and
$N_B(v)\backslash N(u)\neq \emptyset$, and this implies
$N[v]\nsubseteq N[u]$. Since $u$ has some neighbors outside $B$
(for example, the inner-vertex of another end-block of $G-u$),
$N[u]\nsubseteq N[v]$. Let $G'=G[u\rightarrow v]$.
If $d_{G'}(u)\geq 2$, let $G''=G'$;
if $d_{G'}(u)=1$, let $G''=G'-u$.

Note that $G'-u$ has a spanning subgraph, that is
obtained from $G-u$, by adding an edge between $v$ and
an inner-vertex of each end-block of $G-u$ other than $B$.
By Lemma \ref{Lem:2connected2}(ii), $G'-u$ is 2-connected.
If $d_{G'}(u)=1$, then $G''=G'-u$ is 2-connected; if
$d_{G'}(u)\geq 2$, then $G''=G'$ is obtained from $G'-u$
by adding a vertex $u$ of degree at least 2 in $G'$,
and so $G''$ is 2-connected. In this case, either $|G''|<|G|$,
or $\tau(G'')>\tau(G)$ by Lemma \ref{Lemma:Kelmans}.
Observe that for any vertex $u'\in V(G)\backslash\{u\}$, we have
$d_{G''}(u')\geq d_{G}(u')$. It follows that there are at least
$\frac{|G''|-1}{2}$ vertices of degree at least $k$ in $G''$.
By the choice of $G$, $G''$ has an $(x,y)$-path of length
at least $k$. By Lemma \ref{Lemma:Kelmans}, $G$ has a
feasible $(x,y)$-path, a contradiction.

Now suppose $V(B)\subseteq N(u)$ and $|B|\geq 3$. Let $v$ be an
inner-vertex of $B$. Since $d(v)<d(u)$, $v$ is non-feasible or
$v\in \{x,y\}$. Let $G'=G-uv$. Then $G'$ is obtained
from $G-u$ by adding a new vertex $u$. Note that $u$ is
adjacent to at least one inner-vertex of each end-block
of $G-u$ in $G'$ (here we use the fact that $G$ is 2-connected and $|B|\geq 3$).
By Lemma \ref{Lem:2connected2} (iii), $G'$ is 2-connected.
Now $G'$ is a counterexample which satisfies (i) but
$|E(G')\backslash \{xy\}|<|E(G)\backslash \{xy\}|$, a
contradiction.

\medskip\noindent\textbf{Subcase B.2.} For any end-block $B$ of $G-u$,
$|B|=2$ and $V(B)\subseteq N(u)$.

Let $v$ be a cut-vertex of $G-u$ such that $G-\{u,v\}$ has only one
non-trivial component, i.e., a component with at least two vertices.
(Such a vertex exists since every end-block has two vertices.)
Therefore, $v$ is contained in at least one end-block of $G-u$.
Let $\mathcal{B}=\{B_1,B_2,\ldots,B_t\}$ be the set of the end-blocks
of $G-u$ containing $v$, say, with $V(B_i)=\{v,v_i\}$ for all
$i\in [1,t]$.

Observe that $G-uv$ is 2-connected (recall that
$v$ is a cut-vertex of $G-u$). We shall show
that $v$ is feasible and each $v_i$ is non-feasible.
If $v$ is non-feasible or $v\in \{x,y\}$, then
$G-uv$ is a counterexample with
$|E(G-uv)\backslash \{xy\}|<|E(G)\backslash \{xy\}|$,
a contradiction. Since $u,v\notin \{x,y\}$ and $N_G(v_i)=\{u,v\}$,
by the fact $xy\in E(G)$, we have $v_i\notin \{x,y\}$.
Since $d(v_i)=2$, $v_i$ is non-feasible by Claim \ref{Claim:kgeq5}.

Clearly $G-u$ is not a star. Since $u$ has some neighbor which is
an inner-vertex of an end-block of $G-u$ not in $\mathcal{B}$, we
have $N[u]\nsubseteq N[v]$. Since $v$ is feasible, we infer
$N[v]\nsubseteq N[u]$ as well. Let $G'=G[u\rightarrow v]$.
If $u$ and $v$ have a common neighbor other than $v_i$,
$i\in[1,t]$, then let $G''=G'$; otherwise ($v$ is
a cut-vertex of $G'$), let $G''=G'-\{u,v_1,\ldots,v_t\}$.

Next, we aim to show that $G''$ is 2-connected. We
first show that $G'-\{u,v_1,...,v_t\}$
is 2-connected. For any end-block of $G-u$, if it is not contained
in $\mathcal{B}$, then it has an inner vertex that is adjacent to
$v$ in $G'-\{u,v_1,...,v_t\}$. By Lemma \ref{Lem:2connected2}(ii),
$G'-\{u,v_1,...,v_t\}$ is 2-connected. If $v$
is a cut-vertex of $G'$, then $G''=G'-\{u,v_1,\ldots,v_t\}$ is 2-connected;
otherwise, $G''=G'$ is obtained from the 2-connected graph $G'-\{u,v_1,\ldots,v_t\}$,
by adding vertices $u,v_1,\ldots,v_t$, such that $u$ is adjacent
to at least two vertices in $V(G')\backslash \{u,v_1,...,v_t\}$
and $v_i$ is adjacent to $\{u,v\}$ for $i=1,\ldots,t$. Thus, $G''$ is 2-connected.

By Lemma \ref{Lemma:Kelmans}, either $|G''|<|G|$ or $\tau(G'')>\tau(G)$.
Note that every vertex in
$V(G)\backslash\{u,v,v_1,\ldots,v_t\}$ has the same degree in $G''$
as that in $G$. If $G''=G'$, then $v$ has degree in $G'$ greater
than that in $G$; if $G''\neq G'$, then $|G''|\leq|G|-2$. In both
cases, there are at least $\frac{|G''|-1}{2}$ vertices in
$V(G'')\backslash\{x,y\}$ with degree at least $k$. It follows that
$G''$ (and then $G'$) has an $(x,y)$-path of length at
least $k$. By Lemma \ref{Lemma:Kelmans}, there is a feasible $(x,y)$-path in
$G$, a contradiction.
\end{proof}

\begin{claim}\label{Claim:xyneighbor}
Both $x$ and $y$ have at least two neighbors in $V(G)\backslash\{x,y\}$.
\end{claim}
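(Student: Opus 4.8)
The plan is to argue by contradiction, and by the symmetry of the hypothesis and conclusion in $x$ and $y$ it suffices to treat $x$. So suppose $x$ has at most one neighbour in $V(G)\setminus\{x,y\}$. Since $G$ is $2$-connected we have $d_G(x)\geq 2$, and since $xy\in E(G)$ this forces $d_G(x)=2$ with $N(x)=\{w,y\}$ for a single vertex $w$. The decisive observation is that, because $x$ has only the two neighbours $w$ and $y$, every $(x,y)$-path of length at least $2$ must begin with the edge $xw$; consequently $G$ has a feasible $(x,y)$-path if and only if $G-x$ has a $(w,y)$-path of length at least $k-1$. Thus the whole task reduces to producing such a $(w,y)$-path in $G-x$, which will contradict the choice of $G$.

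First I would dispose of the case that $w$ is non-feasible. Then Claim~\ref{Claim:nonfeaclique} gives that $N(w)$ is a clique; since $x\in N(w)$ and $N(x)=\{w,y\}$, every neighbour of $w$ other than $x$ must be adjacent to $x$ and hence equal $y$, so $N(w)=\{x,y\}$. But then all edges leaving $\{x,w\}$ end at $y$, and as $G$ carries a feasible vertex (necessarily lying in $V(G)\setminus\{x,w,y\}$) there is a vertex outside $\{x,w,y\}$; hence $G-y$ is disconnected and $y$ is a cut-vertex, contradicting $2$-connectivity. Therefore $w$ is feasible, i.e.\ $d_G(w)\geq k$.

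Next I would pass to the suppressed graph. I may assume $wy\in E(G)$: if not, add it; a $(w,y)$-path of length at least $2$ can never use the edge $wy$, so any feasible $(x,y)$-path produced in the enlarged graph in fact lives in $G-x$ and yields one in $G$. With $wy\in E(G)$ the set $N(x)=\{w,y\}$ is a clique, so Lemma~\ref{Lem:2connected2}(v) shows that $G^\ast:=G-x$ is $2$-connected (its order $n-1$ is at least $3$ since $k\geq 5$ by Claim~\ref{Claim:kgeq5} forces $n\geq k+1\geq 6$). Because $G$ was chosen with $k$ minimal, Theorem~\ref{Thm:strengthening-ErdosGallai} is available for the parameter $k-1$; applied to $G^\ast$ with terminal vertices $w,y$ it delivers a $(w,y)$-path of length at least $k-1$ — the contradiction we want — \emph{provided} $G^\ast$ has at least $\tfrac{(n-1)-1}{2}=\tfrac{n-2}{2}$ vertices of $V(G^\ast)\setminus\{w,y\}$ of degree at least $k-1$.

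The main obstacle is exactly this count. For each $z\in V(G)\setminus\{x,w,y\}$ one has $d_{G^\ast}(z)=d_G(z)$, so every feasible vertex of $G$ except possibly $w$ survives, giving at least $\tfrac{n-1}{2}-1=\tfrac{n-3}{2}$ admissible vertices. Since the count is an integer, this clears the threshold $\tfrac{n-2}{2}$ when $n$ is even, but it falls exactly one short in the tight case where $n$ is odd, the feasible count equals $\tfrac{n-1}{2}$, and no vertex outside $\{x,w,y\}$ has degree precisely $k-1$. I expect this parity-tight sub-case to be the delicate part. To close it I would exploit the now completely pinned-down structure: every non-feasible vertex $v$ then has degree at most $k-2$ with $N(v)$ a clique (Claim~\ref{Claim:nonfeaclique}), so $G-v$ is again $2$-connected of smaller order (Lemma~\ref{Lem:2connected2}(v)) and still has no feasible $(x,y)$-path; the minimality of $|G|$ then forces $G-v$ to violate the degree hypothesis, which in turn forces each such $v$ to have a feasible neighbour of degree exactly $k$. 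Combining these local constraints with the exact global degree budget should produce the final contradiction.
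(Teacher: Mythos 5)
Your setup is right and matches the paper's: reduce to $d(x)=2$ with $N(x)=\{w,y\}$, pass to $G-x$ (adding $wy$ if needed so that Lemma~\ref{Lem:2connected2}(v) applies), and look for a $(w,y)$-path of length $k-1$. But the proof is not complete: as you yourself observe, deleting only $x$ shrinks the order by $1$ while the theorem's threshold drops by only $\tfrac12$, so for $n$ odd you are left needing $\tfrac{n-1}{2}$ vertices of degree at least $k-1$ in $G-x$ while you can only guarantee $\tfrac{n-3}{2}$. The paragraph you offer to close this case (``each non-feasible $v$ has a feasible neighbour of degree exactly $k$; combining these local constraints with the exact global degree budget should produce the final contradiction'') is a plan, not an argument — there is no identified contradiction, and nothing in the accumulated structure obviously forbids that configuration. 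So the parity-tight case is a genuine gap.

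The paper closes exactly this gap with one extra move that your approach is missing: it splits on whether $V(G-x)\setminus\{x',y\}$ contains a non-feasible vertex. If it does not, every vertex other than the two ends has degree at least $k$ and the classical Erd\H{o}s--Gallai theorem already gives an $(x',y)$-path of length at least $k$ (no induction, no counting). If it does, the paper deletes that non-feasible vertex $u$ \emph{together with} $x$: since $N(u)$ is a clique by Claim~\ref{Claim:nonfeaclique}, Lemma~\ref{Lem:2connected2}(v) keeps $G''=G-x-u$ $2$-connected, the order drops by $2$ so the threshold drops by a full $1$ to $\tfrac{n-3}{2}$, and each feasible vertex loses at most one unit of degree (it was never adjacent to $x$), so at least $\tfrac{n-1}{2}-1=\tfrac{n-3}{2}$ vertices of degree at least $k-1$ survive — the count now closes with no parity loss. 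If you incorporate this dichotomy (Erd\H{o}s--Gallai when there is no non-feasible vertex to delete; delete one when there is), your argument becomes the paper's proof; your preliminary reduction showing $w$ is feasible is then unnecessary.
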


\begin{proof}
We prove the claim for $x$.
Suppose $N(x)=\{x',y\}$. If $x'y\in E(G)$, let $G'=G-x$; otherwise,
let $G'$ be the graph obtained from $G-x$ by adding a new edge $x'y$.
Thus $G'$ is 2-connected by Lemma \ref{Lem:2connected2} (iv). If every
vertex in $V(G')\backslash\{x',y\}$ is feasible, then it has degree
at least $k$ in $G'$ as well, and by Erd\H{o}s-Gallai Theorem \cite{EG59},
$G'$ has an $(x',y)$-path $P$ of length at least $k$.
Thus, $P'=xx'P$ is a feasible $(x,y)$-path in $G$, a contradiction.
Thus, there is a non-feasible vertex $u$ in $V(G')\backslash\{x',y\}$.
Let $G''=G'-u$. Recall that $N_G(u)$ is a clique, implying that
$G''$ is 2-connected by Lemma \ref{Lem:2connected2} (v).
Note that every feasible vertex in $V(G')\backslash\{x',y\}$
has degree at least $k-1$ in $G''$. There are at least
$\frac{|G|-1}{2}-1=\frac{|G''|-1}{2}$ such vertices.
It follows that $G''$ has an $(x',y)$-path $P$
of length at least $k-1$. Thus, $P'=xx'P$ is a feasible
$(x,y)$-path in $G$, a contradiction. By symmetry, we
can prove the statement for $y$.
\end{proof}

\begin{claim}\label{Claim:G-x2con}
Both $G-x$ and $G-y$ are 2-connected.
\end{claim}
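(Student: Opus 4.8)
Throughout, I assume the setup of the minimal counterexample $G$ and the already-established Claims \ref{Claim:2-cut}--\ref{Claim:xyneighbor} (in particular $xy\in E(G)$, that $\{x,y\}$ is not a cut, that $k\ge 5$, and that the neighbourhood of any non-feasible vertex is a clique), as well as Lemmas \ref{Lemma:Kelmans} and \ref{Lem:2connected2}. By symmetry it suffices to prove that $G-x$ is $2$-connected, and I would argue by contradiction. Since $G$ is $2$-connected, $G-x$ is connected; if it were separable its block--cut tree would have at least two leaves, so $G-x$ has at least two end-blocks, and $y$ is an inner-vertex of at most one of them. First I would dispose of the degenerate configuration in which every end-block of $G-x$ has $y$ as its cut-vertex: then the interior of any end-block reaches the rest of $G$ only through $x$ and $y$, so $\{x,y\}$ would be a cut of $G$, contradicting Claim \ref{Claim:2-cut}. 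Hence I may choose an end-block $B$ of $G-x$ with $y\notin V(B)$; let $c$ be its cut-vertex and $S:=V(B)\setminus\{c\}$ its interior, so $c\neq y$ and $S$ is disjoint from $\{x,y\}$. By Lemma \ref{Lem:2connected2}(i) some vertex of $S$ is adjacent to $x$, and each $v\in S$ satisfies $N_G(v)\subseteq S\cup\{c,x\}$; consequently $\{x,c\}$ is a $2$-cut of $G$ with $S$ the vertex set of one component of $G-\{x,c\}$ and $y$ in another.

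Next I would split $G$ along $\{x,c\}$ into two strictly smaller $2$-connected graphs. Let $A=G[S\cup\{x,c\}]$ and $D=G-S$, in each case adding the edge $xc$ if it is missing; both are $2$-connected by Lemma \ref{Lem:2connected2}(iv). Since $G-S$ (which is $G-x$ with the interior of an end-block deleted) is still connected and contains $c$ and $y$, there is a $(c,y)$-path internally disjoint from $S\cup\{x\}$. Thus a long $(x,c)$-path in $A$ (which cannot use the edge $xc$, since that edge joins the two endpoints and would terminate the path at once) concatenates with this $(c,y)$-path to give a long $(x,y)$-path of $G$; and any long $(x,y)$-path of $D$ lifts to one of $G$ after replacing a possible use of $xc$ by an $(x,c)$-path through $B$. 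Because $G$ is a smallest counterexample for this $k$ while $A$ and $D$ are smaller, neither can meet the hypothesis for the relevant endpoints: applying Theorem \ref{Thm:strengthening-ErdosGallai} (valid for all graphs smaller than $G$, and even at threshold $k-1$ by the minimality of $k$) to $A$ with endpoints $(x,c)$ and to $D$ with endpoints $(x,y)$ shows that $S$ contains fewer than $\frac{|S|+1}{2}$ feasible vertices, and that $V(D)\setminus\{x,y\}$ contains fewer than $\frac{|D|-1}{2}$ vertices of degree at least $k$ in $D$. Crucially, degrees are preserved under the split: every $v\in S$ keeps its $G$-degree in $A$, and every vertex of $V(D)\setminus\{x,y,c\}$ keeps its $G$-degree in $D$, because $S$ sends no edge outside $S\cup\{c,x\}$.

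Summing the two bounds and comparing with the global hypothesis that $G$ has at least $\frac{n-1}{2}$ feasible vertices, the feasible vertices of $G$ inside $S$ number at most $\lfloor|S|/2\rfloor$ and those outside $S\cup\{c\}$ number at most roughly $|V(D)|/2$; a direct integrality check shows this already yields a contradiction \emph{except} through the single straddling vertex $c$. The one surviving case is that $c$ is feasible in $G$ ($d_G(c)\ge k$) yet falls below the threshold in $D$ ($d_D(c)<k$), which occurs precisely when $c$ has enough neighbours inside $S$; this produces an off-by-one/parity gap that the plain count cannot close. I expect this straddling-cut-vertex case to be the main obstacle. To settle it I would analyse the forced near-tight configuration (where $d_G(c)=k$, $B$ is $2$-connected, and $c$ has at least two neighbours in $S$) and eliminate it by a supplementary reduction: either perform a Kelmans operation from a suitable vertex of $S$ toward $c$, as in the proof of Claim \ref{Claim:nonfeaclique}, invoking the $2$-connectivity of the block together with Lemma \ref{Lemma:Kelmans} to contradict the degree-sequence choice~(iii); or re-split at a $2$-cut chosen so that $c$ is non-feasible, recovering the missing unit in the count. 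The same argument applied with the roles of $x$ and $y$ exchanged shows that $G-y$ is $2$-connected as well.
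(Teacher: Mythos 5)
Your reduction to a single $2$-cut $\{x,c\}$ around one end-block of $G-x$ is set up correctly (the existence of an end-block avoiding $y$, the containment $N_G(v)\subseteq S\cup\{c,x\}$ for $v\in S$, and the two splits $A$ and $D$ are all fine), but the proof is not complete: as you yourself concede, the plain count of feasible vertices in $S$ and in $V(D)\setminus\{x,y\}$ falls short by exactly the contribution of the straddling cut-vertex $c$, and the two ways you suggest to close this gap (a Kelmans operation from a vertex of $S$ toward $c$, or re-splitting at a cut with a non-feasible vertex) are left entirely undeveloped. Note that $c$, being a cut-vertex of $G-x$, has a non-clique neighbourhood and hence is forced to be feasible by Claim \ref{Claim:nonfeaclique}, so the ``re-split so that $c$ is non-feasible'' option is not available, and the Kelmans reduction of Claim \ref{Claim:nonfeaclique} is tailored to non-feasible vertices; neither escape route is routine. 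The deeper reason the naive count cannot work is that applying Theorem \ref{Thm:strengthening-ErdosGallai} to $A$ at the full threshold $k$ only yields that $S$ has at most $\lfloor|S|/2\rfloor$ feasible vertices, which is not a genuine deficit below one half.

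The paper closes exactly this gap by two refinements you are missing. First, it anchors the analysis on the (unique) block $B$ of $G-x$ containing $y$ and treats \emph{all} branches $S_i$ hanging off the cut-vertices $y_i\in V(B)$ simultaneously; for each branch with $|S_i|\ge 2$ it proves the strictly stronger bound of at most $\frac{|S_i|-2}{2}$ feasible vertices, by deleting up to two non-feasible vertices (whose neighbourhoods are cliques, so $2$-connectivity survives via Lemma \ref{Lem:2connected2}(v)), applying the theorem at the reduced threshold $k-2$, and then recovering the two lost edges from a $(y_i,y)$-path of length at least $2$ inside $B$. Second, it absorbs the block $B$ itself and the feasible cut-vertices $y_i$ into an auxiliary $2$-connected graph $G'$ obtained by attaching a new vertex $x'$ to $y$ and to all $y_i$, and invokes the theorem at threshold $k-1$ (legitimate by the minimality of $k$) to extract an $(x',y)$-path that re-expands to a long $(x,y)$-path of $G$. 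It is precisely the $k-2$/$k-1$ slack, traded against short connecting segments, that supplies the missing half-unit in your count; without it the argument does not go through.
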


\begin{proof}
We prove the claim for $x$. Suppose that $G-x$ is separable.
By Claim \ref{Claim:2-cut}, $\{x,y\}$ is not a cut of $G$.
This implies $y$ is not a cut-vertex of $G-x$.
Let $B$ be the unique block of $G-x$ which contains
$y$. Let $Y=\{y_1,y_2,\ldots,y_t\}$ be the set of
cut-vertices of $G-x$ contained in $B$. We remark
that possibly $|Y|=1$. Since the neighborhood of $y_i$ is
not a clique, by Claim \ref{Claim:nonfeaclique}, $y_i$ is feasible.
And $y_i$ is possibly contained in more than two
blocks of $G-x$. Let $S_i$ be the set of vertices of the
components of $G-\{x,y_i\}$ not containing $y$, and
$G_i=G[S_i\cup\{x,y_i\}]$. If $xy_i\notin E(G)$,
then we add the edge $xy_i$ to $G_i$. By Lemma \ref{Lem:2connected2} (iv),
$G_i$ is 2-connected. Recall $y$ is not a cut-vertex 
of $G-x$ and $d_{G-x}(y)\geq 2$,
and so $|B|\geq 3$. This implies that $B$ has a
$(y_i,y)$-path of length at least 2.

\begin{fact}
If $|S_i|\geq 2$, then $S_i$ contains at most $\frac{|S_i|-2}{2}$
feasible vertices.
\end{fact}

\begin{proof}
Suppose that $S_i$ contains at least $\frac{|S_i|-1}{2}$ feasible
vertices. If $S_i$ contains 0, 1, or at least 2 non-feasible
vertices, then let $T_i\subseteq S_i$ be a set of 0, 1, or exactly
2 non-feasible vertices, respectively. Let $G'_i=G_i-T_i$.
Since the neighborhood of every non-feasible vertex is a clique,
$G'_i$ is 2-connected by Lemma \ref{Lem:2connected2} (v). Note
that every feasible vertex in $S_i$ has degree at least $k-2$ in
$G'_i$. If $|T_i|\leq 1$, then all vertices in $S_i\backslash T_i$
have degree at least $k-2$; if $|T_i|=2$, then there are at least
$\frac{|S_i|-1}{2}=\frac{|G'_i|-1}{2}$ vertices in $S_i\backslash
T_i$ of degree at least $k-2$. For each case, $G'_i$ has an
$(x,y_i)$-path $P$ of length at least $k-2$. Let $Q$ be a
$(y_i,y)$-path in $B$ of length at least 2. Then $P'=Py_iQ$
is a feasible $(x,y)$-path in $G$, a contradiction.
\end{proof}

Let $G'$ be the graph obtained from $B$ by adding a new vertex
$x'$, an edge $x'y$, and all $t$ edges $x'y_i$, $i\in[1,t]$.
Since $B$ is 2-connected and $d_{G'}(x')\geq 2$, $G'$ is 2-connected.
A feasible vertex in $V(G)\backslash(\{x,y\}\cup
\{y_i\in Y: |S_i|\geq 2\}\cup\bigcup_{i=1}^tS_i)$ has degree at
least $k-1$ in $G'$. If $|S_i|=1$, then clearly the
vertex in $S_i$ is non-feasible.
Therefore, in $V(G')\backslash\{x',y\}$, there are at
least
$$\frac{|G|-1}{2}-\sum_{|S_i|\geq 2}\left(\frac{|S_i|-2}{2}+1\right)\geq\frac{|G'|-1}{2}$$
vertices of degree at least $k-1$ in $G'$. Notice that
$|G'|<|G|$. Hence $G'$ has an $(x',y)$-path $P$ of length at least
$k-1$. Let $x'y_i$ be the first edge on $P$,
and $Q$ be an $(x,y_i)$-path of $G_i$ of length at least 2.
Then $P'=Qy_iP[y_i,y]$ is a feasible $(x,y)$-path in
$G$, a contradiction. The other
assertion can be proved by symmetry.
This proves Claim \ref{Claim:G-x2con}.
\end{proof}

\begin{claim}\label{Claim:feasiblenumber}
There are exactly $\frac{|G|-1}{2}$ feasible vertices in $V(G)\backslash\{x,y\}$,
and every vertex in $N(x)\backslash\{y\}$ ($N(y)\backslash\{x\}$) is feasible.
\end{claim}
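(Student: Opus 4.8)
The statement has two parts: that $N(x)\setminus\{y\}$ and $N(y)\setminus\{x\}$ consist only of feasible vertices, and that the number of feasible vertices equals $\frac{|G|-1}{2}$ exactly (in particular $|G|$ is odd). The plan is to prove the neighbourhood assertion first, since it is both the cleaner half and a convenient tool for the counting.

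For the neighbourhood part I would argue by contradiction: suppose some $u\in N(x)\setminus\{y\}$ is non-feasible. The idea is to build the desired $(x,y)$-path as the edge $xu$ followed by a long $(u,y)$-path that avoids $x$. Work in $G-x$, which is $2$-connected by Claim \ref{Claim:G-x2con}, and apply the theorem with the smaller parameter $k-1$; this is legitimate because $k$ was chosen minimal, so no counterexample exists for $k-1$. Every feasible vertex of $G$ has degree at least $k-1$ in $G-x$ (its degree drops by at most one when $x$ is deleted), and each such vertex is distinct from $x$, from $y$, and from the non-feasible $u$. Hence $V(G-x)\setminus\{u,y\}$ contains at least $\frac{|G|-1}{2}>\frac{|G-x|-1}{2}$ vertices of degree at least $k-1$, and the $(k-1)$-version yields a $(u,y)$-path $Q$ of length at least $k-1$ in $G-x$. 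Then $xuQ$ is an $(x,y)$-path of length at least $k$ in $G$, a contradiction. Exchanging the roles of $x$ and $y$ settles $N(y)\setminus\{x\}$. Note that $u$ being non-feasible is used only to keep $u$ out of the feasible count; the extra unit of length comes for free from prepending the edge $xu$.

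For the counting part I would again argue by contradiction, assuming there are \emph{more} than $\frac{|G|-1}{2}$ feasible vertices. If every vertex of $V(G)\setminus\{x,y\}$ were feasible, the ordinary Erd\H{o}s--Gallai Theorem would already furnish a feasible $(x,y)$-path, so some non-feasible vertex $u$ exists; by the neighbourhood part $u\notin N(x)\cup N(y)$, and by Claim \ref{Claim:nonfeaclique} its neighbourhood $N(u)$ is a clique, so $G-u$ is $2$-connected by Lemma \ref{Lem:2connected2}(v) and deleting $u$ leaves $d(x),d(y)$ untouched. Since $|G-u|<|G|$, minimality lets me search for the path inside $G-u$. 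When at most one feasible neighbour of $u$ has degree exactly $k$, deleting $u$ preserves at least $\frac{|G-u|-1}{2}$ feasible vertices, and the same-$k$ minimality of $G$ produces a feasible $(x,y)$-path directly. Otherwise I would fall back on the $k-1$ version to obtain an $(x,y)$-path $P$ of length at least $k-1$ in $G-u$ and then reinsert $u$ to gain the last edge, replacing a suitable edge of $P$ that lies inside the clique $N(u)$ by a detour through $u$.

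The main obstacle is exactly this reinsertion: a path of length only $k-1$ in $G-u$ need not traverse two neighbours of $u$ consecutively, so an edge of $P$ lying inside $N(u)$ may fail to exist, and rerouting through a chord of the clique $N(u)$ generically costs precisely the length it would gain. Overcoming this is where the clique structure of $N(u)$, the $2$-connectivity of $G-u$, and the freedom to choose $P$ (or to choose which non-feasible vertex to delete, for instance one of minimum degree) must be combined; I expect the honest argument to split according to how many vertices of $N(u)$ an optimal path can be forced to meet, mirroring the vertex-reinsertion bookkeeping already carried out in Claims \ref{Claim:nonfeaclique} and \ref{Claim:xyneighbor}.
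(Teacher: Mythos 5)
Your first half (every vertex of $N(x)\setminus\{y\}$, resp.\ $N(y)\setminus\{x\}$, is feasible) is correct and is exactly the paper's argument: delete $x$, use Claim \ref{Claim:G-x2con} for $2$-connectivity of $G-x$, observe that the $\frac{|G|-1}{2}\geq\frac{|G-x|-1}{2}$ feasible vertices all lie in $V(G-x)\setminus\{u,y\}$ and keep degree at least $k-1$, invoke the minimality of $k$ to get a $(u,y)$-path of length at least $k-1$, and prepend the edge $xu$.

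The counting half, however, has a genuine gap, and you have named it yourself. After deleting a non-feasible vertex $u$ at distance two from $x$ and $y$, every feasible neighbour of $u$ of degree exactly $k$ falls below the threshold, so the same-$k$ minimality is not available in general; and the fallback of finding an $(x,y)$-path of length $k-1$ in $G-u$ and ``reinserting'' $u$ fails because such a path need not traverse two vertices of the clique $N(u)$ consecutively, and there is no mechanism to force it to. This is not a bookkeeping issue that mirrors Claims \ref{Claim:nonfeaclique} and \ref{Claim:xyneighbor}: in those claims the deleted vertex is \emph{not} reinserted into the path (the extra unit of length comes from elsewhere), whereas here you would need it to be. The paper avoids the whole difficulty: the counting statement is proved by the \emph{same} delete-$x$ argument as the neighbourhood statement. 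If there are at least $\frac{|G|}{2}$ feasible vertices (the integer negation of ``exactly $\frac{|G|-1}{2}$'' given the hypothesis, which also forces $|G|$ odd in the surviving case), pick $x'\in N(x)\setminus\{y\}$ arbitrarily; it is feasible, but the surplus lets you discard it from the count, since $\frac{|G|}{2}-1=\frac{|G-x|-1}{2}$ feasible vertices remain in $V(G-x)\setminus\{x',y\}$, each of degree at least $k-1$ in the $2$-connected graph $G-x$. The $(k-1)$-version yields an $(x',y)$-path of length at least $k-1$, and prepending $xx'$ gives the contradiction. In short, the one extra feasible vertex pays exactly for letting the endpoint $x'$ be feasible; no vertex deletion from $V(G)\setminus\{x,y\}$ or reinsertion is needed.
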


\begin{proof}
Suppose to the contrary. If there is a non-feasible vertex in
$N(x)\backslash\{y\}$, then let $x'$ be such a
vertex; if all vertices in $N(x)\backslash\{y\}$ are feasible and
there are at least $\frac{|G|}{2}$ feasible vertices in
$V(G)\backslash\{x,y\}$, then choose $x'\in N(x)\backslash\{y\}$
arbitrarily. Let $G'=G-x$. By Claim \ref{Claim:G-x2con},
$G'$ is 2-connected. Note that every feasible vertex in
$V(G')\backslash\{x',y\}$ has degree at least $k-1$ in $G'$. There
are at least
$$\min\left\{\frac{|G|-1}{2},\frac{|G|}{2}-1\right\}=\frac{|G'|-1}{2}$$
such vertices. It follows $G'$ has an $(x',y)$-path $P$ of length
at least $k-1$. Then $P'=xx'P$ is a feasible $(x,y)$-path
in $G$, a contradiction.
If there is a vertex in $N(y)\backslash\{x\}$ that is non-feasible,
then we can prove it similarly.
\end{proof}

\begin{claim}\label{Claim:uxcommon}
Let $u$ be a non-feasible vertex. Then $u$ and $x$ have at least two common
neighbors; so do $u$ and $y$.
\end{claim}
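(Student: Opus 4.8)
The plan is to argue by contradiction, assuming that the non-feasible vertex $u$ has at most one common neighbor with $x$ (the case of $y$ follows by symmetry). Recall from Claim \ref{Claim:nonfeaclique} that $N(u)$ is a clique, and from Claim \ref{Claim:G-x2con} that $G-x$ is $2$-connected. The strategy is to delete $x$ and exploit the clique structure of $N(u)$: since $N(u)$ is a clique, deleting $u$ itself (or a few vertices near $u$) from a suitable subgraph should preserve $2$-connectivity via Lemma \ref{Lem:2connected2}(v), and then the minimality of $G$ applied to a smaller graph should yield a long $(x',y)$-path that we can prepend with the edge $xx'$ to recover a feasible $(x,y)$-path in $G$ — the contradiction we seek.

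First I would set up the deletion. Let $x'\in N(x)\setminus\{y\}$; by Claim \ref{Claim:feasiblenumber} every such $x'$ is feasible, and I want to choose $x'$ so that after removing $x$ we still control degrees near $u$. The key tension is that $u$ is non-feasible, so $d_G(u)\leq k-1$, and the assumption says $u$ and $x$ share at most one neighbor, i.e. $|N(u)\cap N(x)|\leq 1$. Because $N(u)$ is a clique and $u\notin\{x,y\}$, the vertices of $N(u)$ together with $u$ form a dense local structure; the point is that removing $u$ (and possibly $x$) from $G$ and counting feasible vertices carefully will leave at least $\frac{|G''|-1}{2}$ feasible vertices of degree at least $k-1$ in the resulting $2$-connected graph $G''$. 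Here I expect to use the exact count from Claim \ref{Claim:feasiblenumber}, namely that there are \emph{exactly} $\frac{|G|-1}{2}$ feasible vertices, so there is no slack to waste: I must show that deleting $x$ costs the feasibility count of at most the removed vertices, and that the long path recovered has length at least $k-1$, enough to give length $k$ after adding $xx'$.

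The main obstacle will be the bookkeeping of degrees and the feasibility count after deletion, precisely because Claim \ref{Claim:feasiblenumber} leaves the number of feasible vertices exactly at the threshold. Concretely, deleting $x$ drops the degree of each neighbor of $x$ by one, so a feasible vertex $w\in N(x)$ with $d_G(w)=k$ becomes non-feasible for the shifted threshold unless I re-index $k\mapsto k-1$; I would track this by seeking an $(x',y)$-path of length $k-1$ in $G-x$ rather than $k$, as was done in Claims \ref{Claim:xyneighbor} and \ref{Claim:feasiblenumber}. The delicate part is ensuring that after also deleting $u$ (to use the clique property and drop its non-feasibility from the count), the number of vertices of degree at least $k-1$ in the smaller graph is still at least half of its order minus a half; the assumption $|N(u)\cap N(x)|\leq 1$ is exactly what should make this inequality go through, since a vertex sharing two neighbors with $x$ would double-count degree losses. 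Once the count is verified and $2$-connectivity is guaranteed by Lemma \ref{Lem:2connected2}(v), the minimality of $G$ (choice conditions (i)–(iii)) delivers the path, and prepending $xx'$ completes the contradiction.
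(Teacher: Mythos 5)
Your plan matches the paper's proof essentially step for step: assume $u$ and $x$ have at most one common neighbor, set $G':=G-\{x,u\}$ (2-connected by Claim \ref{Claim:nonfeaclique} together with Lemma \ref{Lem:2connected2}(v) applied to $G-x$), note that every feasible vertex other than the designated $x'$ is adjacent to at most one of $\{x,u\}$ and hence keeps degree at least $k-1$ in $G'$, so that at least $\frac{|G|-1}{2}-1=\frac{|G'|-1}{2}$ such vertices lie in $V(G')\setminus\{x',y\}$, and the minimality of $k$ yields an $(x',y)$-path of length $k-1$ which, prepended with $xx'$, gives the contradiction. The one detail you leave implicit but must pin down is that $x'$ is chosen to be the unique common neighbor of $u$ and $x$ when it exists (and arbitrarily in $N(x)\setminus\{y\}$ otherwise, using $uy\notin E(G)$ from Claim \ref{Claim:feasiblenumber} to see $x'\neq y$), so that $x'$ is exactly the one vertex allowed to suffer the double degree loss.
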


\begin{proof}
Suppose not. If $u$ and $x$ have exactly one common neighbor, then
let $x'\in N(x)\cap N(u)$; if $u$ and $x$ have no common
neighbor, then choose $x'\in N(x)\backslash\{y\}$ arbitrarily.
By Claim \ref{Claim:feasiblenumber}, $uy\notin E(G)$, and so $x'\neq y$.
Again, by Claim \ref{Claim:feasiblenumber}, $x'$ is feasible.

By Claim \ref{Claim:G-x2con}, $G-x$ is 2-connected. By
Claim \ref{Claim:nonfeaclique}, the neighborhood of $u$
is clique in $G$. Recall that $u\notin N(x)$. Let
$G':=G-\{x,u\}$. We infer $G'$ is 2-connected by
Lemma \ref{Lem:2connected2}(v). Since $u$ and $x$
have at most one common neighbor, every feasible
vertex other than $x'$ is adjacent to at most one
vertex of $\{u,x\}$. It follows that every feasible
vertex other than $x'$ has degree at least $k-1$ in
$G'$, and hence there are at least
$\frac{|G|-1}{2}-1=\frac{|G'|-1}{2}$ such vertices
in $V(G')\backslash \{x',y\}$. Therefore, $G'$ has an
$(x',y)$-path $P$ of length at least $k-1$. Then
$P'=xx'P$ is a feasible $(x,y)$-path in $G$,
a contradiction. The other assertion can be proved
by symmetry.
\end{proof}

By Claim \ref{Claim:feasiblenumber}, there exist non-feasible
vertices. Moreover, Claims \ref{Claim:feasiblenumber} and \ref{Claim:uxcommon} tell
us all non-feasible vertices in $G$ are at distance 2 from $x$ and $y$.

In the following, let $x'\in N(x)$ such that it has a non-feasible neighbor.

\begin{claim}\label{Claim:G-xx'separable}
$G-\{x,x'\}$ is separable, and $y$ is not a cut-vertex of
$G-\{x,x'\}$.
\end{claim}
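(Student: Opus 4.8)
The plan is to argue by contradiction, assuming the claim fails, and in every case to produce a feasible $(x,y)$-path, contradicting the choice of $G$. The uniform mechanism I would use is this: since $G-x$ is $2$-connected by Claim~\ref{Claim:G-x2con}, it suffices to find an $(x',y)$-path of length at least $k-1$ in $G-x$, because prepending the edge $xx'$ then yields a feasible $(x,y)$-path in $G$ (such a path, having length at least $k-1\geq 4$ by Claim~\ref{Claim:kgeq5}, cannot be the single edge $x'y$, so it survives in $G$ even if we must discard an auxiliary edge $x'y$ introduced to restore $2$-connectivity). As the theorem holds for the parameter $k-1<k$ by the minimality of $k$, it is enough to exhibit the required number of vertices of degree at least $k-1$ in the relevant $2$-connected graph.

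Before splitting into cases I would record the structural fact that drives everything. Let $u$ be the non-feasible neighbour of $x'$ guaranteed by the choice of $x'$. By Claim~\ref{Claim:nonfeaclique} the set $N(u)$ is a clique, and $x'\in N(u)$; hence every $w\in N(u)\setminus\{x'\}$ is adjacent to $x'$, so $N(u)\subseteq N[x']$ and therefore $N[u]\subseteq N[x']$. Thus the non-feasible vertex $u$ is dominated by the endpoint $x'$, so it should never be needed on a longest $(x',y)$-path of $G-x$; moreover, as $u\notin N(x)$ by Claim~\ref{Claim:feasiblenumber} we have $N_{G-x}(u)=N(u)$, a clique, so deleting $u$ preserves $2$-connectivity by Lemma~\ref{Lem:2connected2}(v). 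Recall also that $|G|=n$ is odd, that $G$ has exactly $\frac{n-1}{2}$ feasible and $\frac{n-3}{2}$ non-feasible vertices by Claim~\ref{Claim:feasiblenumber}, and that $u$ and $x$ have at least two common neighbours by Claim~\ref{Claim:uxcommon}.

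For the separability assertion, suppose $G-\{x,x'\}$ is $2$-connected. Running the mechanism on $G-x$ with endpoints $x',y$, the feasible vertices of $G$ other than $x'$---there are $\frac{n-1}{2}-1=\frac{n-3}{2}$ of them---each keep degree at least $k-1$ in $G-x$ and hence qualify; but the threshold demanded by the theorem for $G-x$ (order $n-1$) is $\bigl\lceil\frac{n-2}{2}\bigr\rceil=\frac{n-1}{2}$, so we fall short by \emph{exactly one} vertex. This unit deficit, forced by the parity of $n-1$ together with the loss of one feasible vertex as the endpoint $x'$, is the crux of the whole claim. I would close it by exploiting the dominated vertex $u$: deleting a suitably chosen non-feasible vertex turns $G-x$ into an odd-order $2$-connected graph whose threshold no longer rounds up, or else one applies a Kelmans operation (Lemma~\ref{Lemma:Kelmans}) to manufacture the missing vertex, throughout using $N[u]\subseteq N[x']$ and Claim~\ref{Claim:uxcommon} to control the degree bookkeeping around the common neighbours of $x$ and $u$.

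For the second assertion, suppose $G-\{x,x'\}$ is separable but $y$ is a cut-vertex of it; then $\{x',y\}$ is a $2$-cut of the $2$-connected graph $G-x$. Let $D_1,\dots,D_m$ with $m\geq 2$ be the components of $G-\{x,x',y\}$ and set $G_i=(G-x)[V(D_i)\cup\{x',y\}]$, adding the edge $x'y$ if necessary; each $G_i$ is $2$-connected by Lemma~\ref{Lem:2connected2}(iv). Since every $G-x$-neighbour of a vertex of $D_i$ lies in $V(D_i)\cup\{x',y\}$, a feasible vertex of $G$ inside $D_i$ has degree at least $k-1$ in $G_i$; so if some $D_i$ carried at least $\frac{|D_i|+1}{2}=\frac{|G_i|-1}{2}$ feasible vertices we would obtain an $(x',y)$-path of length at least $k-1$ in $G_i$, and hence a feasible $(x,y)$-path, a contradiction. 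Thus each $D_i$ has at most $\frac{|D_i|}{2}$ feasible vertices, which together with $x'$ accounts for at most $\frac{n-3}{2}+1=\frac{n-1}{2}$ feasible vertices in total---exactly the count of Claim~\ref{Claim:feasiblenumber}. The counting is therefore tight rather than contradictory, and the argument must again be completed by exploiting the dominated neighbour $u$, which lies in one particular $D_i$ and supplies the slack needed to beat the threshold in that piece. I expect this tight/unit gap---present in both assertions and resolvable only through the clique, domination, and common-neighbour structure of non-feasible vertices (Claims~\ref{Claim:nonfeaclique} and~\ref{Claim:uxcommon})---to be the main obstacle; the connectivity reductions through Lemma~\ref{Lem:2connected2} are routine by comparison.
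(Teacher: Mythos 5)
There is a genuine gap in both halves of your argument: in each case you correctly locate the difficulty (the counting is short by exactly one vertex, resp.\ exactly tight), but you do not close it, and the devices you gesture at would not close it. For the first assertion, your mechanism is to find an $(x',y)$-path of length $k-1$ in $G-x$, but $G-x$ has even order $n-1$, the threshold is $\frac{n-1}{2}$, and only $\frac{n-3}{2}$ feasible vertices are available. Deleting the dominated vertex $u$ does not repair this: by Claim~\ref{Claim:uxcommon} $u$ and $x$ have at least two common neighbours, and those vertices lose two from their degree in $G-\{x,u\}$, so they only certify threshold $k-2$ while you would still need a $(k-1)$-path; and ``apply a Kelmans operation to manufacture the missing vertex'' is not an argument. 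The paper's actual move is different and is the key idea you are missing: take a \emph{non-feasible} neighbour $x''$ of $x'$ and look for an $(x'',y)$-path of length only $k-2$ in $G-\{x,x'\}$, prepending the two edges $xx'$ and $x'x''$. Passing to $G-\{x,x'\}$ restores odd order (threshold $\frac{n-3}{2}$, exactly matched by the $\frac{n-3}{2}$ feasible vertices other than $x'$), and lowering the target to $k-2$ absorbs the degree loss of $2$; both problems vanish simultaneously.

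For the second assertion your per-component bound of $\frac{|D_i|}{2}$ feasible vertices is, as you note, only tight, so no contradiction follows. The paper proves the strictly stronger bound $\frac{|S_i|-1}{2}$ per component, which with $t\geq 2$ components plus the single feasible vertex $x'$ gives at most $\frac{|G|-t-1}{2}\leq\frac{|G|-3}{2}<\frac{|G|-1}{2}$ feasible vertices, a genuine contradiction. Getting that extra $\frac{1}{2}$ per component again requires the two-edge trick: each $S_i$ must contain a non-feasible vertex $v_i$ (else all of $S_i$ is feasible with degree $\geq k-1$ in $G_i$ and a $(k-1)$-path exists); Claim~\ref{Claim:uxcommon} together with $v_i\notin N(y)$ forces $x$ to have a neighbour in $S_i$, hence an $(x,x')$-detour of length at least $2$ through each component; and if $S_i$ had $\frac{|S_i|}{2}$ feasible vertices one deletes $v_i$, finds an $(x',y)$-path of length $k-2$ in $G_i-v_i$, and enters via a length-$\geq 2$ detour through a \emph{different} component $S_j$. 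One also needs $|S_i|\geq 2$, which follows from Claim~\ref{Claim:feasiblenumber} since a singleton component would be a non-feasible neighbour of $y$. None of this machinery appears in your proposal, so the proof is not complete as written.
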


\begin{proof}
Let $G':=G-\{x,x'\}$ and $x''\in N(x')$ be non-feasible. Suppose
$G'$ is 2-connected. Note that every feasible vertex
has degree at least $k-2$ in $G'$. There are at least
$\frac{|G|-1}{2}-1=\frac{|G'|-1}{2}$ such vertices
in $G'\backslash \{x'',y\}$. Thus, $G'$ has an $(x'',y)$-path
$P$ of length at least $k-2$. Then $P'=xx'x''P$
is a feasible $(x,y)$-path in $G$, a contradiction.
Thus, $G'$ is separable.

Suppose $y$ is a cut-vertex of $G-\{x,x'\}$. Let
$\mathcal{H}=\{H_1,H_2,\ldots,H_t\}$ be the set of components of
$G-\{x,x',y\}$, $S_i=V(H_i)$, and $G_i=G[S_i\cup\{x',y\}]$, $1\leq i\leq t$. If
$x'y\notin E(G)$, then we add an edge $x'y$ to each $G_i$.
Note that $G-x$ is 2-connected, and so $G_i$
is 2-connected by Lemma \ref{Lem:2connected2} (iv).

If $|S_i|=1$ then the vertex in $S_i$ is non-feasible (it has degree
at most 3) and adjacent to $y$, contradicting Claim
\ref{Claim:feasiblenumber}. Hence $|S_i|\geq 2$ for
every $i\in[1,t]$.

\begin{fact}\label{Fact:2}
$S_i$ contains at most $\frac{|S_i|-1}{2}$ feasible vertices.
\end{fact}

\begin{proof}
Note that every feasible vertex in $S_i$ has degree at least $k-1$
in $G_i$. If every vertex in $S_i$ is feasible,
then $G_i$ has an $(x',y)$-path $P$ of length
at least $k-1$. Therefore, $P'=xx'P$ will be a feasible
$(x,y)$-path in $G$, a contradiction. Thus, $S_i$ contains
a non-feasible vertex, say $v_i$, for every $i\in [1,t]$.
By Claim \ref{Claim:uxcommon}, $x$ and $v_i$ have at least
two common neighbors. Since $v_i$ is non-feasible and
$N(y)\backslash \{x\}$ consists of feasible vertices,
$v_i\notin N(y)$, and it follows $x$ has a neighbor in $S_i$ for every
$i\in[1,t]$. Notice that $x'$ also has a neighbor in $S_i$.
Thus, there is an $(x,x')$-path of length at least 2 with all
internal vertices in $S_i$ for any $i\in [1,t]$.

Suppose $S_i$ contains at least $\frac{|S_i|}{2}$ feasible
vertices. Recall that $v_i\in S_i$ is a non-feasible vertex. Let
$G'_i=G_i-v_i$. Then $G'_i$ is 2-connected by Lemma \ref{Lem:2connected2}
(v) (note that $G_i$ is 2-connected and $N_{G_i}(v_i)$ is a
clique). A feasible vertex in $S_i$ has degree at least $k-2$
in $G'_i$. There are at least $\frac{|S_i|}{2}=\frac{|G'_i|-1}{2}$
vertices in $V(G'_i)\backslash\{x',y\}$ of degree at least $k-2$ in
$G'_i$. Thus, $G'_i$ has an $(x',y)$-path $P$ of length at least $k-2$.
Let $Q$ be an $(x,x')$-path of length at least 2 with all internal vertices in
$S_j$ with $j\neq i$ (by the analysis above, such a path exists).
Then $P'=Qx'P$ is a feasible $(x,y)$-path, a contradiction.
\end{proof}

By Fact \ref{Fact:2}, there are at most
$|\{x'\}|+\sum_{i=1}^t\frac{|S_i|-1}{2}=\frac{|G|-t-1}{2}\leq\frac{|G|-2}{2}$
feasible vertices, a contradiction. This proves Claim
\ref{Claim:G-xx'separable}.
\end{proof}

Recall $x'\in N(x)$ such that it has a non-feasible neighbor.
By Claim \ref{Claim:G-xx'separable}, $y$ is contained
in a unique block of $G-\{x,x'\}$, say
$B$.

To finish the proof, we only need to consider
the cases according to whether $B$ is an end-block of $G-\{x,x'\}$
or not. The coming claim solves the first case.
\begin{claim}\label{Claim:BnotEnd}
$B$ is not an end-block of $G-\{x,x'\}$.
\end{claim}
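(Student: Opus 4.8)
The plan is to argue by contradiction: suppose $B$ is an end-block of $G-\{x,x'\}$, and let $c$ be its unique cut-vertex. By Claim~\ref{Claim:G-xx'separable} the vertex $y$ is not a cut-vertex of $G-\{x,x'\}$, so $y$ is an inner-vertex of $B$ and $y\neq c$. First I would record two structural facts. On one hand $c$ is \emph{feasible}: as a cut-vertex of $G-\{x,x'\}$ its neighbourhood contains two vertices in distinct blocks, hence non-adjacent, so $N_G(c)$ is not a clique and Claim~\ref{Claim:nonfeaclique} applies. On the other hand, writing $G^\ast:=G-\big(V(B)\setminus\{c\}\big)$ for the graph obtained by deleting the inner-vertices of $B$, every vertex of $V(G^\ast)\setminus\{x,x',c\}$ keeps \emph{all} its neighbours inside $V(G^\ast)$ (in $G-\{x,x'\}$ such a vertex is separated from the inner-vertices of $B$ by $c$), and dually every inner-vertex of $B$ other than $y$ keeps all its neighbours inside $V(B)\cup\{x,x'\}$. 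Hence feasibility of these vertices is unaffected when we pass to either of the two natural sub-structures.

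The heart of the argument is a counting dichotomy. Let $f_R$ count the feasible vertices in $V(G^\ast)\setminus\{x,x',c\}$ and $f_B$ those in $V(B)\setminus\{c,y\}$, and set $r=|V(G^\ast)|-3$ and $b=|V(B)|-2$, so that $|G|=r+b+4$. By Claim~\ref{Claim:feasiblenumber} the number of feasible vertices is $\frac{|G|-1}{2}$, and since $c$ is the only remaining feasible vertex,
\[
f_R+f_B+1=\tfrac{|G|-1}{2}=\tfrac{r+b+3}{2},\qquad\text{so}\qquad f_R+f_B=\tfrac{r+b+1}{2}.
\]
Consequently at least one of $f_R\geq\frac{r}{2}$ or $f_B\geq\frac{b+1}{2}$ holds, since otherwise the two strict inequalities would sum to less than $\frac{r+b+1}{2}$. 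I would then dispose of each case by contracting the \emph{other} region and invoking the minimality of $G$.

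In the case $f_R\geq\frac r2$ I would work in $G^\ast$ with distinguished pair $x,c$. Using Lemma~\ref{Lem:2connected2} together with the fact that each end-block of $G-\{x,x'\}$ other than $B$ has an inner-vertex adjacent to $x$ or $x'$ (else its cut-vertex would be a cut-vertex of $G$), one checks $G^\ast$ is $2$-connected. The $f_R$ feasible vertices of $V(G^\ast)\setminus\{x,x',c\}$ remain feasible in $G^\ast$, and together with $x'$ (feasible in $G$ by Claim~\ref{Claim:feasiblenumber}) they meet the threshold $\frac{|G^\ast|-1}{2}=\frac{r+2}{2}$. Since $|G^\ast|<|G|$, the minimality of $G$ yields an $(x,c)$-path of length at least $k$ in $G^\ast$; concatenating it with a $c$–$y$ path inside the block $B$ gives a feasible $(x,y)$-path of $G$, a contradiction. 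In the case $f_B\geq\frac{b+1}{2}$ I would instead work in $G_B:=G[V(B)\cup\{x,x'\}]$ with distinguished pair $x,y$: the $f_B$ feasible vertices of $V(B)\setminus\{c,y\}$ survive, the threshold $\frac{|G_B|-1}{2}=\frac{b+3}{2}$ is met (again using $x'$), and a long $(x,y)$-path delivered by minimality already lies inside $G_B\subseteq G$.

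The $2$-connectivity verifications and the path extension through $B$ are routine applications of Lemma~\ref{Lem:2connected2}. The delicate point — and the one I expect to be the real obstacle — is the \emph{exact} feasible count in the reduced graphs, which hinges on the single boundary vertex $x'$ (the only non-terminal vertex whose degree can drop): if its non-feasible neighbour $x''$ lies among the inner-vertices of $B$, then $x'$ loses degree in $G^\ast$ and the count in the first case becomes tight. To close this half-unit gap I would exploit the earlier structural information, namely that $x'$ has a non-feasible neighbour $x''$ and that all non-feasible vertices lie at distance $2$ from both $x$ and $y$ with clique neighbourhoods, to locate $x''$ and thereby decide on which side $x'$'s degree is lost, refining the dichotomy (or splitting into subcases) so that the rich side always has one vertex to spare.
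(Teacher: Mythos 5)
Your counting step contains an off-by-one error that destroys the dichotomy on which the whole argument rests. Writing $c=y'$ for the cut-vertex of $B$, both $c$ \emph{and} $x'$ are feasible ($x'$ by Claim~\ref{Claim:feasiblenumber}, since $x'\in N(x)\setminus\{y\}$), and both lie outside the two sets you count; hence $f_R+f_B+2=\frac{|G|-1}{2}$, i.e.\ $f_R+f_B=\frac{r+b-1}{2}$, not $\frac{r+b+1}{2}$. With the correct total, the values $f_R=\frac{r-1}{2}$ and $f_B=\frac{b}{2}$ are simultaneously admissible, so neither alternative of your dichotomy need hold and the case analysis never starts. Two further steps are also unjustified. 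First, neither $G^\ast$ nor $G_B$ is $2$-connected in general: if neither $x$ nor $x'$ has a neighbour in $V(B)\setminus\{y\}$ (which only forces $\{c,y\}$ to be a cut of $G$, perfectly consistent with $2$-connectivity of $G$), then $y$ is a cut-vertex of $G_B$; and $G^\ast$ can likewise acquire a cut-vertex, since after deleting the interior of $B$ new end-blocks appear whose inner vertices need not see $x$ or $x'$, so Lemma~\ref{Lem:2connected2} does not apply off the shelf. Second, you count $x'$ toward the degree threshold in both reduced graphs, but $x'$'s neighbourhood straddles the two sides, so $d_{G^\ast}(x')\geq k$ and $d_{G_B}(x')\geq k$ cannot both be assumed (and $c$ has the same defect in $G_B$). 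You correctly identify this boundary issue as ``the real obstacle'' but leave it to an unspecified refinement; that refinement is precisely the hard content of the claim.

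For comparison, the paper sidesteps all three difficulties by cutting at $\{x,x',y'\}$ rather than at $\{x,x'\}$: each component $H_i$ of $G-\{x,x',y'\}$ not containing $y$ is handled inside $G_i=G[S_i\cup\{x',y'\}]$ augmented by the edge $x'y'$, whose $2$-connectivity follows from that of $G-x$ via Lemma~\ref{Lem:2connected2}(iv); it only asks for paths of length $k-2$ in the pieces, recovering the two missing edges from $xx'$ at one end and a $(y',y)$-path inside $B$ at the other, so the degree loss at the boundary is absorbed (after further deleting up to one non-feasible vertex, legitimized by Lemma~\ref{Lem:2connected2}(v)); and its count (Facts~\ref{Claim:S_i'feasible} and~\ref{Claim:S'feasible}) yields at most $\frac{|G|-t}{2}$ feasible vertices, which does not immediately contradict Claim~\ref{Claim:feasiblenumber} but forces $t=1$, after which a genuinely new structural step (showing $B_1$ is a single block, locating the non-feasible $x''\in S_1$ with $x'x'',x'y\in E(G)$ via Claim~\ref{Claim:uxcommon}, and adding the edge $x''y$) is still required to finish. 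Your proposal has no analogue of that last step, and in its present form the claim is not proved.
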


\begin{proof}
Suppose $B$ is an end-block of $G-\{x,x'\}$. Let $y'$ be the
cut-vertex of $G-\{x,x'\}$ contained in $B$. Let
$\mathcal{H}=\{H_1,H_2,\ldots,H_t\}$ be the set of components of
$G-\{x,x',y'\}$ not containing $y$, $S'=V(B)\backslash\{y',y\}$,
and $S_i=V(H_i)$ for $i\in [1,t]$.
(Hence $V(G)=\{x,x',y',y\}\cup S'\cup\bigcup_{i=1}^tS_i$.) We remark
that possibly $S'=\emptyset$.

\begin{fact}\label{Claim:S_i'feasible}
$S_i$ contains at most $\frac{|S_i|-1}{2}$ feasible vertices for
$i\in [1,t]$.
\end{fact}

\begin{proof}
Suppose $S_i$ contains at least $\frac{|S_i|}{2}$ feasible
vertices. Let $G_i:=G[S_i\cup \{x',y'\}]$. If $x'y'\notin E(G)$,
then add the new edge $x'y'$ to $G_i$. Recall that $G-x$
is 2-connected. By Lemma \ref{Lem:2connected2} (iv),
$G_i$ is 2-connected.

Let $T_i\subseteq S_i$ be a set of 0, or 1 non-feasible
vertex, if $S_i$ contains 0 or at least 1 non-feasible vertex,
respectively. Let $G'_i=G_i-T_i$. By Lemma \ref{Lem:2connected2}
(v), $G'_i$ is 2-connected. Note
that every feasible vertex in $S_i$ has degree at least $k-2$ in
$G'_i$. If $T_i=\emptyset$, then every vertex in $S_i$ has degree at
least $k-2$ in $G'_i$; if $|T_i|=1$, then at least
$\frac{|S_i|}{2}=\frac{|G'_i|-1}{2}$ vertices in $S_i\backslash T_i$
has degree at least $k-2$ in $G'_i$. For any case, $G'_i$ has an
$(x',y')$-path, say $P$, of length at least $k-2$. Let $Q$ be a
$(y',y)$-path in $B$. Then $P'=xx'Py'Q$ is a feasible
$(x,y)$-path in $G$, a contradiction.
\end{proof}

\begin{fact}\label{Claim:S'feasible}
$S'$ contains at most $\frac{|S'|}{2}$ feasible vertices.
\end{fact}

\begin{proof}
If $S'=\emptyset$, then the assertion is trivial. Suppose now that
$S'\neq\emptyset$ and $S'$ contains at least $\frac{|S'|+1}{2}$
feasible vertices. Let $G'=G[S'\cup\{y',y\}]$. Note that every
feasible vertex in $S'$ has degree at least $k-2$ in $G'$. There are
at least $\frac{|S'|+1}{2}=\frac{|G'|-1}{2}$ such vertices. It follows that $G'$
has a $(y',y)$-path $P$ of length at least $k-2$. Let $Q$ be an
$(x',y')$-path with all internal vertices in $H_1$. Then
$P'=xx'Qy'P$ is a feasible $(x,y)$-path in $G$,
a contradiction.
\end{proof}

Now by the above two facts, $G$ has at most
$$2+\sum_{i=1}^t\frac{|S_i|-1}{2}+\frac{|S'|}{2}=\frac{|G|-t}{2}$$
feasible vertices, implying that $t=1$.

Therefore, $G-\{x,x'\}$ consists of $B$ and $B_1:=G[S_1\cup \{y'\}]$. Next,
we claim that $B_1$ is nonseparable. Suppose that $B'_1$ is an end-block
of $B_1$ not containing $y'$. If all inner vertices of $B'_1$ are feasible,
then by Erd\H{o}s-Gallai Theorem, there is an $(x',y')$-path $P$ of length
at least $k-1$ with all internal vertices in $S_1$. Thus, $G$ has a
feasible $(x,y)$-path, and so $B'_1$ has an inner vertex which is
non-feasible, say $v$. Recall Claim \ref{Claim:uxcommon}, $vx\notin E(G)$.
Thus, $N(y)\cap N(v)\subseteq \{y',x'\}$. By Claim \ref{Claim:uxcommon},
we have  $vy'\in E(G)$, a contradiction. We conclude that $G-\{x,x'\}$
consists of two blocks $B$ and $B_1$ with a common vertex $y'$.

By Fact \ref{Claim:S_i'feasible}, $S_1$ contains a non-feasible
vertex, say $x''$. By Claim \ref{Claim:uxcommon}, $x'x'',x'y\in
E(G)$. Let $G'$ be the graph obtained from $G-\{x,x'\}$ by adding
the edge $x''y$. Then $G'$ is 2-connected. Note that every feasible
vertex in $V(G')\backslash\{x'',y\}$ has degree at least $k-2$ in
$G'$. There are at least $\frac{|G|-1}{2}-1=\frac{|G'|-1}{2}$ such
vertices in $V(G')\backslash\{x'',y\}$. Hence $G'$ has an
$(x'',y)$-path $P$ of length at least $k-2$, and $xx'x''P$
is a feasible $(x,y)$-path in $G$, a contradiction. This
proves Claim \ref{Claim:BnotEnd}.
\end{proof}
By Claim \ref{Claim:BnotEnd}, $B$ is not an end-block of $G-\{x,x'\}$.
Let $Y=\{y_1,y_2,\ldots,y_t\}$ be the cut vertices of $G-\{x,x'\}$
contained in $B$. By Claim \ref{Claim:BnotEnd}, $|Y|\geq 2$. Let
$S_i$ be the set of the vertices of the components of
$G-\{x,x',y_i\}$ not containing $y$, and $G_i=G[S_i\cup\{x',y_i\}]$.
If $x'y_i\notin E(G)$, then we add the edge $x'y_i$ to $G_i$.
Thus, $G_i$ is 2-connected for $i\in[1,t]$.

For any $i\in [1,t]$, every feasible vertex in $S_i$ has degree at least $k-1$
in $G_i$. If every vertex in $S_i$ is feasible, then $G_i$ has an
$(x',y_i)$-path $P$ of length at least $k-1$. Let $Q$ be a
$(y_i,y)$-path in $B$. Then $P'=xx'Py_iQ$ is a
feasible $(x,y)$-path in $G$, a contradiction. Thus, every
$S_i$ contains a non-feasible vertex. By
Claim \ref{Claim:uxcommon}, $x'y\in E(G)$ and $yy_i\in E(G)$
for every $i\in[1,t]$.

We claim that there is an $(x,x')$-path of length at least 3 with
all internal vertices in $S_i\cup\{y_i\}$ for $i\in [1,t]$. If there are two
disjoint edges from $x$ and $x'$, respectively, to
$S_i\cup\{y_i\}$, then the assertion is trivial. Now assume that $v$ is
the only neighbor of $x$ and $x'$ in $S_i\cup\{y_i\}$. By Claim
\ref{Claim:feasiblenumber}, $v$ is feasible. Let $v'$ be a
non-feasible vertex contained in $S_i$. Then $v$ is the only
possible common neighbor of $x$ and $v'$, contradicting Claim
\ref{Claim:uxcommon}. Thus as we claimed, there is an $(x,x')$-path
of length at least 3 with all internal vertices in $S_i\cup\{y_i\}$,
for all $i\in[1,t]$.

\begin{claim}\label{Claim:Sineq2}
$|S_i|\neq 2$ for all $i=1,\ldots,t$.
\end{claim}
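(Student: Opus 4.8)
The plan is to argue by contradiction. Suppose $|S_i|=2$ for some $i$ and write $S_i=\{a,b\}$. Since every neighbor of a vertex of $S_i$ lies in $S_i\cup\{x,x',y_i\}$, both $a$ and $b$ have degree at most $4$, and as $k\geq 5$ by Claim \ref{Claim:kgeq5} they are non-feasible; by Claim \ref{Claim:nonfeaclique} their neighborhoods are cliques. Note also that $a,b\notin N(y)$, since they belong to a component of $G-\{x,x',y_i\}$ separated from $y$.

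First I would pin down how $\{a,b\}$ attaches to $\{x,x',y_i\}$. Applying Claim \ref{Claim:uxcommon} to the non-feasible vertex $a$: it has at least two common neighbors with $y$, and these can only lie in $\{x,x',y_i\}$ (the only other candidate, $b$, is not adjacent to $y$), so at least two of $x,x',y_i$ are adjacent to $a$, and likewise for $b$. Using in addition that $N(a),N(b)$ are cliques and that $a$ (resp. $b$) has two common neighbors with $x$ as well, the local picture reduces to a few configurations, the most constrained being the one in which both $a$ and $b$ are adjacent to both $x'$ and $y_i$ (whence $x'y_i\in E(G)$ by the clique property).

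The principal tool is a deletion reduction. By Lemma \ref{Lem:2connected2}(v), removing $a$ and then $b$ (each with a clique neighborhood) keeps the graph $2$-connected, and $|G-\{a,b\}|=|G|-2$. Since $a,b$ are non-feasible, the only feasible vertices whose degree can drop are $x'$ and $y_i$. If at least one of them still has degree $\geq k$ in $G-\{a,b\}$, then by Claim \ref{Claim:feasiblenumber} this graph still contains at least $\frac{|G|-1}{2}-1=\frac{|G-\{a,b\}|-1}{2}$ feasible vertices, so by the minimality of $G$ it has a feasible $(x,y)$-path, which is also a path in $G$, a contradiction.

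The hard part is the residual case in which deleting $\{a,b\}$ drops both $x'$ and $y_i$ below degree $k$; this forces both of $a,b$ to be adjacent to both $x'$ and $y_i$, with $x',y_i$ barely feasible, i.e.\ the extremal-type configuration. Here I would argue directly: since $t\geq 2$ there is another branch $S_j$ carrying an $(x,x')$-path, and $B$ is $2$-connected containing $y,y_i,y_j$, so I would splice the detour through $\{a,b\}$ between $x'$ and $y_i$ with a long path through $S_j$ and a $(y_i,y)$-path inside $B$ to assemble a feasible $(x,y)$-path. The obstacle is making this spliced path genuinely long enough: rather than bounding its length by hand, one should extract the long portion by applying the minimality of $G$ to a suitable $2$-connected subgraph, and then check that the three pieces (the $\{a,b\}$-detour, the $S_j$-path, and the $B$-path) are pairwise vertex-disjoint.
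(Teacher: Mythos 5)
Your proposal correctly sets up the local picture (both vertices of $S_i$ are non-feasible with clique neighborhoods, not adjacent to $y$, and in fact Claims \ref{Claim:feasiblenumber} and \ref{Claim:uxcommon} applied with $x$ force \emph{both} of them to be adjacent to both $x'$ and $y_i$, and force $xy_i\in E(G)$, since their common neighbors with $x$ can only be $x'$ and $y_i$). Your first reduction --- delete $\{a,b\}$, invoke Lemma \ref{Lem:2connected2}(v) and the count $\frac{|G|-1}{2}-1=\frac{|G-\{a,b\}|-1}{2}$ when at least one of $x',y_i$ stays feasible --- is sound. But the proof is not complete: the case you yourself label ``the hard part'' is exactly where the content of the claim lives, and you leave it as an unfulfilled plan. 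You say one should ``apply the minimality of $G$ to a suitable $2$-connected subgraph'' without identifying that subgraph, and the natural candidates fail: $G-\{x,x'\}$ is separable by Claim \ref{Claim:G-xx'separable}, and $B$ alone need not contain enough feasible vertices, so neither yields the length-$(k-2)$ segment your splicing scheme requires. Moreover, a route of the form ($S_j$-detour) $+$ ($\{a,b\}$-detour) $+$ ($(y_i,y)$-path in $B$) has bounded length contributions from the first two pieces, so the entire burden of reaching length $k$ falls on the unspecified long piece.

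The paper's resolution is a specific auxiliary graph that your sketch does not reach: take $G':=G-(\{x,x'\}\cup S_1)$ and add the edges $y_1v$ for every $v\in N(x')\cap\bigcup_{i\geq 2}S_i$. This $G'$ is $2$-connected by Lemma \ref{Lem:2connected2}(ii) (because $x'$ meets an inner-vertex of every end-block of $G-\{x,x'\}$), has $|G|-4$ vertices, and retains at least $\frac{|G|-1}{2}-2=\frac{|G'|-1}{2}$ vertices of degree at least $k-2$, so minimality of $k$ gives a $(y_1,y)$-path $P$ of length at least $k-2$. The point of the added edges is that the first edge $y_1v$ of $P$ can then be replaced in $G$ in either of two ways using the $(x',y_1)$-path $Q$ through $S_1$ and the edge $xy_1$: if $y_1v\in E(G)$ one takes $xx'Qy_1P$, and if $y_1v$ was an added edge (so $x'v\in E(G)$) one takes $xy_1Qx'vP[v,y]$. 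Without this construction --- or some equally explicit substitute that actually produces the long segment and tells you how to re-enter $G$ --- the argument has a genuine gap.
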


\begin{proof}
Without loss of generality, suppose that $|S_1|=2$. Each vertex
in $S_1$ has degree at most 4 in $G$, and so is non-feasible by
Claim \ref{Claim:kgeq5}. By Claims \ref{Claim:feasiblenumber}
and \ref{Claim:uxcommon},
$xy_1\in E(G)$. Recall that we have proved $yy_i\in E(G)$ for every
$i\in[1,t]$.

Let $G'$ be the graph obtained from $G-(\{x,x'\}\cup S_1)$, by
adding all edges $y_1v$ for any vertex $v\in N(x')\cap \bigcup_{i=2}^tS_i$.
Since $G-x$ is 2-connected and $G-\{x,x'\}$ is separable,
$x'$ is adjacent to an inner-vertex of each end-block of
$G-\{x,x'\}$. Thus, every end-block of $G-(\{x,x'\}\cup S_1)$
has an inner-vertex which is adjacent to $y_1$ in $G'$.
By Lemma \ref{Lem:2connected2} (ii), $G'$ is 2-connected.

Observe that every feasible vertex in $V(G')\backslash\{y_1,y\}$ has
degree at least $k-2$ in $G'$, and there are at least
$\frac{|G|-1}{2}-2=\frac{|G'|-1}{2}$ such vertices. It follows that
$G'$ has a $(y_1,y)$-path $P$ of length at least $k-2$. Let $y_1v$
be the first edge on $P$.

Let $Q$ be an $(x',y_1)$-path with all internal vertices in $S_1$.
If $v\in V(B)$, i.e., $y_1v$ is not an edge in $E(G')\backslash
E(G)$, then $P'=xx'Qy_1P$ is a feasible $(x,y)$-path
in $G$, a contradiction. If $v\in S_i$ for some $i\in [2,t]$, i.e.,
the edge $y_1v$ is in $E(G')\backslash E(G)$, then $x'v\in E(G)$
and $P'=xy_1Qx'vP[v,y]$ is a feasible $(x,y)$-path in
$G$, also a contradiction. This proves Claim \ref{Claim:Sineq2}.
\end{proof}

Note that if $|S_i|=1$, then the vertex in $S_i$ has degree at most
3, an so it is non-feasible. To continue the proof, we need to analyze
the case of $S_i\geq 3$.

\begin{claim}\label{Claim:Sifeasible}
If $|S_i|\geq 3$, then $S_i$ contains at most $\frac{|S_i|-3}{2}$
feasible vertices.
\end{claim}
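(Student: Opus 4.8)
The plan is to argue by contradiction, mirroring the strategy of Fact~\ref{Claim:S_i'feasible} but exploiting the fact that $B$ is \emph{not} an end-block (so $t\geq 2$) in order to gain extra length. Assume $|S_i|\geq 3$ and that $S_i$ contains at least $\frac{|S_i|-2}{2}$ feasible vertices; I will produce a feasible $(x,y)$-path, contradicting the choice of $G$. Write $f_i$ for the number of feasible vertices of $S_i$ and $g_i:=|S_i|-f_i$ for the number of non-feasible ones. If $f_i=0$ the inequality $0\leq\frac{|S_i|-3}{2}$ holds trivially, so I may assume $f_i\geq 1$.

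First I would assemble the two ``outer'' pieces of the path. Since $t\geq 2$, fix some $j\neq i$ and take the $(x,x')$-path $R$ of length at least $3$ with all internal vertices in $S_j\cup\{y_j\}$ guaranteed above. Because $B$ contains $y$ together with the distinct cut-vertices $y_1,\ldots,y_t$ (and $y\notin Y$), we have $|B|\geq t+1\geq 3$, so $B$ is $2$-connected; hence $B-y_j$ is connected and contains a $(y_i,y)$-path $Q$ of length at least $1$ avoiding $y_j$. As $S_i,S_j$ are disjoint, $y_i\neq y_j$, and $B\cap(S_i\cup S_j)=\emptyset$, the pieces $R$, $Q$, and any $(x',y_i)$-path $P$ inside $G_i$ are pairwise internally disjoint, so the concatenation $Rx'Py_iQ$ is a genuine $(x,y)$-path of $G$.

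The core step is to manufacture the middle piece $P$. In $G_i=G[S_i\cup\{x',y_i\}]$ every feasible vertex of $S_i$ has degree at least $k-1$, since its only possible neighbour outside $S_i\cup\{x',y_i\}$ is $x$. I would delete a set $T_i$ of $\min\{3,g_i\}$ non-feasible vertices; because the neighbourhood of each non-feasible vertex is a clique (Claim~\ref{Claim:nonfeaclique}), repeated application of Lemma~\ref{Lem:2connected2}(v) keeps $G_i':=G_i-T_i$ $2$-connected. In $G_i'$ the $f_i$ feasible vertices have degree at least $k-1-|T_i|\geq k-4$, and a short computation splitting on whether $g_i\geq 3$ or $g_i\leq 2$ shows $f_i\geq\frac{|G_i'|-1}{2}=\frac{|S_i|-|T_i|+1}{2}$ under the standing assumption (when $g_i\geq 3$ this is exactly $f_i\geq\frac{|S_i|-2}{2}$; when $g_i\leq 2$ it reduces to $f_i\geq\frac{f_i+1}{2}$, i.e.\ $f_i\geq 1$). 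Since $k-1-|T_i|<k$, the minimality of $k$ lets me apply Theorem~\ref{Thm:strengthening-ErdosGallai} to $G_i'$ to extract an $(x',y_i)$-path $P$ of length at least $k-1-|T_i|$. Then $Rx'Py_iQ$ has length at least $3+(k-1-|T_i|)+1\geq k$ (as $|T_i|\leq 3$), the desired contradiction, whence $f_i\leq\frac{|S_i|-3}{2}$.

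I expect the main obstacle to be the counting bookkeeping rather than the topology: one must choose the number of deleted non-feasible vertices so that the density threshold $\frac{|G_i'|-1}{2}$ is still met \emph{and} the surviving path stays long enough, and this is what forces the somewhat unusual bound $\frac{|S_i|-3}{2}$ (the improvement of $2$ over Fact~\ref{Claim:S_i'feasible} coming precisely from routing through the length-$3$ detour $R$ instead of the edge $xx'$). A secondary point to verify is that $P$ is not merely an artificially added edge $x'y_i$; this can only occur when $k-1-|T_i|=1$, and is handled using $k\geq 5$ together with the fact that $2$-connectedness of $G_i'$ forces $x'$ and $y_i$ to have genuine neighbours in $S_i$, so a real path of length at least $2$ is available.
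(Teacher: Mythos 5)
Your proof is correct and follows essentially the same route as the paper's: delete a set $T_i$ of up to three non-feasible vertices from $G_i$, invoke Theorem \ref{Thm:strengthening-ErdosGallai} (via the minimal-counterexample choice) to get an $(x',y_i)$-path of length at least $k-4$ in $G_i-T_i$, and splice it between the length-$3$ $(x,x')$-detour through $S_j\cup\{y_j\}$ and the edge $y_iy$. The only differences are cosmetic --- the paper uses the edge $y_iy$ directly rather than a path in $B-y_j$, and your extra check that $P$ is not the artificially added edge $x'y_i$ covers a detail the paper leaves implicit.
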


\begin{proof}
Suppose that $S_i$ contains at least $\frac{|S_i|-2}{2}$ feasible
vertices. Let $T_i\subseteq S_i$ be a set of 1, 2 or 3 non-feasible
vertices, if $S_i$ contains 1, 2 or at least 3 non-feasible
vertices, respectively. Let $G'_i=G_i-T_i$. We note that every feasible
vertex in $S_i$ has degree at least $k-4$ in $G'_i$. If $|T_i|\leq
2$, then all vertices in $S_i\backslash T_i$ has degree at least
$k-4$; if $|T_i|=3$, then at least
$\frac{|S_i|-2}{2}=\frac{|G'_i|-1}{2}$ vertices in $S_i\backslash
T_i$ has degree at least $k-4$. For any case, $G'_i$ has
at least 3 vertices and is 2-connected. Thus, it has an
$(x',y_i)$-path $P$ of length at least $k-4$.

Let $j\in [1,t]$ with $j\neq i$. Recall that there is an
$(x,x')$-path, say $Q$, of length at least 3 with all
internal vertices in $S_j\cup\{y_j\}$ (see the analysis
before Claim \ref{Claim:Sineq2}). Since $y_iy\in E(G)$,
$P'=Qx'Py_iy$ is a feasible $(x,y)$-path
in $G$, a contradiction.
\end{proof}

Finally we are completing the proof using the above two claims.
Let $G'$ be the graph obtained from
$G-(\{x,x'\}\cup\bigcup_{i=1}^tS_i)$ by adding a new vertex $x''$,
an edge $x''y$ and $t$ edges $x''y_i$, $i\in [1,t]$. Each feasible
vertex in $V(G)\backslash(\{x,x',y\}\cup\{y_i: |S_i|\geq
3\}\cup\bigcup_{i=1}^tS_i)$ has degree at least $k-2$ in $G'$. By
Claims \ref{Claim:Sineq2} and \ref{Claim:Sifeasible}, $G'$ has at
least
$$\frac{|G|-1}{2}-\sum_{|S_i|\geq
3}\left(\frac{|S_i|-3}{2}+1\right)-1\geq\frac{|G|-1}{2}-\sum_{i=1}^t\frac{|S_i|-1}{2}-1\geq\frac{|G'|-1}{2}$$
vertices in $V(G')\backslash \{x'',y\}$ of degree at least
$k-2$ in $G'$. Thus $G'$ has an $(x'',y)$-path $P$ of length
at least $k-2$. We note that $x''y_i\in E(P)$ for some $i\in [1,t]$.
Set $Q$ be an $(x',y_i)$-path of length at least
2 in $G_i$. Then $P'=xx'Qy_iP[y_i,y]$
is a feasible $(x,y)$-path in $G$. The proof of
Theorem \ref{Thm:strengthening-ErdosGallai} is complete.
\hfill$\Box$

\section{Concluding remarks}\label{Sec:Remarks}
In this paper, we focus on a conjecture of Woodall on
cycles and improvements of Erd\H{o}s-Gallai Theorem
on paths. We also use these as tools to give short proofs
of known theorems and make progress on a conjecture of Bermond.
In what follows, we discuss related problems,
some of which shall motivate our future research.

In 1985, H\"{a}ggkvist and Jackson \cite{HJ85} suggested
a strengthening of Woodall's conjecture.
\begin{conj}[H\"{a}ggkvist, Jackson \cite{HJ85}]\label{Conj:HJ}
Let $G$ be a 2-connected graph on $n$ vertices. If $G$ contains
at least $\max\{2k-1,\frac{n+k}{2}+1\}$
vertices of degree at least $k$,
then $G$ has a cycle of length at least $\min\{n,2k\}$.
\end{conj}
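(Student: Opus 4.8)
The plan is to argue by contradiction along the skeleton of the proof of Theorem~\ref{Thm:WoodallConj}, but to replace its single global counting step with a genuinely sharper one, since that is where the additional strength must come from. First I would dispose of the regime $n<2k$, where the relevant threshold is $2k-1$: having at least $2k-1$ feasible vertices forces $n\ge 2k-1$, so together with $n<2k$ this gives $n=2k-1$, and then every vertex has degree at least $k=\frac{n+1}{2}\ge \frac n2$. Dirac's Theorem~\ref{Thm:Dirac-52} (with $2k=n+1>n$) then yields $c(G)\ge \min\{n,2k\}=n$, a Hamilton cycle. Thus all the difficulty lies in the range $n\ge 2k$, where the target is $c(G)\ge 2k$.

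So assume $n\ge 2k$ and suppose for contradiction that a longest cycle $C$ has length $\ell:=|C|\le 2k-1$; then $G$ is nonhamiltonian and $G-C\ne\emptyset$. Write $f$ for the number of feasible vertices (degree at least $k$), so $f\ge\max\{2k-1,\frac{n+k}{2}+1\}$. Exactly as in the proof of Theorem~\ref{Thm:WoodallConj}, for each component $H_i$ of $G-C$ the Woodall-type Fan Lemma (Theorem~\ref{Thm:Woodall-FanLemma}) together with Lemma~\ref{Lem:wellknown} shows that $H_i$ carries at most $\frac{|H_i|}{2}$ feasible vertices: otherwise there is an $(H_i,C)$-fan with at least $k$ edges and hence $c(G)\ge 2k>\ell$. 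Summing, the number $f_{\mathrm{off}}$ of feasible vertices off $C$ satisfies $f_{\mathrm{off}}\le\frac{n-\ell}{2}$, whence $f\le \ell+\frac{n-\ell}{2}=\frac{n+\ell}{2}$.

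Here is the obstacle. Because $\ell$ may be as large as $2k-1$, the estimate $f\le\frac{n+\ell}{2}\le\frac{n+2k-1}{2}$ is consistent with $f\ge\frac{n+k}{2}+1$, so the crude count that settles Woodall's conjecture does not close the gap (a lower bound $\ell\ge k+2$ in the main subrange $n\ge 3k-4$ already follows from it). The real content must be a bound on $f_C$, the number of feasible vertices lying on $C$ itself: I would aim to show that a longest cycle of length $\ell\le 2k-1$ inside a nonhamiltonian $2$-connected graph cannot be nearly saturated with high-degree vertices, quantitatively $f_C\le\frac{k+\ell}{2}+O(1)$, since combined with $f_{\mathrm{off}}\le\frac{n-\ell}{2}$ this gives $f\le\frac{n+k}{2}+O(1)$, contradicting the hypothesis. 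The mechanism I expect to use is a crossover/rotation argument: a feasible $c_i\in C$ has at least $k\ge\frac{\ell+1}{2}$ neighbours, which are chords of $C$ or edges into components, and if too many cycle vertices are feasible their neighbourhoods force crossing chords or a splice through a component that yields a cycle longer than $C$. The Kelmans operation of Lemma~\ref{Lemma:Kelmans}, which preserves long $(x,y)$-paths while raising the degree sequence, could be used to normalise these neighbourhoods before the splice.

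The step I expect to be genuinely hard---and the reason the conjecture is still stated as open in this excerpt---is making $f_C\le\frac{k+\ell}{2}+O(1)$ hold \emph{uniformly} across the whole window $k+2\le\ell\le 2k-1$, rather than only near one endpoint. A promising route is to strengthen Lemma~\ref{Lem:wellknown} into a combined statement that exploits feasible vertices on $C$ together with those in several components at once: instead of extracting a single $(H,C)$-fan, build a longer cycle from two fans attached at well-separated arcs of $C$, or from one fan together with a chord between two feasible cycle vertices, thereby converting a surplus of feasible vertices located \emph{anywhere} in $G$ into a cycle of length at least $2k$. Controlling the interplay between the arc lengths of $C$ and the fan/chord detours, and treating the parity and boundary cases near $\ell=2k-1$, is where the bulk of the work would lie.
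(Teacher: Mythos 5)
The statement you are trying to prove is not a theorem of this paper: it is H\"aggkvist and Jackson's conjecture, which the paper records as open (the authors only remark that their Conjecture~\ref{Conj:LN}, if true, would give partial progress on it). So there is no proof in the paper to compare yours against, and your attempt, as written, is not a proof either. Your reductions up to the point where you stop are sound: the regime $n<2k$ does collapse to $n=2k-1$ with minimum degree $\ge\frac{n+1}{2}$ and is handled by Dirac, and your rerun of the Woodall counting via Theorem~\ref{Thm:Woodall-FanLemma} and Lemma~\ref{Lem:wellknown} correctly yields $f\le\frac{n+\ell}{2}$ and hence only $\ell\ge k+2$. But the entire content of the conjecture is then concentrated in the claim $f_C\le\frac{k+\ell}{2}+O(1)$, which you state as a goal and support only with a gesture toward rotation/crossover arguments. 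That is the whole problem, not a step of it; nothing in the paper's toolkit (the fan lemma, the Kelmans operation, Lemma~\ref{Lem:wellknown}) bounds the number of high-degree vertices \emph{on} a longest cycle, and the Kelmans operation in particular is set up to preserve long $(x,y)$-paths in a fixed graph, not to ``normalise'' chord structure on a longest cycle of a minimal counterexample to a circumference statement.

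There is also a quantitative reason the route as you formulated it cannot close: the paper's Section~\ref{Sec:Remarks} exhibits graphs $G_1=K_2\vee(K_{2k-4}+\overline{K_t})$ with $2k-2$ feasible vertices and $c(G_1)=2k-1$, and $G_2$ with $\frac{n+k+1}{2}$ feasible vertices and $c(G_2)=2k-1$. Thus the conjectured thresholds $2k-1$ and $\frac{n+k}{2}+1$ are sharp to within an additive $1$, so any counting argument that concedes an unspecified $O(1)$ in the bound on $f_C$ (and hence on $f$) is structurally incapable of reaching the hypothesis $f\ge\frac{n+k}{2}+1$; the cycle-side count would have to be exact, with the extremal configurations $G_1$ and $G_2$ characterised along the way. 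Identifying where the difficulty lies is a reasonable diagnosis, but the proposal does not prove the statement.
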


H\"{a}ggkvist and Jackson \cite{HJ85} constructed the following two
classes of graphs. Let $G_1:=K_2\vee (K_{2k-4}+\overline{K_t})$.
Let $H_1:=K_{\frac{k-1}{2}}\vee \overline{K_{\frac{k-1}{2}}}$
where $k\geq 5$ is odd and $H_2=K_{k+1}$. Let $G_2$ be the
graph obtained from one copy of $H_2$ and several disjoint
copies of $H_1$ by joining each vertex in the
$K_{\frac{k-1}{2}}$ subgraph of $H_1$ to two fixed vertices of
$H_2$. One can see $G_1$ has $2k-2$ vertices of degree at least $k$
and $c(G_1)=2k-1$; $G_2$ has $\frac{n+k+1}{2}$ vertices
of degree at least $k$ and $c(G_2)=2k-1$. Thus,
Conjecture \ref{Conj:HJ}, if true, will be sharp by
these examples.

In 2013, Li \cite[Conjecture~4.14]{L13} conjectured
that for any 2-connected graph $G$ of order $n$, there is
a cycle of length at least $2k$ if the number of vertices
of degree at least $k$ is at least $\frac{n+k}{2}$. The
constructions $G_1$ and $G_2$ mentioned above disprove
Li's conjecture.

In closing, we suggest the following conjecture,
which is a generalization of Theorem \ref{Thm:strengthening-ErdosGallai}
(set $\alpha=\frac{1}{2}$).
\begin{conj}\label{Conj:LN}
Let $G$ be a 2-connected graph on $n$ vertices and
$x,y\in V(G)$. Let $0<\alpha\leq \frac{1}{2}$.
If $G-\{x,y\}$ contains more than $\alpha(n-2)$
vertices of degree at least $k$, then $G$ contains an
$(x,y)$-path of length at least $2\alpha k$.
\end{conj}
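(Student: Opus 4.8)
The plan is to attack Conjecture \ref{Conj:LN} by the same extremal/minimal-counterexample machinery that proves Theorem \ref{Thm:strengthening-ErdosGallai}, now carrying the weight $\alpha$ and the (generally non-integral) target length $2\alpha k$ through the whole argument. Fixing $\alpha$, I would induct on $k$ and, among all counterexamples, choose $G$ minimizing $|G|$, then $|E(G)\setminus\{xy\}|$, then maximizing the degree sequence $\tau(G)$, exactly as in Section \ref{Sec:MainThm}; maximality of $\tau(G)$ again forces $xy\in E(G)$. Call a vertex \emph{feasible} if its degree is at least $k$. The cut-reduction (the analogue of Claim \ref{Claim:2-cut}) is the cleanest part and generalizes verbatim: if $\{x,y\}$ separates $G$ into pieces $G_i=G[V(H_i)\cup\{x,y\}]$, then any $G_i$ carrying more than $\alpha(|V(G_i)|-2)=\alpha|V(H_i)|$ feasible vertices already yields an $(x,y)$-path of length at least $2\alpha k$ by minimality, so each $H_i$ contributes at most $\alpha|V(H_i)|$ feasible vertices, and summation gives at most $\alpha(n-2)$ in all, contradicting the hypothesis. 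The key point is that the $\alpha$-weighted count is additive over components, so this step is insensitive to the value of $\alpha$.

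Next I would port the purely degree-sequence lemmas. The Kelmans-operation lemma (Lemma \ref{Lemma:Kelmans}) and the connectivity toolkit (Lemma \ref{Lem:2connected2}) make no reference to the counting threshold, so the arguments showing that the neighbourhood of every non-feasible vertex is a clique (Claim \ref{Claim:nonfeaclique}) and that $G-x$ and $G-y$ are $2$-connected (Claim \ref{Claim:G-x2con}) should survive with only cosmetic changes: each relocates edges or deletes a non-feasible vertex, and in each case one checks that the feasible vertices that remain, measured against the reduced order, still exceed the corresponding $\alpha$-threshold. The serious work is the block decomposition and the closing count (the analogues of Claims \ref{Claim:feasiblenumber}--\ref{Claim:Sifeasible}), where one repeatedly deletes a bounded set $T$ of non-feasible vertices from a subgraph, lowers the degree threshold from $k$ to $k-|T|$, and must verify that the surviving feasible vertices still beat the $\alpha$-threshold for the reduced order. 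In the $\alpha=\frac{1}{2}$ case this balances exactly, because deleting $|T|$ vertices lowers the required count by $|T|/2$ while the deleted segment, re-attached to the end of the recovered path, restores $2\alpha|T|=|T|$ edges—an integer.

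Exactly this integrality is where I expect the real obstacle. For general $\alpha$ the induction on $k$ lowers the target from $2\alpha k$ to $2\alpha(k-1)=2\alpha k-2\alpha$, but a path is lengthened only by whole edges, so the $2\alpha$ saved at each step is fractional and the bookkeeping no longer closes on the nose; the sharp gadget $K_a\vee\overline{K_b}$ with $a/(a+b)=\alpha$ shows that the true per-vertex contribution to path length is precisely $2\alpha$, which is the intrinsic source of the non-integrality. To cope with this I would replace the bare vertex-count by a discharging potential, assigning each feasible vertex weight $1-\alpha$ and each non-feasible vertex weight $-\alpha$, so that the hypothesis becomes exactly ``total weight is positive'' (indeed $f(1-\alpha)-(n-2-f)\alpha>0$ iff $f>\alpha(n-2)$), each deletion step can then be audited against the length recovered at rate $2\alpha$, and one tracks $\lceil 2\alpha k\rceil$ rather than $2\alpha k$ through the ceilings. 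A secondary difficulty is that the extremal configurations are no longer a single family: the clique/independent ratio $a:b$ is free, so the block analysis must allow a whole interval of admissible local densities instead of the rigid $\frac{1}{2}$-split. Reformulating the conclusion in scale-free form as ``an $(x,y)$-path of length at least $\min\{k,\,2km/(n-2)\}$'', with $m$ the number of feasible vertices, may make the weighted induction more transparent, but closing this fractional gap is, in my judgement, the crux on which a complete proof stands or falls.
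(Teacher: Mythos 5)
This statement is posed in the paper as an open conjecture (Section \ref{Sec:Remarks}); the paper supplies no proof of it, only the sharpness example $K_{\alpha(k-1)}\vee\overline{K_{(1-\alpha)(k-1)}}$ and the remark that $\alpha=\frac12$ recovers Theorem \ref{Thm:strengthening-ErdosGallai}. Your submission is likewise not a proof: it is a plan, and you candidly concede at the end that ``closing this fractional gap is \ldots the crux on which a complete proof stands or falls.'' So the honest verdict is that the statement remains unproved on both sides, and your text cannot be accepted as a proof of the conjecture.

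To make the gap concrete: the steps you describe as surviving ``with only cosmetic changes'' do not in fact survive for $\alpha<\frac12$. The engine of the paper's proof of Theorem \ref{Thm:strengthening-ErdosGallai} is a repeated trade: delete one or two vertices (often including a \emph{feasible} one, e.g.\ $x'$ in Claim \ref{Claim:G-xx'separable}), lower the degree threshold by an integer, and check that the count $\frac{|G|-1}{2}-1=\frac{|G'|-1}{2}$ still clears the hypothesis of the smaller instance. Losing one feasible vertex costs $1$ from the count, while shrinking the order by two relaxes the threshold by $2\alpha$; these balance exactly when $\alpha=\frac12$ and fail strictly when $\alpha<\frac12$ (one needs $\alpha(n-2)-1\geq\alpha(n-4)$, i.e.\ $\alpha\geq\frac12$). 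Dually, the recovered path length $2\alpha(k-c)+c=2\alpha k+c(1-2\alpha)$ has slack for $\alpha<\frac12$, so the correct bookkeeping must shuttle that slack back into the vertex count --- which is precisely what your proposed $(1-\alpha)/(-\alpha)$ discharging potential is meant to do, but you never carry out a single claim under that accounting, nor do you address how the block decomposition handles end-blocks whose local clique/independent ratio differs from $\alpha$. Your analogue of Claim \ref{Claim:2-cut} is the only step that genuinely goes through unchanged. As it stands the submission is a reasonable research programme for attacking Conjecture \ref{Conj:LN}, but every claim after the cut-reduction is unproved.
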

For any rational number $\alpha$, we choose $k$ such that
$\alpha(k-1)$ is an integer at least 2. Let
$H=K_{\alpha(k-1)}\vee \overline{K_{(1-\alpha)(k-1)}}$.
Let $G$ be obtained from $t$ copies of $H$, by adding two
new vertices $\{x,y\}$ and all possible edges between $\{x,y\}$ and
the $K_{\alpha(k-1)}$ subgraph of each $H$. The number of vertices of
degree at least $k$ in $G$ is $\alpha\cdot (|G|-2)$ and a longest
$(x,y)$-path is of length $2\alpha(k-1)$. This example
shows that Conjecture \ref{Conj:LN} is sharp for infinite
values of integers $n$ and $k$.

If Conjecture \ref{Conj:LN} is true, we can make partial
progress on Conjecture \ref{Conj:HJ}.

\bigskip

\noindent {\bf Acknowledgement.} The authors are very grateful
to Douglas Woodall for sending a copy of \cite{W75}
to them. They are also very grateful to Xing Peng
for many helpful comments.

\end{document}